\numberwithin{equation}{section}
\theoremstyle{plain}
\newtheorem{thm}{Theorem}[section]
\newtheorem{lem}[thm]{Lemma}
\newtheorem{prop}[thm]{Proposition}
\newtheorem{cor}[thm]{Corollary}
\theoremstyle{definition}
\theoremstyle{remark}
\newtheorem{rem}{Remark}[thm]
\newcommand{\R}{\mathbb{R}}
\newcommand{\be}{\begin{equation}}
\newcommand{\ee}{\end{equation}}
\newcommand{\bp}{\begin{proof}}
\newcommand{\ep}{\end{proof}}
\newcommand{\bel}{\begin{equation}\label}
\newcommand{\eeq}{\end{equation}}
\newcommand{\II}{\mathcal{I}_S(t)}
\newcommand{\JJ}{\mathcal{I}_{AN}(t)}
\newcommand{\Hs}{\mathcal{H}_{S}(t)}
\newcommand{\Han}{\mathcal{H}_{AN}(t)}
\newcommand{\Ks}{\mathcal{K}_{S}(t)}
\newcommand{\Ps}{\mathcal{P}_{S}(t)}
\newcommand{\Ran}{\mathcal{R}_{AN}(t)}
\newcommand{\Man}{\mathcal{M}_{AN}(t)}
\begin{document}
	\title[Decay for Skyrme wave maps]{Decay for Skyrme wave maps}

\author[Miguel A. Alejo]{Miguel A. Alejo}
\address{Departamento de Matem\'aticas. Universidad de C\'ordoba\\
C\'ordoba, Spain.}
\email{malejo@uco.es}

	\author[Christopher Maul\'en]{Christopher Maul\'en} 
	\address{Departamento de Ingenier\'{\i}a Matem\'atica and Centro
de Modelamiento Matem\'atico (UMI 2807 CNRS), Universidad de Chile, Casilla
170 Correo 3, Santiago, Chile.}
	\email{cmaulen@dim.uchile.cl}
	\thanks{Ch. Maul\'en was partially funded by Chilean research grants FONDECYT 1150202 
	and CONICYT PFCHA/DOCTORADO NACIONAL/2016-21160593. Part of this work was done while the second author was 
	visiting the IMUS (Instituto de Matem\'aticas de la Universidad de Sevilla), Spain.}

 \keywords{Skyrme, Adkins-Nappi, decay, virial}

	\begin{abstract}
	We consider the decay problem for global solutions of the Skyrme and Adkins-Nappi equations. We prove that the energy associated to any bounded energy solution of the Skyrme (or Adkins-Nappi) equation decays to zero outside the light cone (in the radial coordinate). Furthermore, we prove that suitable polynomial weighted energies of any small solution decays to zero when these energies are bounded. The proof consists of finding three new virial type estimates, one for the exterior of the light cone, based on the energy of the solution, and a more subtle virial identity for the weighted energies, based on a modification of momentum type quantities.
	\end{abstract}
	\maketitle
\tableofcontents

	\section{Introduction}
This work is concerned with decay properties of global solutions of two \emph{nonlinear} quantum field models, also known in the literature as \emph{Skyrme and Adkins-Nappi equations}. 
Physically these models intend to describe interactions between nucleons and $\pi$ mesons. 
Classical nonlinear field theories played an important role in the description of particles as solitonic objects. A well known example of these 
nonlinear theories is the $SU(2)$ sigma model \cite{Gell}, obtained as a formal critical point from the action

\be\label{su2}
\mathcal{S}(\psi)=\int_{\R^{1,d}}\eta^{\mu\nu}(\psi^*g)_{\mu\nu}=\int_{\R^{1,d}}\eta^{\mu\nu}\partial_\mu \psi^A\partial_\nu \psi^B g_{AB} \circ \psi.
\ee

\noindent
Here $\psi$ is a map from a $(1+d)$-dimensional Minkowski space $(\R^{1,d},\eta)$ to a Riemannian
manifold $(M, g)$ with metric $g$. From a geometrical point of view, the associated Lagrangian is the trace of the
pull-back of the metric $g$ under the map $\psi$. A current choice is $M = \mathbb{S}^d$ with $g$ the associated metric 
and for $d = 3$, one obtains the classical $SU(2)$ sigma model. The Euler-Lagrange equation corresponding to
the action $\mathcal{S}$ is called the wave maps equation. Unfortunately, the $SU(2)$ sigma model
does not admit solitons and it develops singularities in finite time \cite{Bizon, Donninger, Satah}. To avoid these inconveniences and 
to prevent the possible breakdown of the system in finite time, Skyrme \cite{Skyrme} 
modified the associated Lagrangian to \eqref{su2} by adding higher-order terms
which break the scaling invariance of the initial model, which in spherical coordinates 
$(t, r, \theta,\varphi)$ on $\R^{1,3}$, and  co-rotational maps $\psi(t, r, \theta,\varphi)=(u(t,r), \theta,\varphi)$, 
the Skyrme model leads to a scalar quasilinear wave equation satisfied by the angular variable $u$, as it  will be shown in \eqref{skyrme}.

This equation has a \emph{unique} static solution with  boundary values $u(0)=0$ and $\lim_{r\rightarrow\infty}u(r)=\pi,$ and which is 
currently known as \emph{Skyrmion} \cite{Macc}. This existence was proved in \cite{Lad} and \cite{Macc} by using  variational methods and ODE techniques, 
respectively. As far as we know, the Skyrmion is not known in a closed form. 
 Recently, Lawrie and Rodriguez \cite{LR2019} established the existence, uniqueness, and asymptotic stability of topologically nontrivial stationary solutions for the Adkins-Nappi equation. Furthermore, they showed the stable soliton resolution for this equation, conditional on a certain non-conserved norm remaining bounded throughout the evolution. 

For the Adkins-Nappi equation, Geba and Rajeev  \cite{GR2010a} proved that 
solutions remain continuous at the origin and tend to zero as $(t,r)$ goes to zero. Furthermore, in  \cite{GR2010b}, 
they proved that the energy associated to equivariant solutions does not concentrate. 
Finally, Lawrie \cite{L2015} studied the large data dynamics by proving that there is no type II blow-up in the class of maps 
with topological degree zero. In particular, any degree zero map whose critical norm stays bounded must be global-in-time and scatter to
zero as t tends to infinity. 

For the Skyrme model, Geba and Da Silva  \cite{GDS2013} proved that the energy does not concentrate in the 2+1 
dimensional equivariant model. Recently, the large data global regularity for the equivariant case was studied in \cite{GG2017}, 
proving that this is valid for initial data in $H^s \times H^{s-1}(\R^3)$ with $s > 7/2$. Recently, for the  (3+1)-dimensional case, Li \cite{Li} proved the unconditional global-well possedness in $H_{rad}^4\times H_{rad}^{3}(\R^5)$, 
introducing a new method to set global wellposedness for arbitrarily large initial data.


After that, Geba, Nakanishi, and Rajeev \cite{2011GNR} proved global existence and scattering for small data in critical 
homogeneous Sobolev-Besov space (i.e. $\dot{B}^s_{p,q}$) for the Skyrme and Adkins-Nappi equations. 
In particular, considering the change of variable $u=rv$, 
they showed that the equation obtained to $v$ from Skyrme  is globally well posed in 
$C(\R; \dot{B}_{2,1}^{3/2}\cap L^2(\R^{5}))\cap L^2(\R; \dot{B}_{4,1}^{3/4}\cap \dot{B}_{4,2}^{-3/4}(\R^5))$. 
Analogously,  they proved that the equation obtained to $v$ from Adkins-Nappi model is globally well posed
in $C(\R; \dot{H}^{1}\cap L^2(\R^{5}))\cap L^2(\R; \dot{B}_{4,2}^{1/4}\cap \dot{B}_{4,2}^{-3/4}(\R^5))$. See \cite{2011GNR} 
for further details.

\newpage

	\subsection{Main results} In this paper, we are interested in the long time asymptotics of two relevant mathematical physics models. 
	Firstly, the Skyrme model is written as 
	\medskip
\begin{equation}
	\left( 1+\dfrac{2 \alpha^2 \sin^2(u)}{r^2} \right)(u_{tt}-u_{rr})-
	\dfrac{2}{r}u_r + \dfrac{ \sin(2u)}{r^2} \left[1+\alpha^2\left(u^2_t -u_r^2 +\dfrac{\sin^2(u)}{r^2} \right) \right]=0, \label{skyrme}
\end{equation}

\medskip
\noindent
where $\alpha$ is a positive constant having the dimension of length and which will not have any key role in our results. 
The second model is a short of generalization of supercritical wave maps as it was presented in \cite{AN}. This is a simplified version of 
the Skyrme model \eqref{skyrme} and it is currently known as Adkins-Nappi model\medskip
\begin{equation}
u_{tt}-u_{rr}-\dfrac{2}{r}u_r+\dfrac{ \sin(2u)}{r^2}+
\dfrac{\left( u- \sin (u) \cos (u)\right) \left(1-\cos (2u) \right)}{r^4}=0.\label{AdkinsNappi}
\end{equation}

\medskip
\noindent
These two models have the  following low order conserved quantities (subindices "S" and "AN" for Skyrme and Adkins-Nappi models, respectively)

\begin{align}
	E_{S}[u](t)&=\int_{0}^{\infty} r^2 \left[ \left(1+\dfrac{2 \alpha^2 \sin^2(u)}{r^2} \right) (u_t^2+u^2_r)+ 2\dfrac{\sin^2(u)}{r^2}
	+\dfrac{\alpha^2 \sin^4(u)}{r^4} \right], \label{energy_S}\\
	E_{AN}[u](t)&=\int_{0}^{\infty} r^2 \left[ u_t^2+u^2_r+ 2\dfrac{\sin^2(u)}{r^2}+\dfrac{\left(u- \sin (u) \cos (u) \right)^2 }{r^4} \right]
	\label{energy_AN}.
\end{align}

\medskip
(Here $\int_0^{\infty}$ means $\int_{0}^{\infty}dr$.) 
The energies $E_S$ and $E_{AN}$ are well-defined in the homogeneous Sobolev 
spaces $\dot{H}^{7/4} \cap\dot{H}^1(\R^3)$ and  $\dot{H}^{5/3} \cap\dot{H}^1(\R^3)$, respectively.


\medskip

We denote by $\mathcal{E}^{X}_n$ the space of all finite energy data of degree $n$, namely
\begin{equation} \label{eq:EXn}
\mathcal{E}_n^{X}=\left\{ (u,u_t) \left| E_{X} [u](t)<\infty, \ \ u_0(0)=0, \ \ \lim_{r\to \infty} u_0(r)= n\pi \right. \right\}, 
\end{equation}
where hereafter $X=S$ refers to the Skyrme model or when $X=AN$ to the Adkins-Nappi model. 
In  what follows, we consider $(u,u_t)\in\mathcal{E}^{X}_0$ and such that is a global solution of \eqref{skyrme} or \eqref{AdkinsNappi}, respectively.
\medskip

The main goal of this work is to prove that small global solutions with enough regularity of Skyrme \eqref{skyrme} and Adkins-Nappi \eqref{AdkinsNappi}
equations decay to zero in a certain region of the light cone. Furthermore, we also study the decay of an associated 
weighted energy for both equations, and which we need it to analyze their corresponding long time behavior.

\medskip

 More precisely, let $b>0$ and consider the following time depending subset 
\begin{align}\label{S}
R(t)=\{ x\in\R^3\ |\  |x|>(1+b)t \}\subset \R^3,
\end{align}
which is the complement of the ball of radius $(1+b)t$, for $b>0$. We will show that any global solution $u$ to \eqref{skyrme}  (or \eqref{AdkinsNappi}), which is sufficiently regular and without a previous smallness condition,
 must be concentrated inside the light cone. Namely

\begin{thm}[Decay in exterior light cones for the Skyrme and Adkins-Nappi models]\label{thm:S_exterior} \label{thm:AN_exterior}
	~{} Let $(u_0,u_1) \in \mathcal{E}_0^{X}$, defined in \eqref{eq:EXn}, such that $u$ is a global solution, for \eqref{skyrme} when $X=S$, or \eqref{AdkinsNappi} when $X=AN$, respectively.
	Then, for $R(t)$ as in \eqref{S}, there is strong decay to zero of the energy $E_{X}$, in particular:
	\begin{align}\label{decay0_S_ext} 
		\lim_{t\to\infty} \Vert (u_t(t),u_r(t))\Vert_{L^2\times L^2 (\R^3\cap R(t))}=0.
	\end{align}
	Additionally, one has the mild rate of decay for $|\sigma|>1$:
	\begin{align}\label{integrability0_S_ext} 
		\int_{2}^{\infty} \!\! \int_{0}^{\infty} e^{-c_0\vert r+\sigma t \vert}r^2(u_t^2+u_r^2) drdt \lesssim
		_{c_0} 1.
	\end{align}
\end{thm}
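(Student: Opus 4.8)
The plan is to reduce both estimates to the local radial energy conservation law for the two models and then localize it — sharply for \eqref{decay0_S_ext}, and with a smooth exponentially decaying weight for \eqref{integrability0_S_ext}. Writing $\mathcal{E}_X(t,r)$ for the integrand of $E_X$ in \eqref{energy_S}--\eqref{energy_AN}, so that $\mathcal{E}_X=a_X(u_t^2+u_r^2)+V_X$ with $a_{AN}=r^2$, $a_S=r^2+2\alpha^2\sin^2u$, and $V_X\ge0$ the potential part, I would first record that multiplying \eqref{skyrme} (resp. \eqref{AdkinsNappi}) by $2r^2u_t$ and rearranging yields, for the sufficiently regular global solutions under consideration, the conservation law
\[
\partial_t\mathcal{E}_X=\partial_r\mathcal{P}_X,\qquad \mathcal{P}_X:=2\,a_X(t,r)\,u_tu_r,
\]
where the nonlinear terms assemble into exact time derivatives of $V_X$ and the remaining cross terms cancel. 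Since $a_X\ge r^2>0$ and $V_X\ge0$, this produces the two pointwise bounds that drive everything:
\[
\mathcal{E}_X\ge r^2(u_t^2+u_r^2),\qquad \mathcal{E}_X\pm\mathcal{P}_X=a_X(u_t\pm u_r)^2+V_X\ge0\ \Longrightarrow\ |\mathcal{P}_X|\le\mathcal{E}_X.
\]

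For \eqref{decay0_S_ext} I would use finite speed of propagation, in the form of monotonicity of the energy on truncated exterior light cones. Fix $T>0$ and $L$ large, and set $\mathcal{R}_{T,L}(t)=\int_{bT+t}^{L-t}\mathcal{E}_X(t,r)\,dr$ for $0\le t\le T$. Differentiating and using $\partial_t\mathcal{E}_X=\partial_r\mathcal{P}_X$,
\[
\tfrac{d}{dt}\mathcal{R}_{T,L}(t)=-(\mathcal{E}_X-\mathcal{P}_X)(t,L-t)-(\mathcal{E}_X+\mathcal{P}_X)(t,bT+t)\le0,
\]
so $\mathcal{R}_{T,L}$ is non-increasing; comparing the endpoints and letting $L\to\infty$ gives $\int_{(1+b)T}^{\infty}\mathcal{E}_X(T,r)\,dr\le\int_{bT}^{\infty}\mathcal{E}_X(0,r)\,dr$. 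Since $(u_0,u_1)\in\mathcal{E}_0^X$ has $\int_0^\infty\mathcal{E}_X(0,r)\,dr=E_X<\infty$, the right-hand side is the tail of a convergent integral and tends to $0$ as $T\to\infty$; then $\mathcal{E}_X\ge r^2(u_t^2+u_r^2)$ and radiality give $\|(u_t(T),u_r(T))\|^2_{L^2\times L^2(\R^3\cap R(T))}\lesssim\int_{(1+b)T}^{\infty}r^2(u_t^2+u_r^2)\,dr\to0$, which is \eqref{decay0_S_ext}.

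For \eqref{integrability0_S_ext} I would run an exponentially weighted virial. Given $c_0>0$ and $|\sigma|>1$, let $\phi(s)=\int_{-\infty}^{s}e^{-c_0|\tau|}\,d\tau$, which is smooth, nondecreasing, bounded ($0\le\phi\le2/c_0$), with $\phi'(s)=e^{-c_0|s|}$, and set $H_\sigma(t)=\int_0^\infty\phi(r+\sigma t)\,\mathcal{E}_X(t,r)\,dr\le(2/c_0)E_X$. Since $\partial_t(r+\sigma t)=\sigma$ and $\partial_r(r+\sigma t)=1$, the conservation law and one integration by parts in $r$ give $H_\sigma'(t)=\int_0^\infty\phi'(r+\sigma t)\,(\sigma\,\mathcal{E}_X-\mathcal{P}_X)\,dr$. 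For $\sigma<-1$ the bracket is $\le-(|\sigma|-1)\mathcal{E}_X\le0$ (by $|\mathcal{P}_X|\le\mathcal{E}_X$), so $H_\sigma$ is non-increasing and integrating $-H_\sigma'$ over $(0,\infty)$,
\[
(|\sigma|-1)\int_0^\infty\!\!\int_0^\infty e^{-c_0|r+\sigma t|}\,\mathcal{E}_X\,dr\,dt\le H_\sigma(0)-\lim_{t\to\infty}H_\sigma(t)\le\tfrac{2}{c_0}E_X;
\]
for $\sigma>1$ the bracket is $\ge(\sigma-1)\mathcal{E}_X\ge0$, $H_\sigma$ is nondecreasing and bounded, and one integrates $H_\sigma'$ instead (or simply notes $e^{-c_0|r+\sigma t|}\le e^{-c_0\sigma t}$ and uses conservation of $E_X$). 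Dropping $V_X$ via $\mathcal{E}_X\ge r^2(u_t^2+u_r^2)$ and restricting $t$ to $[2,\infty)$ yields \eqref{integrability0_S_ext}, with implied constant $\lesssim E_X/(c_0(|\sigma|-1))$.

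The genuinely delicate point is the rigorous justification of the conservation law in integrated form and the vanishing of the spatial boundary terms for merely ``sufficiently regular'' solutions: $\mathcal{P}_X=2a_Xu_tu_r$ is $O(r^2)$ near $r=0$, while at $r=\infty$ one only has the conserved bound $\int r^2(u_t^2+u_r^2)\,dr\lesssim E_X$. The frustum truncation $r\le L-t$ already removes this difficulty for \eqref{decay0_S_ext}; for \eqref{integrability0_S_ext}, where $\phi$ does not vanish at infinity, I would insert a cutoff $\chi(r/\rho)$ into the weight, carry out the computation with error terms supported on $\{\rho\le r\le2\rho\}$ and controlled by that conserved quantity, and then let $\rho\to\infty$ along a sequence on which the errors vanish, the main terms converging monotonically. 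A secondary, purely computational obstacle is verifying the conservation law for the quasilinear model \eqref{skyrme}: after multiplying by $2r^2u_t$, one must check that all the $\alpha^2$ cross terms either telescope into $\partial_t(2\sin^2u+\alpha^2r^{-2}\sin^4u)$ or cancel against $\partial_t[2\alpha^2\sin^2u\,(u_t^2+u_r^2)]$ and $\partial_r\mathcal{P}_S$, with $\mathcal{P}_S=2(r^2+2\alpha^2\sin^2u)u_tu_r$.
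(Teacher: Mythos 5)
Your proposal is correct, and it is worth separating its two halves. For the integrability estimate \eqref{integrability0_S_ext} you are doing essentially what the paper does: your identity $\partial_t\mathcal{E}_X=\partial_r\mathcal{P}_X$ with $\mathcal{P}_X=2a_Xu_tu_r$ is exactly the pointwise form of the paper's Lemmas \ref{lem:I'} and \ref{lem:dtJ} (which are its weighted integral versions), and your functional $H_\sigma(t)=\int_0^\infty\phi(r+\sigma t)\,\mathcal{E}_X\,dr$ with $\phi'=e^{-c_0|\cdot|}$ plays the role of the paper's $\mathcal{I}_S(t)$ with $\varphi=\tanh\bigl(\tfrac{r+\sigma t}{L}\bigr)$, whose derivative $\frac1L\operatorname{sech}^2$ is comparable to the same exponential weight; the sign argument $\sigma\mathcal{E}_X-\mathcal{P}_X\le-(|\sigma|-1)\mathcal{E}_X$ is the paper's Cauchy--Schwarz step. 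You are in fact more careful than the paper on two points it passes over silently: the spatial boundary terms in the integration by parts (your cutoff $\chi(r/\rho)$), and the case $\sigma>1$, which the paper's lemma does not treat although the theorem is stated for $|\sigma|>1$. For the decay statement \eqref{decay0_S_ext} your route is genuinely different and arguably sharper: you prove monotonicity of the energy on the characteristic frustums $bT+t\le r\le L-t$ using $\mathcal{E}_X\pm\mathcal{P}_X\ge0$, which yields the quantitative bound $\int_{(1+b)T}^\infty\mathcal{E}_X(T,r)\,dr\le\int_{bT}^\infty\mathcal{E}_X(0,r)\,dr$ and concludes by the vanishing of the tail of the finite initial energy. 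The paper instead builds a translated smooth functional $\mathcal{I}_{S,t_0}(t)$ with weight $\tfrac12(1+\tanh)$ moving at the intermediate speed $\tilde\sigma=-(1+b/2)$, shows it is non-increasing on $[2,t_0]$, and compares with the value at $t=2$, where the weight profile has drifted to $-\infty$ so that the right-hand side vanishes as $t_0\to\infty$ by dominated convergence. Both arguments are finite-speed-of-propagation statements; yours gives an explicit decay rate in terms of the tail of the initial energy, while the paper's smoothed version avoids differentiating a sharply truncated integral and fits uniformly into its virial framework. The one point you should make explicit is that the quasilinear factor $1+2\alpha^2\sin^2(u)/r^2$ multiplies $u_{tt}-u_{rr}$ symmetrically in \eqref{skyrme}, so the characteristic speed is still $1$ and the frustum slopes $\pm1$ are indeed the right ones; this is exactly what makes $\mathcal{E}_S\pm\mathcal{P}_S=a_S(u_t\pm u_r)^2+V_S\ge0$ work.
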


\begin{rem}\label{rem:well_defined_spaces}
The spaces $\mathcal{E}_0^{X}$ are not empty. In fact, 
for the Skyrme and Adkins-Nappi equations, the corresponding energies are well-defined in the homogeneous Sobolev 
spaces $\dot{H}^{7/4} \cap\dot{H}^1(\R^3)$ and  $\dot{H}^{5/3} \cap\dot{H}^1(\R^3)$, respectively.
\end{rem}

\medskip

For the next results, we have to introduce a weighted version of the spaces \eqref{eq:EXn}.
Let  $\mathcal{E}^{X,\phi}_n$ the space of all finite $\phi$-weighted energy data of degree $n$
\begin{equation*}
\mathcal{E}_n^{X,\phi}=\left\{ (u,u_t) \left| E_{X,\phi} [u](t)<\infty,\ \ u_0(0)=0, \ \ \lim_{r\to \infty}  u_0(r)=n \pi \right. \right\}, 
\end{equation*}
 where $E_{X,\phi}$ is written for the Skyrme model as
\begin{equation}
	E_{S,\phi}[u](t)=\int_{0}^{\infty} \phi(r) \left[ \left(1+\dfrac{2 \alpha^2 \sin^2(u)}{r^2} \right) (u_t^2+u^2_r)+ 2\dfrac{\sin^2(u)}{r^2}
	+\dfrac{\alpha^2 \sin^4(u)}{r^4} \right], \label{w_energy_S}\\
\end{equation}	
and for the Adkins-Nappi model as
	\begin{equation}
	E_{AN,\phi}[u](t)=\int_{0}^{\infty} \phi(r) \left[ u_t^2+u^2_r+ 2\dfrac{\sin^2(u)}{r^2}+\dfrac{\left(u- \sin (u) \cos (u) \right)^2 }{r^4} \right]
	\label{w_energy_AN}.
\end{equation}
In fact, one can see, that  if $E_{X,r^2}[u](t)=E_{X}[u](t)$, then $\mathcal{E}_n^{X,r^2}=\mathcal{E}_n^{X}$, for $X\in\{S,AN\}$.\\

\medskip

Our second result shows that the energy $E_X$ associated to any global solution 
$(u,u_t)\in \mathcal{E}_{0}^{X,r^n}~\cap~\mathcal{E}_{0}^{X,r^{n-1}}$of \eqref{skyrme} or \eqref{AdkinsNappi},  decays to zero when $t$ goes to infinity. 
This means that for any global solution $u$ which is sufficiently regular and it satisfies a weighted integrability on 
$r$, its energy $E_{X,r^n}$ decays to zero when $t$ goes to infinity for both $X=S$ or $X=AN$ cases.

\medskip

\begin{thm}[Decay of weighted energies]\label{thm:S_interior} \label{thm:AN_interior}
	Let $\delta>0$ small enough. Let $(u,u_t) \in \mathcal{E}_{0}^{X,r^n}\cap \mathcal{E}_{0}^{X,r^{n-1}}$ a global solution of \eqref{skyrme} or 
	\eqref{AdkinsNappi}  such that
	\begin{align}\label{smallness0_S_int}
		\sup_{t\in\R} E_{X}[u](t)<\delta,  \ \mbox{ for  } X=AN,S.
	\end{align}
	 Then, the modified energy $E_{X,\varphi}[u](t)$  with $\varphi(r)=r^{n}$ decays to zero, for  $n> 7$  ($X=S$ case) or 
	 for $n\in\left[\frac{3+\sqrt{41}}{2},10\right]$ ($X=AN$ case), respectively. In particular,
	\begin{align}\label{decay0_S_int}
	\lim_{t\to \infty} \|r^{\frac{n-2}{2}}(u_t,u_r)(t)\|_{L^2\times L^2(\R^3)} = \lim_{t\to \infty} E_{X,r^n}(t)=0.
	\end{align}
\end{thm}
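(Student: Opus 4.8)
The plan is to prove Theorem~\ref{thm:S_interior} by means of a new \emph{weighted virial identity of momentum type}, using Theorem~\ref{thm:S_exterior} to absorb the contribution of a neighborhood of the light cone. The guiding observation is that the time derivative of the weighted energy is itself a momentum-type quantity: differentiating $E_{X,r^n}[u](t)$ in time, inserting \eqref{skyrme} (resp. \eqref{AdkinsNappi}) and integrating by parts in $r$, all potential contributions cancel and one is left with
\begin{equation*}
\frac{d}{dt}E_{X,r^n}[u](t)=c_n\int_0^\infty r^{n-1}u_t u_r\,dr + (\text{remainder}),\qquad c_n=4-2n\ \text{ for }X=AN,
\end{equation*}
the remainder for Skyrme coming from the quasilinear factor $1+\tfrac{2\alpha^2\sin^2 u}{r^2}$. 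Thus the decay of $E_{X,r^n}$ should be read off from the behavior of $\mathcal{I}(t):=\int_0^\infty \psi(r)\,u_t u_r\,dr$ for a suitable weight $\psi\approx r^{n-1}$, which by Cauchy--Schwarz satisfies $|\mathcal{I}(t)|\lesssim\int_0^\infty r^{n-1}(u_t^2+u_r^2)\,dr$ and is therefore controlled in time by the energies at the levels $n$ and $n-1$ --- this is exactly why the hypothesis requires membership in both $\mathcal{E}_0^{X,r^n}$ and $\mathcal{E}_0^{X,r^{n-1}}$.

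First I would compute $\mathcal{I}'(t)$ exactly, using the equation and integrating by parts in $r$. The boundary term at $r=0$ vanishes because $u(t,0)=0$ (hence also $u_t(t,0)=0$ and $\psi(0)=0$), and the one at $r=\infty$ vanishes because the finiteness of the $r^n$-weighted energy forces $r^{n-1}u_t u_r$, $r^{n-3}\sin^2 u$ and $r^{n-5}(u-\sin u\cos u)^2$ to vanish along a sequence $r_k\to\infty$ --- a routine density/approximation argument, which for the Skyrme model has to be run in the presence of the quasilinear factor. After collecting the terms one obtains
\begin{equation*}
-\mathcal{I}'(t)=\int_0^\infty\Big[A(r)\,u_t^2+B(r)\,u_r^2+C(r)\,\tfrac{\sin^2 u}{r^2}\Big]dr+\mathcal{R}(t),
\end{equation*}
where $A,B,C$ are explicit multiples of $r^{n-2}$ determined by $\psi,\psi',n$ and the corrections introduced below, and $\mathcal{R}(t)$ gathers the higher-order potential term (which behaves like $r^{n-4}u^6$ for small $u$) together with the Skyrme quasilinear contributions. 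Using the smallness \eqref{smallness0_S_int} and Sobolev embedding, $\mathcal{R}(t)$ is not used with a sign but bounded by a small multiple of the main quadratic form and absorbed.

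The crux is the coercivity of the quadratic form $\int(A u_t^2+B u_r^2+C\sin^2 u/r^2)$. A bare momentum functional makes the genuinely quadratic potential coefficient in $-\mathcal{I}'(t)$ proportional to $-(n-3)\,r^{n-2}$, hence negative --- the wrong sign for coercivity --- as soon as $n>3$, so I would add momentum-type corrections, terms of the form $\int\chi(r)\,u_t\sin u\cos u\,dr$ and/or $\int\chi(r)\,u_t u\,dr$, with $\chi$ and the coupling constants tuned so that their contributions turn $C$ positive while keeping $A$ and $B$ strictly positive with the correct homogeneity. Imposing all of these sign conditions simultaneously is precisely what selects the admissible exponents in the statement: $n>7$ for Skyrme, where the quasilinear structure raises the threshold and forces part of the coercivity budget to be spent on $\mathcal{R}(t)$, and $n\in\big[\tfrac{3+\sqrt{41}}{2},10\big]$ for Adkins-Nappi, with the lower endpoint the larger root of the quadratic inequality that encodes the joint positivity of the kinetic and corrected potential coefficients, and the upper endpoint arising from keeping the correction terms (and the $u^6$ term) controlled. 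I expect this balancing of the sign of the potential term against the kinetic ones, under a single admissible weight, to be the main obstacle.

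With a coercive bound $-\mathcal{I}'(t)\ge c\,\mathcal{N}(t)$ in hand --- where $\mathcal{N}(t)$ is a positive power-weighted version of $\|(u_t,u_r)(t)\|_{L^2\times L^2}^2$, up to the exterior region $R(t)$ of \eqref{S} that is handled by \eqref{integrability0_S_ext} --- I would integrate on $[2,T]$: the boundedness of $\mathcal{I}$ gives $\int_2^\infty\mathcal{N}(t)\,dt<\infty$, hence $\mathcal{N}(t_k)\to 0$ along a sequence $t_k\to\infty$. To upgrade this to a genuine limit I would combine it with the pointwise control $\big|\frac{d}{dt}E_{X,r^n}[u](t)\big|\lesssim\int_0^\infty r^{n-1}(u_t^2+u_r^2)\,dr$ and the analogous time-integrability at level $n-1$ (equivalently, use the convexity dichotomy for $E_{X,r^n}$ furnished by $\frac{d^2}{dt^2}E_{X,r^n}=-(2n-4)\,\mathcal{I}'(t)$ together with the a priori at most polynomial-in-$t$ bound on $E_{X,r^n}$ obtained by iterating the energy identities down to the conserved level $n=2$): a nonnegative function that is bounded on polynomial scales, whose second time-derivative is bounded below by a multiple of an associated weighted energy, and whose lower weighted energies are time-integrable, must tend to $0$. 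This gives $\lim_{t\to\infty}E_{X,r^n}[u](t)=0$, and \eqref{decay0_S_int} follows since $\|r^{(n-2)/2}(u_t,u_r)(t)\|_{L^2\times L^2(\R^3)}^2$ is precisely the kinetic part of $E_{X,r^n}[u](t)$.
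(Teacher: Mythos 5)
Your proposal follows essentially the same route as the paper: a weighted momentum functional ($\mathcal{K}_S$, resp. $\mathcal{M}_{AN}$) corrected by a $\int\phi\, u_t u$ term to fix the sign of the potential contribution, Taylor expansion of the trigonometric nonlinearities under the smallness hypothesis, a coercive virial inequality whose time-integration yields integrability of the $r^{n-1}$-weighted energy and sequential decay of $E_{X,r^n}$, and finally the upgrade to a full limit via the bound $\bigl|\frac{d}{dt}E_{X,r^n}\bigr|\lesssim\int_0^\infty r^{n-1}(u_t^2+u_r^2)\,dr$ integrated against the sequential decay. The only cosmetic difference is that the paper's coercivity is global in $r$ and does not need Theorem~\ref{thm:S_exterior} to handle the exterior region.
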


The next remark will be useful in the proof of Theorem \ref{thm:S_interior}.
\begin{rem}[\cite{GR2010a,L2015}]\label{rem:energy_Linfty}
Note that finite energy smooth solutions of Skyrme \eqref{skyrme} and Adkins-Nappi \eqref{AdkinsNappi} equations are uniformly bounded  as follows
\begin{equation*}
\| u\|_{L^\infty_{t,x}} \leq C(E_X [u,u_t](0)), \mbox{where } X\in \{S, AN\}, 
\end{equation*}
and $C(s)\to 0$ as $s\to 0$.
\end{rem}

\subsection{Idea of the proof}

\medskip

In order to prove Theorem \ref{thm:S_exterior}, 
we follow some ideas appeared in \cite{ACKM,AM,MM}, 
where decay for Camassa-Holm, Born-Infeld and Improved-Boussinesq models were considered. 
The main tool in these works was a suitable virial functional for which the dynamic of solutions is converging to zero when
 it is integrated in time. 
\medskip

In this paper, the new virial functionals give us relevant information about the dynamics of global solutions of Skyrme and Adkins-Nappi equations. 
Using a proper virial estimate, we prove that the corresponding energies associated to Skyrme and Adkins-Nappi equations decay 
to zero in the subset $R(t)$ \eqref{S}.

\medskip

Furthermore, to prove Theorem \ref{thm:AN_interior},  we will study the growth rate of polynomial weight energies of the Skyrme and Adkins-Nappi equations. After that,   assuming that their growth is bounded, we will prove that this 
growth decays to zero as $t$ tends to infinity. To prove this result, we introduce a functional associated with a sort of weighted momentum. 
It happens that the virial identity associated to this functional shows no evidence of good sign conditions, 
i.e. that the derivative of the functional is negative. Therefore, we have to introduce a new functional 
as a linear combination of these two viral identities and for which there is a good sign property. This ensures the integrability 
in time of polynomial weighted energies of degree $n$. Moreover, it also guarantees the decay of a polynomial weighted energy of degree $n+1$ over 
a subsequence of times. Combining these two facts, we conclude that the polynomial weighted energies, which are bounded, decay 
to zero as $t$ tends to infinity (over $\R^3$).
%




\subsection*{Organization of this paper}
 This chapter is organized as follows: Section \ref{sec:virial_identities} is splitted in two subsections where a series of virial identities are presented: in Subsection \ref{Sect:virial_skyrme} and \ref{Sect:virial_AdkinsNappi}  we show the virial identities used for to prove the decay of the energy in the Skyrme  equation \eqref{skyrme} and \eqref{AdkinsNappi}, respectively. 
 Section \ref{Sect:4} deals with the proof of Theorem \ref{thm:S_exterior} for the Skyrme  and Adkins-Nappi equations.
 Finally,  Section \ref{Sect:5}  deals with the proof of Theorem \ref{thm:AN_interior} for both models.

\subsection*{Acknowledgments} 
The authors are indebted to C. Mu\~noz for stimulating discussions and valuable suggestions that helped to improve a previous version of this work. 
Second author deeply thanks F. Gancedo (U. Sevilla) for his hospitality during some research stays 
where part of this work was done.

\section{Virial Identities}\label{sec:virial_identities}
	In this section three virial identities for the Skyrme and Adkins-Nappi models \eqref{skyrme}-\eqref{AdkinsNappi} are presented.
	One of the virial functionals is related with the exterior light cone behavior (Theorem \ref{thm:S_exterior}), and the other ones 
	are useful for understanding the decay of the weighted energy for Skyrme and Adkins-Nappi models  (Theorem \ref{thm:S_interior}). 
	 Moreover, we remark here that the energies $E_{S}[u]$ and $E_{AN}[u]$, defined in \eqref{energy_S} and \eqref{energy_AN}, 
	 are bounded in spaces $\mathcal{E}_{0}^S$ and $\mathcal{E}_{0}^{AN}$, respectively. 
	Furthermore, 
 it is well-known that these energies are well defined  in the homogeneous  
	Sobolev spaces $\dot{H}^{7/4} \cap\dot{H}^1(\R^3)$ and $\dot{H}^{5/3}\cap \dot{H}^1(\R^3)$ for the Skyrme and Adkins-Nappi equations, respectively.

	\subsection{Virial identities for the Skyrme Model}\label{Sect:virial_skyrme}

%

	Let
$\varphi=\varphi(t,r)$ be a smooth, bounded weight function, to be chosen later.
For each $t\in \R$ we consider the following functional
	\begin{align}
	\II=& \int_{0}^{\infty} \varphi r^2 \left[ \left(1+\dfrac{2 \alpha^2 \sin^2(u)}{r^2} \right)
	(u_t^2+u^2_r)+ 2\dfrac{\sin^2(u)}{r^2}+\dfrac{\alpha^2 \sin^4(u)}{r^4} \right] \label{I},
	\end{align}
which is a generalization of the energy introduced in \eqref{energy_S}, and well-defined for $(u,u_t)\in (\dot{H}^{7/4} \cap\dot{H}^1) \times L^2 (\R^3)$.
Moreover, if $\varphi$ only depends  on $r$ and it can be written as $\varphi(r)=\phi/r^2$,  then we recover $E_{S,\phi}$, which is the weighted energy defined in \eqref{w_energy_S}.  The following  identities will be useful for the proof of Theorems \ref{thm:S_exterior}-\ref{thm:S_interior}. 

  The following result shows the variation of the localized energy   for the Skyrme equation:
  	\begin{lem}[Energy local variations: Skyrme Model]\label{lem:I'}
	For any $t\in \R,~\varphi(t,r)$ a smooth function previously defined, and  $\II$ as in \eqref{I}, we have that
	\begin{equation}
			\begin{aligned}
				\dfrac{d}{dt}\II=&~{} \int_{0}^{\infty} \varphi_t r^2 \left[ \left(1+\dfrac{2 \alpha^2 \sin^2(u)}{r^2} \right)
				(u_t^2+u^2_r)+ 2\dfrac{\sin^2(u)}{r^2}+\dfrac{\alpha^2 \sin^4(u)}{r^4} \right]\\
				&-\int_{0}^{\infty} \varphi_r r^2  \left(1+\dfrac{2 \alpha^2 \sin^2(u)}{r^2} \right) 2 u_t u_r.\label{derI1}
				\end{aligned}
				\end{equation}
	\end{lem}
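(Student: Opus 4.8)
The plan is to differentiate $\II$ under the integral sign and to use the Skyrme equation \eqref{skyrme} to trade the second-order time derivative for spatial derivatives, after which a single integration by parts in $r$ yields exactly the two terms of \eqref{derI1}. Denote by $\rho(t,r):=(r^2+2\alpha^2\sin^2 u)(u_t^2+u_r^2)+2\sin^2 u+\alpha^2\sin^4 u/r^2$ the unweighted energy density, so that $\II=\int_0^\infty\varphi\,\rho\,dr$. By the chain rule $\frac{d}{dt}\II=\int_0^\infty\varphi_t\,\rho\,dr+\int_0^\infty\varphi\,\partial_t\rho\,dr$; the first integral is already the first term of \eqref{derI1}, so everything reduces to showing $\int_0^\infty\varphi\,\partial_t\rho\,dr=-\int_0^\infty\varphi_r\,r^2(1+2\alpha^2\sin^2(u)/r^2)\,2u_tu_r\,dr$.

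The core of the argument is the pointwise local conservation law
\[
\partial_t\rho=\partial_r G,\qquad G(t,r):=2\big(r^2+2\alpha^2\sin^2 u\big)u_tu_r=2r^2\Big(1+\frac{2\alpha^2\sin^2(u)}{r^2}\Big)u_tu_r .
\]
To get it I would expand $\partial_t\rho$ by the chain rule, isolate the $u_{tt}$ contribution $2(r^2+2\alpha^2\sin^2 u)u_tu_{tt}$, and substitute, straight from \eqref{skyrme}, $(r^2+2\alpha^2\sin^2 u)u_{tt}=(r^2+2\alpha^2\sin^2 u)u_{rr}+2ru_r-\sin(2u)\big[1+\alpha^2(u_t^2-u_r^2+\sin^2 u/r^2)\big]$. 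After this substitution the $\alpha$-free terms proportional to $\sin(2u)u_t$ cancel, and the $\alpha^2$ ones combine via the identity $-(u_t^2-u_r^2+\sin^2 u/r^2)+(u_t^2+u_r^2)+\sin^2 u/r^2=2u_r^2$ into the single term $4\alpha^2\sin(2u)u_tu_r^2$; what remains, $2(r^2+2\alpha^2\sin^2 u)u_tu_{rr}+2(r^2+2\alpha^2\sin^2 u)u_ru_{tr}+4ru_tu_r+4\alpha^2\sin(2u)u_tu_r^2$, is then checked term by term to coincide with $\partial_r G$. Granting the conservation law, integrating by parts gives $\int_0^\infty\varphi\,\partial_t\rho\,dr=\big[\varphi G\big]_0^\infty-\int_0^\infty\varphi_r G\,dr$; the boundary terms vanish since $u(t,0)=0$ forces $\sin^2 u=O(r^2)$ and $r^2u_tu_r\to0$ as $r\to0$ for a sufficiently regular solution, while finiteness of $E_S[u]$ makes $r^2(u_t^2+u_r^2)$ (hence $G$) decay as $r\to\infty$ with $\varphi$ bounded. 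This leaves $\int_0^\infty\varphi\,\partial_t\rho\,dr=-\int_0^\infty\varphi_r G\,dr$, which is precisely the second term of \eqref{derI1}.

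I do not expect a substantive obstacle: this is an energy-flux identity and the only genuine work is the algebraic reorganization once \eqref{skyrme} is inserted. The delicate point is the cancellation of the $\sin(2u)u_t$ terms, which depends on the exact coefficients of the nonlinearity in \eqref{skyrme}; a sign error there would leave a remainder that is not an $r$-derivative and $\frac{d}{dt}\II$ would fail to close into the clean form \eqref{derI1}. The remaining technical point is justifying the vanishing boundary terms, which one reads off from the regularity and decay built into $\mathcal{E}_0^S$ together with Remark~\ref{rem:energy_Linfty}, rather than from finiteness of the energy at a single time alone.
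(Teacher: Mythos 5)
Your proof is correct and follows essentially the same route as the paper's: differentiate under the integral, use \eqref{skyrme} to eliminate the $u_{tt}$ term, and perform one integration by parts in $r$; your repackaging of the cancellation as the pointwise flux identity $\partial_t\rho=\partial_r G$ with $G=2(r^2+2\alpha^2\sin^2 u)u_tu_r$ is just a cleaner organization of the same algebra. The only addition is your explicit justification of the vanishing boundary terms, which the paper leaves implicit.
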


	\begin{proof}[Proof of Lemma \ref{lem:I'}] Derivating  \eqref{I} with respect to time,
	and using a basic trigonometric relation, we have
		\[
		\begin{aligned}
			\dfrac{d}{dt}\II
			=&~{}
			 \int_{0}^{\infty} \varphi_t r^2 \left[ \left(1+\dfrac{2 \alpha^2 \sin^2(u)}{r^2} \right) (u_t^2+u^2_r)+ 2\dfrac{\sin^2(u)}{r^2}+\dfrac{\alpha^2 \sin^4(u)}{r^4} \right]\\
			&+2\int_{0}^{\infty} \varphi r^2 u_t\bigg[ \left(\dfrac{ \alpha^2  \sin(2 u)  }{r^2} \right) (u_t^2+u^2_r)
			+\dfrac{  \sin(2u) }{r^2}+\dfrac{\alpha^2 \sin^2(u) \sin(2u)   }{r^4} \bigg]\\
			&+2\int_{0}^{\infty} \varphi r^2 u_t
			\left(1+\dfrac{2 \alpha^2 \sin^2(u)}{r^2} \right) u_{tt} 
			+2\int_{0}^{\infty} \varphi r^2 \left(1+\dfrac{2 \alpha^2 \sin^2(u)}{r^2} \right) u_r u_{rt}\\
			&:=I_1+I_2+I_3+I_4.
		\end{aligned}
		\]
Now, using the equation \eqref{skyrme} in $I_3$, we have
	\[
	\begin{aligned}
		I_3
		=&
		2\int_{0}^{\infty} \varphi r^2 u_t\left\{
		\left( 1+\dfrac{2 \alpha^2 \sin^2(u)}{r^2} \right)u_{rr}+
		\dfrac{2}{r}u_r
		- \dfrac{ \sin(2u)}{r^2} \left[1+\alpha^2\left(u^2_t -u_r^2 +\dfrac{\sin^2(u)}{r^2} \right) \right]\right\}.
	\end{aligned}
	\]
	And integrating by parts in the last integral $I_4$, we obtain
	\[
	\begin{aligned}
			\frac12 I_4
		=&~
		-\int_{0}^{\infty} \varphi_r r^2\left(1+\frac{2 \alpha^2 \sin^2(u)}{r^2} \right)u_r u_t\\
		&-\int_{0}^{\infty}\varphi r^2 u_{t} \bigg(\left(\dfrac{2}{r}u_r+\frac{2 \alpha^2 \sin (2u)}{r^2} u_r^2 \right)
		-  \left(1+\dfrac{2 \alpha^2 \sin^2(u)}{r^2} \right)  u_{rr}\bigg) .
	\end{aligned}
	\]
Finally, we have
\[
\begin{aligned}
	I_2+I_3+I_4
	=&~{}
	 -2\int_{0}^{\infty} \varphi_r r^2\left(1+\frac{2 \alpha^2 \sin^2(u)}{r^2} \right) u_t u_r,
\end{aligned}
\]
and we get that
\[
\begin{aligned}
\dfrac{d}{dt}\II
=&~{}
 \int_{0}^{\infty} \varphi_t r^2 \left[ \left(1+\dfrac{2 \alpha^2 \sin^2(u)}{r^2} \right) (u_t^2+u^2_r)+ 2\dfrac{\sin^2(u)}{r^2}
 +\dfrac{\alpha^2 \sin^4(u)}{r^4} \right]\\
&-2\int_{0}^{\infty} \varphi_r r^2\left(1+\frac{2 \alpha^2 \sin^2(u)}{r^2} \right) u_t u_r.
\end{aligned}
\]
This concludes the proof.
	\end{proof}
\begin{rem}\label{rem:I'}
With the change of variables $\varphi=\phi/r^2$, we avoid the term $r^2$ in the weighted function \eqref{I}, 
and which coming from the dimension of the problem. Then,  $E_{S,\phi}$ is recovered from $\II$ and applying the Lemma \ref{lem:I'}, we get 
\begin{equation}\label{eq:I_wo_r2}
\begin{aligned}
\dfrac{d}{dt}E_{S,\phi}
=&~{}
 \int_{0}^{\infty} \phi_t \left[ \left(1+\dfrac{2 \alpha^2 \sin^2(u)}{r^2} \right) (u_t^2+u^2_r)+ 2\dfrac{\sin^2(u)}{r^2}
 +\dfrac{\alpha^2 \sin^4(u)}{r^4} \right]\\
&-2\int_{0}^{\infty} \left(\phi' -2\frac{\phi}{r}\right)\left(1+\frac{2 \alpha^2 \sin^2(u)}{r^2} \right) u_t u_r.
\end{aligned}
\end{equation}
This relation will be useful in the proof of Theorem \ref{thm:S_interior}.
\end{rem}

Now, we define two functionals that we will use to prove the decay of the weighted energy $E_{X,\phi}$. Firstly, 
denote by $f$ the following function
\begin{equation}\label{f}
f(u)= 1+\frac{2 \alpha^2 \sin^2(u)}{r^2}.
\end{equation}
Now, considering $\psi$ and $\phi$ smooth weight functions of $r$, which will be chosen later, we define the functional 
 $\Ks$ associated with a sort of momentum, given by
\be\label{K}
\begin{aligned}
\Ks=\int_{0}^\infty  \psi f(u)u_t u_r,
\end{aligned}
\ee
\noindent 
and  the functional $\Ps$, which corrects the bad sign of the variation in time on the functional $\Ks$, and which is given by
\be\label{P}
\begin{aligned}
\Ps=\int_{0}^\infty  \phi f(u) u_t u.
\end{aligned}
\ee

\begin{lem}[]\label{lem:K'}
Let $t\in \R$, $\psi$ be a smooth weight function and $\Ks$ defined as in \eqref{K}. If $u\in \mathcal{E}_0^{S,\psi}$
 and $p(r)= \left(\frac{\psi'}{r^2} -4\frac{\psi}{r^3}\right)$, then we have
	\begin{equation}\label{eq:dt_KS}
	\begin{aligned}
	\dfrac{d}{dt}\Ks=&~{}
			-\frac12 \int_{0}^\infty  \frac{\psi'}{r^2}  r^2 u_t^2 
			-\frac12 \int_{0}^\infty  p(r) r^2 u_r^2 
			-\frac{\alpha^2}{2} \int_{0}^\infty p(r)  \sin^2(u) (u_t^2+u_r^2)\\
			&
			+\int_{0}^\infty  \left(\frac{\psi'}{r^2}-2\frac{\psi}{r^3}\right)\sin^2 (u) 
			+\frac{\alpha^2}{2}  \int_{0}^\infty  p(r)   \frac{\sin^4 (u)}{r^2} .
\end{aligned}
	\end{equation}
\end{lem}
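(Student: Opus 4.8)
The plan is to differentiate $\Ks=\int_0^\infty \psi f(u)u_t u_r$ directly in time, substitute the Skyrme equation \eqref{skyrme} for $u_{tt}$, and then integrate by parts in $r$ to move all derivatives off the highest-order terms, grouping everything into the five displayed integrals. First I would write
\[
\dfrac{d}{dt}\Ks=\int_0^\infty \psi f(u)\,u_{tt}u_r+\int_0^\infty \psi f(u)\,u_t u_{rt}+\int_0^\infty \psi\,\partial_t(f(u))\,u_t u_r,
\]
noting $\partial_t f(u)=\tfrac{2\alpha^2\sin(2u)}{r^2}u_t$ from \eqref{f}. In the second integral, since $u_t u_{rt}=\tfrac12\partial_r(u_t^2)$, an integration by parts gives $-\tfrac12\int_0^\infty \partial_r(\psi f(u))\,u_t^2=-\tfrac12\int_0^\infty \psi' f(u)u_t^2-\tfrac12\int_0^\infty\psi\,\partial_r(f(u))u_t^2$; the boundary terms vanish because of the $r^2$-type decay built into $u\in\mathcal E_0^{S,\psi}$ and the behaviour at $r=0$. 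This already produces the $-\tfrac12\int \tfrac{\psi'}{r^2}r^2u_t^2$ piece together with an $\alpha^2\sin^2$ companion term.

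Next I would handle the $u_{tt}$ term. Solving \eqref{skyrme} for $u_{tt}$ gives
\[
u_{tt}=u_{rr}+\frac{1}{f(u)}\left(\frac{2}{r}u_r-\frac{\sin(2u)}{r^2}\Big[1+\alpha^2\big(u_t^2-u_r^2+\tfrac{\sin^2(u)}{r^2}\big)\Big]\right),
\]
so that $\psi f(u)u_{tt}u_r=\psi f(u)u_{rr}u_r+\psi u_r\big(\tfrac{2}{r}u_r-\tfrac{\sin(2u)}{r^2}[1+\alpha^2(\cdots)]\big)$. In the first of these, $f(u)u_{rr}u_r=\tfrac12 f(u)\partial_r(u_r^2)$, and I would integrate by parts to obtain $-\tfrac12\int_0^\infty\partial_r(\psi f(u))u_r^2$, which splits as $-\tfrac12\int\psi' f(u)u_r^2-\tfrac12\int\psi\,\partial_r(f(u))u_r^2$; the $\partial_r$ here hits the explicit $r$-dependence of $f$ as well as $u$, producing terms one then reorganizes using $p(r)=\tfrac{\psi'}{r^2}-4\tfrac{\psi}{r^3}$ (note $\partial_r(1/r^2)=-2/r^3$ is exactly what converts $\psi'/r^2$ combinations into $p(r)$). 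The remaining algebraic terms — the $\tfrac{2}{r}u_r^2$ contribution, the $\sin(2u)$ terms, and the $\partial_t f$ term from the first paragraph — are first-order in derivatives; the $\sin(2u)\,u_r$ and $\sin(2u)\,u_t^2 u_r$-type expressions are recognized as $r$- or $t$-total derivatives of $\sin^2(u)$ (since $\partial_r\sin^2(u)=\sin(2u)u_r$), so one more integration by parts in $r$ turns them into the $\int(\tfrac{\psi'}{r^2}-2\tfrac{\psi}{r^3})\sin^2(u)$ and $\int p(r)\tfrac{\sin^4(u)}{r^2}$ terms, while the $u_t^2$ and $u_r^2$ pieces carried along by $\alpha^2\sin^2(u)$ assemble into $-\tfrac{\alpha^2}{2}\int p(r)\sin^2(u)(u_t^2+u_r^2)$.

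The main obstacle I anticipate is bookkeeping: there are many $\alpha^2$-weighted terms of the form $\tfrac{\sin^2(u)}{r^2}\times(\text{quadratic in }u_t,u_r)$ and $\tfrac{\sin(2u)}{r^2}\times(\cdots)$ coming from three sources — the $\partial_t f(u)$ term, the $f(u)u_{rr}u_r$ integration by parts, and the nonlinear bracket in \eqref{skyrme} — and checking that they cancel in pairs except for the combination that collapses into $p(r)$ is the delicate part. A secondary point to be careful about is the justification that all boundary contributions at $r=0$ and $r=\infty$ vanish; this uses the finiteness of $E_{S,\psi}[u]$ (so $\psi f(u)(u_t^2+u_r^2)$ and $\psi\sin^2(u)/r^2$ are integrable) together with the continuity and decay properties of smooth finite-energy solutions recalled in Remark~\ref{rem:energy_Linfty}. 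Once the cancellations are verified, the identity \eqref{eq:dt_KS} follows by collecting the surviving terms.
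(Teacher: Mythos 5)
Your plan follows essentially the same route as the paper's proof: differentiate in time, split off $u_tu_{rt}=\tfrac12\partial_r(u_t^2)$, substitute \eqref{skyrme} for the $u_{tt}$ term, integrate by parts the $f(u)u_{rr}u_r=\tfrac12 f(u)\partial_r(u_r^2)$ piece, rewrite $\sin(2u)u_r$ as $\partial_r\sin^2(u)$, and regroup via $p(r)$. The only small imprecision is that the $\alpha^2\sin(2u)u_t^2u_r$ contributions (from $\partial_tf$, from $\partial_rf$, and from the nonlinear bracket) cancel outright with coefficient $2-1-1=0$ rather than being integrated by parts, but you correctly flag this bookkeeping as the delicate step, so the proposal matches the paper's argument.
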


\begin{proof}
First of all, we notice that the time and radial derivates of $f$ \eqref{f} are
\begin{equation}\label{f_der}
\begin{aligned}
(f(u))_t=2\alpha^2 \frac{\sin(2u)u_t}{r^2}, \quad \mbox{and }\quad (f(u))_r=2\alpha^2\frac{\sin(2u)u_r}{r^2}-4\alpha^2\frac{\sin^2(u)}{r^3}.
\end{aligned}
\end{equation}
Secondly, derivating the functional \eqref{K} with respect to time, we get
	\[
	\begin{aligned}
			\frac{d}{dt} \Ks
			=&\int_{0}^\infty  \psi \left(f(u) \right)_t u_t u_r 
			+\int_{0}^\infty  \psi f(u) (u_{tt} u_r+u_t u_{rt})\\
			=&\int_{0}^\infty  \psi \left(f(u) \right)_t u_t u_r 
			+\int_{0}^\infty  \psi f(u) u_{tt} u_r +\frac12 \int_{0}^\infty  \psi f(u) (u_t^2)_r.
	\end{aligned}
	\]
Integrating by parts in the last term of the RHS, we obtain
	\begin{equation}\label{eq:Ks_def}
	\begin{aligned}
				\frac{d}{dt} \Ks
			=&\int_{0}^\infty  \psi \left(f(u) \right)_t u_t u_r 
			+\int_{0}^\infty  \psi f(u) u_{tt} u_r 
			-\frac12 \int_{0}^\infty  \psi' f(u) u_t^2\\
			&-\frac12 \int_{0}^\infty  \psi (f(u))_r u_t^2 
	:=~{}K_1+K_2+K_3+K_4.
	\end{aligned}
	\end{equation}
	For $K_2$, using \eqref{skyrme}, we obtain
	\[
	\begin{aligned}
	K_2
	=&\int_{0}^\infty  \psi u_r \left\{
	f(u) u_{rr}+\frac{2}{r} u_r-\dfrac{ \sin(2u)}{r^2} \left[1+\alpha^2\left(u^2_t -u_r^2 +\dfrac{\sin^2(u)}{r^2} \right)
	\right] \right\}\\
	=
		&-\int_{0}^\infty  \left(\psi  
	f(u) \right)_r \frac{u_{r}^2}{2}
	+2\int_{0}^\infty  \frac{\psi}{r} u_r^2
	-\int_{0}^\infty  \frac{\psi}{r^2}  \sin (2u) u_r \left[1+\alpha^2\left(u^2_t -u_r^2 +\dfrac{\sin^2(u)}{r^2} \right)
	\right] \\
	=&-\int_{0}^\infty  \left[\psi ' f(u)+\psi  (f(u))_r \right] \frac{u_{r}^2}{2}
	+2\int_{0}^\infty  \frac{\psi}{r} u_r^2\\
	&-\int_{0}^\infty  \frac{\psi}{r^2}  \sin (2u) u_r \left[1+\alpha^2\left(u^2_t -u_r^2 +\dfrac{\sin^2(u)}{r^2} \right)
	\right], \\
	\end{aligned}
	\]
	and replacing \eqref{f_der}, we get
		\begin{equation}\label{K2}
	\begin{aligned}
	K_2
	=& -\int_{0}^\infty  \psi ' f(u)  \frac{u_{r}^2}{2}
	-\alpha^2\int_{0}^\infty \psi \frac{\sin(2u)}{r^2} u_{r}^3
	+2\alpha^2 \int_{0}^\infty \psi \frac{\sin^2(u)}{r^3}  u_{r}^2
	+2\int_{0}^\infty  \frac{\psi}{r} u_r^2\\
	&-\int_{0}^\infty  \frac{\psi}{r^2}  \sin (2u) u_r \left[1+\alpha^2\left(u^2_t -u_r^2 +\dfrac{\sin^2(u)}{r^2} \right)
	\right] \\
	=& -\int_{0}^\infty  \psi ' f(u)  \frac{u_{r}^2}{2}
	+2\alpha^2 \int_{0}^\infty \psi \frac{\sin^2(u)}{r^3}  u_{r}^2
	+2\int_{0}^\infty  \frac{\psi}{r} u_r^2\\
	&-\int_{0}^\infty  \frac{\psi}{r^2}  \sin (2u) u_r \left[1+\alpha^2\left(u^2_t  +\dfrac{\sin^2(u)}{r^2} \right)
	\right]. 
	\end{aligned}
	\end{equation}
Using \eqref{K2} and \eqref{f_der} in \eqref{eq:Ks_def}, one can see
	\[
	\begin{aligned}
		\frac{d}{dt} \Ks
				=&
			-\frac12 \int_{0}^\infty  \psi' \left(  1+\frac{2 \alpha^2 \sin^2(u)}{r^2} \right) (u_t^2+u_r^2) 
			+2\alpha^2\int_{0}^\infty \psi \frac{\sin^2(u)}{r^3} (u_t^2+u_r^2)\\
			&
			+2\int_{0}^\infty  \frac{\psi}{r} u_r^2
			-\int_{0}^\infty  \frac{\psi}{r^2}  (\sin^2 (u))_r
			-\frac{\alpha^2}{2} \int_{0}^\infty  \frac{\psi}{r^4}  (\sin^4 (u))_r.
	\end{aligned}
	\]
	Finally, integrating by parts and regrouping terms, we get
		\[
	\begin{aligned}
		\frac{d}{dt} \Ks
				=&
			-\frac12 \int_{0}^\infty  \psi' \left(  1+\frac{2 \alpha^2 \sin^2(u)}{r^2} \right) (u_t^2+u_r^2) 
			+2\alpha^2\int_{0}^\infty \frac{\psi}{r} \frac{\sin^2(u)}{r^2} (u_t^2+u_r^2)\\
			&
			+2\int_{0}^\infty  \frac{\psi}{r} u_r^2
			+\int_{0}^\infty  \frac{\psi'}{r^2}\sin^2 (u) \left(1
			+\frac{\alpha^2}{2}   \frac{\sin^2 (u)}{r^2}\right) \\
			&-2\int_{0}^\infty \frac{\psi}{r^3}  \sin^2 (u)\left( 1+\alpha^2  \frac{\sin^2 (u)}{r^2}\right)\\
			=&-\frac12 \int_{0}^\infty  \frac{\psi'}{r^2}  r^2 u_t^2 
			-\frac12 \int_{0}^\infty  p(r) r^2 u_r^2 
			-\frac{\alpha^2}{2} \int_{0}^\infty p(r)  \sin^2(u) (u_t^2+u_r^2)\\
			&
			+\int_{0}^\infty  \left(\frac{\psi'}{r^2}-2\frac{\psi}{r^3}\right)\sin^2 (u) 
			+\frac{\alpha^2}{2}  \int_{0}^\infty  p(r)   \frac{\sin^4 (u)}{r^2},
	%
	\end{aligned}
	\]
	and we conclude.
	\end{proof}

Similarly, we have the following result for the correction term $\Ps$ \eqref{P}.

\begin{lem}[]\label{lem:P'}
	Let $t\in \R$, $\phi$ be a smooth weight function and $\Ps$ as in \eqref{P}. Then, if $u\in \mathcal{E}_0^{S,r\phi}$, we have
	\begin{equation}\label{eq:dt_PS}
	\begin{aligned}
			\frac{d}{dt} \Ps
		=&~{}
	\alpha^2 \int_{0}^\infty \frac{ \phi}{r^2} \left( \sin (2u)  u  + 2 \sin^2 (u) \right) (u_t^2-u_r^2) + \int_{0}^\infty  \phi  (u_{t}^2-u_r^2)\\
		&
	-\int_{0}^\infty   \left[
					 \left( \frac{\phi}{r} \right)_r  -\frac12   \phi'' 
					\right] u^2
	-\int_{0}^\infty  \phi  \dfrac{\sin (2u)}{r^2} u\\
	&+\alpha^2\int_{0}^\infty \left( r\phi'' -4\phi' +6\frac{\phi}{r}\right)  \frac{\sin^2(u)}{r^3}  u^2 -
		\alpha^2 \int_{0}^\infty  \dfrac{\phi}{r}  \dfrac{u \sin (2u) \sin^2(u)}{r^3} \\
		&+ \alpha^2\int_{0}^\infty \left(\phi'-2 \frac{\phi}{r} \right)   \frac{\sin(2u)}{r^2} u_r u^2 .
	\end{aligned}
	\end{equation}
\end{lem}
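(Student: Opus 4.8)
The plan is to mimic the computation done for $\Ks$ in Lemma \ref{lem:K'}: differentiate $\Ps = \int_0^\infty \phi f(u)\, u_t u$ in time, substitute the Skyrme equation \eqref{skyrme} wherever a $u_{tt}$ appears, and then integrate by parts repeatedly in $r$ to remove all second-order radial derivatives, regrouping the result so that only the weights $\phi$, $\phi'$, $\phi''$ (and combinations like $\phi/r$) remain. First I would write
\[
\frac{d}{dt}\Ps = \int_0^\infty \phi\,(f(u))_t\, u_t u + \int_0^\infty \phi f(u)\, u_{tt}\, u + \int_0^\infty \phi f(u)\, u_t^2,
\]
using the first formula in \eqref{f_der} for $(f(u))_t = 2\alpha^2 r^{-2}\sin(2u)\,u_t$. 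The term $\int \phi f(u)\,u_t^2$ already contributes $\int \phi u_t^2 + 2\alpha^2\int (\phi/r^2)\sin^2(u)\,u_t^2$, which matches two of the pieces carrying $u_t^2$ in the claimed identity.

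Next I would treat the middle term. Writing $f(u)u_{tt} = f(u)u_{rr} + \tfrac{2}{r}u_r - \tfrac{\sin(2u)}{r^2}[1+\alpha^2(u_t^2 - u_r^2 + \tfrac{\sin^2 u}{r^2})]$ from \eqref{skyrme}, the $\int \phi f(u) u_{rr}\, u$ piece must be integrated by parts twice in $r$: once to move $\partial_r$ off $u_{rr}$, producing $-\int \partial_r(\phi f(u) u)\,u_r$, and then handling the resulting $\int \phi f(u) u_r^2$ and $\int \partial_r(\phi f(u))\, u u_r = \tfrac12\int \partial_r(\phi f(u))\,(u^2)_r$ terms — the latter integrated by parts once more to land a $\partial_r^2(\phi f(u))$ on $u^2$. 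Here I would use $(f(u))_r = 2\alpha^2 r^{-2}\sin(2u) u_r - 4\alpha^2 r^{-3}\sin^2 u$ from \eqref{f_der}, and expand $\partial_r^2(\phi f(u))$ carefully; this is where the coefficient $r\phi'' - 4\phi' + 6\phi/r$ multiplying $\alpha^2 r^{-3}\sin^2(u)\,u^2$ should emerge, together with the $\tfrac12\phi''\,u^2$ and $(\phi/r)_r\, u^2$ contributions and the cubic-in-$u$ correction $\alpha^2(\phi' - 2\phi/r) r^{-2}\sin(2u)\,u_r u^2$. The lower-order potential terms from \eqref{skyrme}, namely $-\int \phi\, r^{-2}\sin(2u)\,u\,[1+\alpha^2(u_t^2-u_r^2+r^{-2}\sin^2 u)]$, contribute directly the $-\int\phi r^{-2}\sin(2u)u$ and $-\alpha^2\int (\phi/r) r^{-3} u\sin(2u)\sin^2 u$ terms, as well as the $\alpha^2\int (\phi/r^2)\sin(2u)\,u\,(u_t^2 - u_r^2)$ piece; combined with the $\tfrac{2}{r}u_r$ term (integrated against $\phi u$ by parts) this should close up the $\int\phi(u_t^2 - u_r^2)$ and remaining $u_r^2$ bookkeeping.

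Finally I would carefully collect all $u_t^2$ terms, all $u_r^2$ terms, all $u^2$ terms, and all higher-order ($\sin$-weighted) terms separately and check they assemble into the six groups displayed in \eqref{eq:dt_PS}; the regularity $u\in\mathcal{E}_0^{S,r\phi}$ is exactly what guarantees that every boundary term at $r=0$ and $r=\infty$ in the integrations by parts vanishes (using $u(0)=0$ and the $r\phi$-weighted energy finiteness). The main obstacle is bookkeeping rather than conceptual: the $u_{rr}$ term requires a double integration by parts in which $f(u)$ itself depends on $r$ both explicitly (through $r^{-2}$) and through $u(r)$, so $\partial_r^2(\phi f(u))$ generates many terms (including the cubic $\sin(2u) u_r u^2$ term that has no analogue in the energy identity), and one must be scrupulous about not double-counting the $\alpha^2$-weighted contributions shared between $f(u)u_t^2$, $f(u)u_{rr}u$, and the potential terms. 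I would organize the calculation by first doing the $\alpha=0$ (leading-order) identity to fix the $\phi$, $\phi''$, $(\phi/r)_r$ structure, then append the $\alpha^2$ corrections systematically.
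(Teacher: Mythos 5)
Your strategy is the same as the paper's: split $\frac{d}{dt}\Ps$ into the three terms coming from $(f(u))_t$, $f(u)u_t^2$ and $f(u)u_{tt}u$, substitute \eqref{skyrme} into the last one, and integrate by parts in $r$ until only $\phi,\phi',\phi''$ and the surviving cubic term remain; your accounting of where the $\int \phi(u_t^2-u_r^2)$, $-(\phi/r)_r u^2$, $-\int\phi r^{-2}\sin(2u)u$ and $r\phi''-4\phi'+6\phi/r$ pieces come from matches the paper's computation.

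There is, however, one step that would fail as literally described. After the first integration by parts of $\int\phi f(u)u\,u_{rr}$ you propose to rewrite the \emph{entire} term $\int\partial_r(\phi f(u))\,u u_r$ as $\tfrac12\int\partial_r(\phi f(u))(u^2)_r$ and integrate by parts once more, landing $\partial_r^2(\phi f(u))$ on $u^2$. But by \eqref{f_der} the derivative $(f(u))_r=2\alpha^2 r^{-2}\sin(2u)u_r-4\alpha^2 r^{-3}\sin^2(u)$ already contains $u_r$, so $\partial_r^2(\phi f(u))$ contains $u_{rr}$ (and $\cos(2u)u_r^2$) contributions; you would reintroduce exactly the second radial derivative you are trying to eliminate, and no such term can appear in \eqref{eq:dt_PS}. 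The paper avoids this by integrating by parts only the $u_r$-free pieces: it treats $-\int\phi'f(u)uu_r=-\tfrac12\int\phi'f(u)(u^2)_r$, which yields $\tfrac12\int(\phi''f(u)+\phi'(f(u))_r)u^2$ and in particular the surviving cubic contribution $\alpha^2\int\phi' r^{-2}\sin(2u)u_r u^2$, and separately $4\alpha^2\int\phi r^{-3}\sin^2(u)uu_r=2\alpha^2\int\phi r^{-3}\sin^2(u)(u^2)_r$, which after integration by parts supplies the $-2\phi/r$ part of the coefficient $\phi'-2\phi/r$; the remaining $-2\alpha^2\int\phi r^{-2}\sin(2u)u\,u_r^2$ is \emph{not} integrated further and merges with $-\int\phi f(u)u_r^2$ into the $(\sin(2u)u+2\sin^2(u))u_r^2$ block. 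With that correction (and the sign $-\int\partial_r(\phi f(u)u)u_r$ rather than $+$), your computation closes exactly as in the paper; the rest of your outline, including the role of $u\in\mathcal{E}_0^{S,r\phi}$ in killing the boundary terms, is accurate.
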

\begin{proof}
	Derivating the functional \eqref{P} with respect to time, we have
	\begin{equation}\label{eq:dt_PS_initial}
	\begin{aligned}
	\frac{d}{dt} \Ps
	=&\int_{0}^\infty  \phi \left( f(u) \right)_t u_t u 
	+\int_{0}^\infty  \phi f(u) (u_{tt} u+u_t^2) \\
	=&~{}2\alpha^2 \int_{0}^\infty  \phi \frac{\sin (2u)}{r^2}  u_t^2 u ~{} +\int_{0}^\infty  \phi f(u) u_t^2 
	+\int_{0}^\infty  \phi  f(u) u_{tt} u \\
	:=&P_1+P_2+P_3.
	\end{aligned}
	\end{equation}
	Using \eqref{skyrme}  in $P_3$, we get
	\[
	\begin{aligned}
	P_3
	=&\int_{0}^\infty  \phi  u \left\{
	f(u) u_{rr}+\frac{2}{r} u_r-\dfrac{ \sin(2u)}{r^2} \left[1+\alpha^2\left(u^2_t -u_r^2 +\dfrac{\sin^2(u)}{r^2} \right)
	\right] \right\}.\\
	\end{aligned}
	\]
	Integrating by parts the first term on the RHS, we get
	\begin{equation*}
	\begin{aligned}
	\int_{0}^\infty  \phi   f(u) u u_{rr}
	=& 	-\int_{0}^\infty  \phi'   f(u) u u_{r} -\int_{0}^\infty  \phi (f(u))_r u u_{r} -\int_{0}^\infty  \phi  f(u) u_r^2 \\
	=&~{} \frac12 \int_{0}^\infty  (\phi''   f(u)+\phi'   (f(u))_r ) u^2  -\int_{0}^\infty  \phi (f(u))_r u u_{r} -\int_{0}^\infty  \phi  f(u) u_r^2. 
	\end{aligned}
	\end{equation*}
	Having in mind derivatives in \eqref{f_der}, we get
	\begin{equation}\label{eq:P3b}
	\begin{aligned}
	\int_{0}^\infty  \phi   f(u) u u_{rr}
	=&~{} \frac12 \int_{0}^\infty  \phi''   f(u)u^2
	+ \alpha^2\int_{0}^\infty \phi'   \left(\frac{\sin(2u)u_r}{r^2}-2\frac{\sin^2(u)}{r^3} \right) u^2 \\
	& -2\alpha^2\int_{0}^\infty  \phi \left( \frac{\sin(2u)u_r}{r^2}-2\frac{\sin^2(u)}{r^3}\right) u u_{r} 
	 -\int_{0}^\infty  \phi  f(u) u_r^2 \\
	=&~{} \frac12 \int_{0}^\infty  \phi''   f(u)u^2
	+ \alpha^2\int_{0}^\infty \phi'   \frac{\sin(2u)}{r^2} u_r u^2 -2\alpha^2\int_{0}^\infty \phi'  \frac{\sin^2(u)}{r^3}  u^2 \\
	& -2\alpha^2\int_{0}^\infty  \phi \frac{\sin(2u)}{r^2} u u_{r}^2 +4\alpha^2\int_{0}^\infty  \phi \frac{\sin^2(u)}{r^3} u u_{r} 
	 -\int_{0}^\infty  \phi  f(u) u_r^2 .
	\end{aligned}
	\end{equation}
	Now, integrating by parts the second term  in the RHS of the above line, we obtain
	\begin{equation}\label{eq:P3_penult}
	\begin{aligned}
	4\alpha^2\int_{0}^\infty  \phi \frac{\sin^2(u)}{r^3} u u_{r}
	=& -2\alpha^2\int_{0}^\infty \left (\phi \frac{\sin^2(u)}{r^3}\right)_r u^2\\
	=& -2\alpha^2 \int_{0}^\infty  \phi' \frac{\sin^2(u)}{r^3}u^2 \\
	&-2\alpha^2\int_{0}^\infty  \phi\left( \frac{\sin(2u)u_r}{r^3}-3\frac{\sin^2(u)}{r^4}\right)u^2.
	\end{aligned}
	\end{equation}
	 Then, substituting into 
	 \eqref{eq:P3b},
	 we get
	\[\begin{aligned}
	P_3
	 =&~{} 
	\frac12 \int_{0}^\infty  \phi''   f(u)u^2 
	+\alpha^2\int_{0}^\infty \left(-4\phi' +6\frac{\phi}{r}\right)  \frac{\sin^2(u)}{r^3}  u^2 -\alpha^2 \int_{0}^\infty  \dfrac{\phi}{r}  \dfrac{u \sin (2u) \sin^2(u)}{r^3} \\
		&+ \alpha^2\int_{0}^\infty \left(\phi'-2 \frac{\phi}{r} \right)   \frac{\sin(2u)}{r^2} u_r u^2 
	 -\int_{0}^\infty  \phi \left(\alpha^2\frac{\sin(2u)}{r^2} u + f(u)\right) u_{r}^2 \\
	&-\int_{0}^\infty    \left( \frac{\phi}{r} \right)_r u^2 
	-\int_{0}^\infty  \phi  \dfrac{ \sin(2u)}{r^2} u
	-\alpha^2 \int_{0}^\infty  \phi  \dfrac{ \sin(2u)}{r^2} u u^2_t  .
	\end{aligned}
	\]
	Replacing \eqref{f} and regrouping again, we obtain
	\begin{equation}\label{eq:P3_final}\begin{aligned}
	P_3
	  =&~{} 
	\alpha^2\int_{0}^\infty \left( r\phi'' -4\phi' +6\frac{\phi}{r}\right)  \frac{\sin^2(u)}{r^3}  u^2 -\alpha^2 \int_{0}^\infty  \dfrac{\phi}{r}  \dfrac{u \sin (2u) \sin^2(u)}{r^3} \\
		&+ \alpha^2\int_{0}^\infty \left(\phi'-2 \frac{\phi}{r} \right)   \frac{\sin(2u)}{r^2} u_r u^2 
	-\alpha^2\int_{0}^\infty  \frac{\phi}{r^2} \left( \sin(2u) u +2  \sin^2(u) \right) u_{r}^2 \\
	&+ \int_{0}^\infty     \left(\frac12 \phi''-\left( \frac{\phi}{r} \right)_r \right) u^2 
	-\int_{0}^\infty  \phi  \dfrac{ \sin(2u)}{r^2} u
	-\int_{0}^\infty  \phi  u_{r}^2 
	-\alpha^2 \int_{0}^\infty  \phi  \dfrac{ \sin(2u)}{r^2} u u^2_t . 
	\end{aligned}
	\end{equation}
	Collecting $P_3$ in \eqref{eq:P3_final}, \eqref{eq:dt_PS_initial}, and using \eqref{f}, we get
	\[
	\begin{aligned}
	\frac{d}{dt} \Ps
	=&~{}
	2\alpha^2 \int_{0}^\infty  \phi \frac{\sin (2u)}{r^2}  u_t^2 u +\int_{0}^\infty  \phi f(u) u_t^2 
		+\alpha^2\int_{0}^\infty \left( r\phi'' -4\phi' +6\frac{\phi}{r}\right)  \frac{\sin^2(u)}{r^3}  u^2\\
		& -\alpha^2 \int_{0}^\infty  \dfrac{\phi}{r}  \dfrac{u \sin (2u) \sin^2 (u)}{r^3} \\
		&+ \alpha^2\int_{0}^\infty \left(\phi'-2 \frac{\phi}{r} \right)   \frac{\sin(2u)}{r^2} u_r u^2 
	 -\alpha^2\int_{0}^\infty  \frac{\phi}{r^2} \left( \sin(2u) u +2  \sin^2 (u) \right) u_{r}^2 \\
	&-\int_{0}^\infty    \left( \frac{\phi}{r} \right)_r u^2 
	+\frac12 \int_{0}^\infty  \phi''  u^2
	-\int_{0}^\infty  \phi  \dfrac{\sin (2u)}{r^2} u
	-\int_{0}^\infty  \phi  u_{r}^2 
	-\alpha^2 \int_{0}^\infty  \phi  \dfrac{\sin (2u)}{r^2} u u^2_t.  \\
	\end{aligned}
	\]
	Finally, regrouping  terms, we conclude
		\[
	\begin{aligned}
	\frac{d}{dt} \Ps
		=&~{}
	\alpha^2 \int_{0}^\infty \frac{ \phi}{r^2} \left( \sin (2u)  u  + 2 \sin^2 (u) \right) (u_t^2-u_r^2) + \int_{0}^\infty  \phi  (u_{t}^2-u_r^2)\\
		&+\alpha^2\int_{0}^\infty \left( r\phi'' -4\phi' +6\frac{\phi}{r}\right)  \frac{\sin^2(u)}{r^3}  u^2 -
		\alpha^2 \int_{0}^\infty  \dfrac{\phi}{r}  \dfrac{u \sin (2u) \sin^2(u)}{r^3} \\
		&+ \alpha^2\int_{0}^\infty \left(\phi'-2 \frac{\phi}{r} \right)   \frac{\sin(2u)}{r^2} u_r u^2 
	-\int_{0}^\infty   \left[
					 \left( \frac{\phi}{r} \right)_r  -\frac12   \phi'' 
					\right] u^2
	-\int_{0}^\infty  \phi  \dfrac{\sin (2u)}{r^2} u.
	\end{aligned}
	\]
\end{proof}

\subsection{Virial identities for the Adkins-Nappi Model}\label{Sect:virial_AdkinsNappi}

		\medskip

Let $\rho=\rho(t,r)$ a smooth, weight function, to be chosen later. Similarly to the previous section, for the Adkins-Nappi equation we introduce a suitable functional, 
 as a weighted generalization of the energy  \eqref{energy_AN}, and given by 
		\begin{equation}\label{J}
		\begin{aligned}
%
		 \JJ =& \int_{0}^{\infty} \rho r^2 \left[ u_t^2+u^2_r
		+ 2\dfrac{\sin^2(u)}{r^2}+\dfrac{\left(u- \sin (u) \cos (u) \right)^2 }{r^4} \right],\ \mbox{for each}\  t\in\R .
		\end{aligned}
		\end{equation}
		Recalling Remark \ref{rem:well_defined_spaces}, if $\rho$ is a bounded function, the functional $\JJ$
		 is well-defined for $(u,u_t)\in (\dot{H}^{5/3} \cap\dot{H}^1 \times L^2) (\R^3) $. 
		 The following result describes the time variation of \eqref{J}.

		\begin{lem}[Energy local variations: Adkins-Nappi Model]\label{lem:dtJ}
			For any $t\in \R$, one has
			\begin{align}\label{eq:dtJ}
				\dfrac{d}{dt}\JJ =&~{} \int_{0}^{\infty} \rho_t r^2 \left[ u_t^2+u^2_r+ 2\dfrac{\sin^2(u)}{r^2}
				+\dfrac{\left(u- \sin (u) \cos (u) \right)^2 }{r^4} \right] 
				- 2\int_{0}^{\infty} \rho_r r^2 u_t u_r.
			\end{align}
		\end{lem}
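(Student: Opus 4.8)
The plan is to mimic the argument for \lemref{lem:I'} in the Adkins--Nappi setting: differentiate \eqref{J} in time, substitute the equation \eqref{AdkinsNappi} for the term carrying $u_{tt}$, integrate by parts in the term carrying $u_{rt}$, and check that all the nonlinear potential contributions cancel. First I would differentiate \eqref{J} directly, obtaining the $\rho_t$ integral (which is exactly the first term on the right of \eqref{eq:dtJ}) plus
\[
\int_0^\infty \rho r^2 \Big[ 2 u_t u_{tt} + 2 u_r u_{rt} + \partial_t\Big( \tfrac{2\sin^2 u}{r^2}\Big) + \partial_t\Big( \tfrac{(u-\sin u\cos u)^2}{r^4}\Big)\Big].
\]
Here I would record the two elementary trigonometric identities that do the bookkeeping, namely $\partial_t(2\sin^2 u) = 2\sin(2u)\,u_t$ and, using $\partial_t(u-\sin u\cos u) = (1-\cos 2u)u_t = 2\sin^2 u\, u_t$, that $\partial_t\big((u-\sin u\cos u)^2\big) = 4(u-\sin u\cos u)\sin^2 u\, u_t$.

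Next I would use \eqref{AdkinsNappi}, rewritten (via $1-\cos 2u = 2\sin^2 u$) as
\[
u_{tt} = u_{rr} + \tfrac{2}{r}u_r - \tfrac{\sin(2u)}{r^2} - \tfrac{2(u-\sin u\cos u)\sin^2 u}{r^4},
\]
to replace the $2\rho r^2 u_t u_{tt}$ term. The point is that the two potential pieces produced by $u_{tt}$, namely $-2\rho\sin(2u)u_t$ and $-\tfrac{4\rho(u-\sin u\cos u)\sin^2 u}{r^2}u_t$, cancel exactly against the contributions of $\partial_t(2\sin^2 u/r^2)$ and $\partial_t\big((u-\sin u\cos u)^2/r^4\big)$ computed above. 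What survives is the purely "kinetic/quasilinear" remainder
\[
\text{($\rho_t$ term)} \;+\; 2\int_0^\infty \rho r^2 u_t u_{rr} \;+\; 4\int_0^\infty \rho r\, u_t u_r \;+\; 2\int_0^\infty \rho r^2 u_r u_{rt}.
\]

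Finally I would integrate by parts in the last integral: $2\int_0^\infty \rho r^2 u_r u_{rt} = 2\int_0^\infty \rho r^2 u_r \partial_r u_t = -2\int_0^\infty \partial_r(\rho r^2 u_r)\, u_t$, which expands to $-2\int \rho_r r^2 u_t u_r - 4\int \rho r\, u_t u_r - 2\int \rho r^2 u_t u_{rr}$; the boundary term vanishes because of the $r^2$ factor at the origin and the finite-energy decay of $(u_r,u_t)$ at infinity. Adding this to the surviving remainder, the two $u_{rr}$ integrals and the two $\rho r\, u_t u_r$ integrals cancel, leaving precisely $\text{($\rho_t$ term)} - 2\int_0^\infty \rho_r r^2 u_t u_r$, which is \eqref{eq:dtJ}. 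The only point requiring a little care — and the one I would flag as the main (mild) obstacle — is the cancellation of the nonlinear potential terms, which hinges on using $1-\cos 2u = 2\sin^2 u$ consistently; the integration by parts and the vanishing of boundary terms are routine given the function spaces fixed in \lemref{lem:dtJ} and Remark \ref{rem:well_defined_spaces}.
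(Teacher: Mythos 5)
Your proposal is correct and follows essentially the same route as the paper's proof: differentiate \eqref{J}, use \eqref{AdkinsNappi} to trade the $u_{tt}$ term for $u_{rr}+\tfrac{2}{r}u_r$ so that the potential contributions cancel (via $1-\cos 2u=2\sin^2 u$), and integrate by parts once so that the remaining $u_{rr}$, $u_{rt}$ and $\rho r\,u_tu_r$ terms collapse to $-2\int_0^\infty \rho_r r^2 u_t u_r$. The only (immaterial) difference is that the paper integrates by parts on the $\rho r^2 u_t u_{rr}$ term rather than on $\rho r^2 u_r u_{rt}$.
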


		\begin{proof}Derivating the functional \eqref{J} with respect to
		time and using basic trigonometric identities, we obtain
			\begin{equation}\label{J'_start}
			\begin{aligned}
	 \dfrac{d}{dt}\JJ
	 =& \int_{0}^{\infty} \rho_t r^2 \left[ u_t^2+u^2_r+ 2\dfrac{\sin^2(u)}{r^2}+\dfrac{\left(u- \sin (u) \cos (u) \right)^2 }{r^4} \right]  +2\int_{0}^{\infty} \rho r^2 u_r u_{rt}\\
	 &+ \underbrace{\int_{0}^{\infty} 2 \rho u_t  r^2 \left[ u_{tt}+ \dfrac{ \sin (2u)}{r^2}+\dfrac{ \left(u- \sin (u) \cos (u) \right) \left(1+  \sin^2(u) - \cos^2 u \right)}{r^4} \right] }_{J_1}.
			\end{aligned}
		\end{equation}
Now, using the equation \eqref{AdkinsNappi} and integrating by parts  in $J_1$, we have

\[
\begin{aligned}
J_1
=& 2\int_{0}^{\infty}  \rho u_t  r^2
\left[ u_{rr}+\dfrac{2}{r}u_r \right] \\
=& 4 \int_{0}^{\infty}  \rho  r u_t  u_r  
- 2\int_{0}^{\infty}  \bigg(\rho_r   r^2  u_t+\rho   2r  u_t+\rho   r^2  u_{tr}\bigg) u_{r}  .\\
\end{aligned}
\]
Finally, substituting $J_1$ in \eqref{J'_start}, we get
\begin{equation}\label{J'_ends}
\begin{aligned}
\dfrac{d}{dt}\JJ
=& \int_{0}^{\infty} \rho_t r^2 \left[ u_t^2+u^2_r+ 2\dfrac{\sin^2(u)}{r^2}+\dfrac{\left(u- \sin (u) \cos (u) \right)^2 }{r^4} \right]  \\
&-2\int_{0}^{\infty}  \rho_r  r^2 u_t u_{r}.
\end{aligned}
\end{equation}
This ends the proof of the lemma.
		\end{proof}

\begin{rem}\label{rem:J'}
Similarly to Remark \ref{rem:I'}, using the change of variables $\rho=\phi/r^2,$ the term $r^2$ in $\JJ$ is avoided,  and therefore recovering the functional $E_{AN,\phi}[u]$  \eqref{w_energy_AN}. Furthermore, by Lemma \ref{lem:dtJ} we have the following identity for 
the time variation of $E_{AN,\phi}$:
\begin{equation}\label{J'_wo_r2}
\begin{aligned}
\dfrac{d}{dt}E_{AN,\phi}(t)
=& \int_{0}^{\infty} \phi_t  \left[ u_t^2+u^2_r+ 2\dfrac{\sin^2(u)}{r^2}+\dfrac{\left(u- \sin (u) \cos (u) \right)^2 }{r^4} \right] \\
&-2\int_{0}^{\infty}  \left(\phi_r-2\frac{\phi}{r}\right)  u_t u_{r}.
\end{aligned}
\end{equation}
This relation will be useful in the proof of Theorem \ref{thm:AN_interior}.
\end{rem}

					\noindent


		Now, let $\psi$ and $\phi$ smooth weight functions of $r$, which will be chosen later. We define the functional 
 $\Man$ associated with a sort of momentum, given by
\begin{equation}\label{momentum}
		\Man =\int_{0}^{\infty} \psi u_t u_r.
		\end{equation}
\noindent 
and  the functional $\Ran$, which is the term  that corrects the bad sign of the variation on the functional $\Man$, given by
	\begin{equation}\label{R}
	\Ran
	=\int_{0}^{\infty} \phi u_t u.
	\end{equation}
	\noindent
	The following results show the time variation of these functionals, which  will be used 
	in the proof of Theorem \ref{thm:AN_interior}.
		\begin{lem}
		Let $t\in \R$, $\psi$ be a smooth weight function and $\Man$ as in \eqref{momentum}. Then, if $u\in \mathcal{E}_0^{AN,\psi}$, we have
			\begin{equation}\label{eq:dt_M}
			\begin{aligned}
			\dfrac{d}{dt} \Man 
			=&
			-\frac12 \int_{0}^{\infty} \psi'  u_t^2 
			- \int_{0}^{\infty}  \left(\frac{\psi'}{2} -\dfrac{2\psi }{r}\right)u_r^2
			-\frac12 \int_{0}^{\infty} \left( 2 \frac{\psi}{r} -\psi'\right)\frac{\sin^2(u) }{r^2} \\
			&-\frac12\int_{0}^{\infty} \left(  4 \frac{\psi}{r} -\psi' \right) \frac{( u- \sin (u) \cos (u))^2}{r^4}.
			\end{aligned}
			\end{equation}
		\end{lem}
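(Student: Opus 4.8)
The plan is to follow the same template as Lemma~\ref{lem:K'} (the Skyrme momentum identity), which is in fact somewhat lighter here because the functional \eqref{momentum} carries no $f(u)$ factor. First note that $\Man$ is well defined for $u\in\mathcal{E}_0^{AN,\psi}$, since by Cauchy--Schwarz $\big|\int_0^\infty\psi\,u_tu_r\big|\le E_{AN,\psi}[u](t)<\infty$. I would then differentiate \eqref{momentum} in time and split
\[
\frac{d}{dt}\Man=\int_{0}^{\infty}\psi\,u_{tt}u_r+\int_{0}^{\infty}\psi\,u_tu_{tr}.
\]
In the last integral I write $u_tu_{tr}=\tfrac12\partial_r(u_t^2)$ and integrate by parts, obtaining $-\tfrac12\int_0^\infty\psi'u_t^2$; the endpoint term vanishes at $r=0$ because $u(0,t)\equiv 0$ forces $u_t(0,t)=0$, and at $r=\infty$ by the decay encoded in $(u,u_t)\in\mathcal{E}_0^{AN,\psi}$. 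The same boundary reasoning is applied silently in every integration by parts below.

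Next I substitute \eqref{AdkinsNappi}, written as
\[
u_{tt}=u_{rr}+\tfrac2r u_r-\tfrac{\sin(2u)}{r^2}-\tfrac{(u-\sin u\cos u)(1-\cos 2u)}{r^4},
\]
into $\int_0^\infty\psi\,u_{tt}u_r$, producing four pieces. The term $\int_0^\infty\psi u_{rr}u_r=\tfrac12\int_0^\infty\psi\,\partial_r(u_r^2)$ integrates by parts to $-\tfrac12\int_0^\infty\psi'u_r^2$, and the $\tfrac2r u_r$ term is simply $2\int_0^\infty\tfrac{\psi}{r}u_r^2$. For the $\sin(2u)$ term I use the primitive $\sin(2u)\,u_r=\partial_r(\sin^2u)$ and integrate by parts against the weight $\psi/r^2$, which produces a multiple of $\int_0^\infty\big(\tfrac{\psi'}{r^2}-\tfrac{2\psi}{r^3}\big)\sin^2u$. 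For the quartic term I use $1-\cos 2u=2\sin^2u$ together with the identity $\partial_r\!\big[(u-\sin u\cos u)^2\big]=2(u-\sin u\cos u)(1-\cos 2u)\,u_r$, so that this piece equals $-\tfrac12\int_0^\infty\tfrac{\psi}{r^4}\,\partial_r[(u-\sin u\cos u)^2]$, and integrating by parts against $\psi/r^4$ gives a multiple of $\int_0^\infty\big(\tfrac{\psi'}{r^4}-\tfrac{4\psi}{r^5}\big)(u-\sin u\cos u)^2$.

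Finally I collect the coefficients of $u_t^2$, $u_r^2$, $\sin^2u/r^2$ and $(u-\sin u\cos u)^2/r^4$ and regroup, using $\tfrac{\psi'}{r^2}-\tfrac{2\psi}{r^3}=\tfrac1{r^2}\big(\psi'-\tfrac{2\psi}{r}\big)$ and $\tfrac{\psi'}{r^4}-\tfrac{4\psi}{r^5}=\tfrac1{r^4}\big(\psi'-\tfrac{4\psi}{r}\big)$, which leads to \eqref{eq:dt_M}. There is no genuine analytic obstacle: the whole computation is an exact manipulation, and the only steps that deserve attention are picking the correct trigonometric primitives for the two potential terms (so that $\sin^2u$ is paired with the weight $\psi/r^2$ and $(u-\sin u\cos u)^2$ with $\psi/r^4$), tracking the signs correctly, and checking that every boundary contribution at $r=0$ and $r=\infty$ vanishes under the hypothesis $(u,u_t)\in\mathcal{E}_0^{AN,\psi}$ and the regularity of $u$ at the origin.
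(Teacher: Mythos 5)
Your argument is exactly the paper's proof: split $\tfrac{d}{dt}\Man$ into $\int_0^\infty\psi u_{tt}u_r$ plus $\tfrac12\int_0^\infty\psi\,\partial_r(u_t^2)$, integrate the latter by parts, substitute \eqref{AdkinsNappi}, and convert the two potential terms via the primitives $(\sin^2 u)_r=\sin(2u)u_r$ and $\partial_r\big[(u-\sin u\cos u)^2\big]=2(u-\sin u\cos u)(1-\cos 2u)u_r$ before a final integration by parts against the weights $\psi/r^2$ and $\psi/r^4$. The only place you hedge is in writing ``a multiple of'' for the resulting coefficients, which is precisely where care is needed: since $\sin(2u)u_r=(\sin^2u)_r$ carries no factor $\tfrac12$, a careful execution of your plan gives $-\int_0^\infty\big(2\psi/r-\psi'\big)\sin^2(u)/r^2$ for that term, i.e.\ twice the coefficient displayed in \eqref{eq:dt_M} (the paper's own proof inserts a spurious $\tfrac12$ when it rewrites $M_{22}$ as $-\tfrac12\int_0^\infty\tfrac{\psi}{r^2}(\sin^2 u)_r-\cdots$), so you should carry the constants explicitly rather than assert that the computation lands on the stated identity.
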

		\begin{proof}
			Just derivating the functional \eqref{momentum} with respect to time,
			we obtain
			\[
			\begin{aligned}
			\dfrac{d}{dt} \Man
			=&
			\int_{0}^{\infty} \psi (u_{tt} u_r+u_t u_{rt}) \\
			=& 
			-\frac12 \int_{0}^{\infty} \psi'  u_t^2 
			+\int_{0}^{\infty} \psi  u_{tt} u_r 
			:=M_1+M_2.
			\end{aligned}
			\]
			For $M_2$, using \eqref{AdkinsNappi} and integrating by parts, we have
			\[
			\begin{aligned}
			M_2
			=& \int_{0}^{\infty} \psi  u_r \left(u_{rr}+\dfrac{2}{r}u_r\right)-\int_{0}^{\infty} \psi u_r \left(\dfrac{ \sin(2u)}{r^2}+
			\dfrac{\left( u- \sin (u) \cos (u)\right) \left(1-\cos (2u) \right)}{r^4} \right)\\
			=&- \int_{0}^{\infty}  \left(\frac{\psi'}{2} -\dfrac{2\psi }{r}\right)u_r^2
			-\int_{0}^{\infty} \psi  u_r \left(\dfrac{ \sin(2u)}{r^2}+
			\dfrac{\left( u- \sin (u) \cos (u)\right) \left(1-\cos (2u) \right)}{r^4} \right)\\
			=&{}~ M_{21}+M_{22}.
			\end{aligned}
			\]
			With respect to $M_{22}$, note that rewriting it and integrating by parts, we obtain
			\[
			\begin{aligned}
			M_{22}
			=&
			-\frac12 \int_{0}^{\infty} \frac{\psi}{r^2}  (\sin^2(u))_r
			-\frac12\int_{0}^{\infty} \frac{\psi}{r^4}
			\left(( u- \sin (u) \cos (u))^2\right)_r  \\
				=&
			-\frac12 \int_{0}^{\infty} \left( 2 \frac{\psi}{r} -\psi' \right)\frac{\sin^2(u) }{r^2} 
			-\frac12\int_{0}^{\infty} \left(  4 \frac{\psi}{r} -\psi'  \right) \frac{( u- \sin (u) \cos (u))^2}{r^4}.
			%
			%
			\end{aligned}
			\]
			Finally, collecting $M_1$, $M_{21}$,  and $M_{22}$,  we get
			\[
			\begin{aligned}
			\dfrac{d}{dt}\Man 
			=&
			-\frac12 \int_{0}^{\infty} \psi'  u_t^2 
			- \int_{0}^{\infty}  \left(\frac{\psi'}{2} -\dfrac{2\psi }{r}\right)u_r^2\\
			&-\frac12 \int_{0}^{\infty} \left( 2 \frac{\psi}{r} -\psi' \right)\frac{\sin^2(u) }{r^2} 
			-\frac12\int_{0}^{\infty} \left(  4 \frac{\psi}{r} -\psi' \right) \frac{( u- \sin (u) \cos (u))^2}{r^4}.
			\end{aligned}
			\]
			This ends the proof of this lemma.
		\end{proof}
	
	\begin{lem}
	Let $t\in \R$, $\phi$ be a smooth weight function and $\Ran$ as in \eqref{R}. Then, if $u\in \mathcal{E}_0^{AN,r \phi}$, we have
		\begin{equation}\label{eq:dt_R}
		\begin{aligned}
		\dfrac{d}{dt} \Ran 
		=& 
		 \int_{0}^{\infty} \phi  u_t^2 
-\int_{0}^{\infty}  \left[  \phi' r-\phi  -\frac{r^2\phi_{rr}}{2}\right]\frac{u^2}{r^2}
-\int_{0}^{\infty} \phi u_r^2\\
&-\int_{0}^{\infty}  \left(\dfrac{\phi}{r^2}\right) u \sin(2u)
-\int_{0}^{\infty} \left(
\dfrac{\phi}{r^4}\right) u \left( u- \sin (u) \cos (u) \right) (1-\cos (2u)).
		\end{aligned}
		\end{equation}
	\end{lem}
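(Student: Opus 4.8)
The plan is to compute $\frac{d}{dt}\Ran$ directly by differentiating the defining integral \eqref{R} in time and then substituting the Adkins-Nappi equation \eqref{AdkinsNappi} to eliminate $u_{tt}$, in exact parallel with the proofs of Lemma \ref{lem:dtJ} and the lemma for $\Man$ just given. Concretely, differentiating \eqref{R} under the integral sign gives
\[
\frac{d}{dt}\Ran = \int_0^\infty \phi\, u_{tt}\, u + \int_0^\infty \phi\, u_t^2,
\]
so the first term $\int_0^\infty \phi\, u_t^2$ already matches the output, and the entire task reduces to handling $R_1:=\int_0^\infty \phi\, u\, u_{tt}$.

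For $R_1$ I would replace $u_{tt}$ using \eqref{AdkinsNappi}, namely $u_{tt}=u_{rr}+\frac2r u_r-\frac{\sin(2u)}{r^2}-\frac{(u-\sin u\cos u)(1-\cos 2u)}{r^4}$, which splits $R_1$ into a Laplacian-type piece $\int_0^\infty \phi\, u\,(u_{rr}+\frac2r u_r)$ and two ``potential'' pieces $-\int_0^\infty \frac{\phi}{r^2}u\sin(2u)$ and $-\int_0^\infty \frac{\phi}{r^4}u(u-\sin u\cos u)(1-\cos 2u)$; the last two already appear verbatim in \eqref{eq:dt_R}, so no further work is needed on them. The remaining radial piece I would handle by integration by parts twice: first write $\int_0^\infty \phi\, u\, u_{rr} = -\int_0^\infty (\phi u)_r u_r = -\int_0^\infty \phi u_r^2 - \int_0^\infty \phi' u\, u_r$, and then integrate $\int_0^\infty \phi' u\, u_r = \frac12\int_0^\infty \phi' (u^2)_r = -\frac12\int_0^\infty \phi'' u^2$; similarly $\int_0^\infty \frac{2\phi}{r} u\, u_r = \int_0^\infty \frac{\phi}{r}(u^2)_r = -\int_0^\infty (\frac{\phi}{r})_r u^2$. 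Collecting these, the $u_r^2$ terms assemble into $-\int_0^\infty \phi u_r^2$ and the $u^2$ terms into $\int_0^\infty \big(\frac12\phi'' - (\frac{\phi}{r})_r\big)u^2$; rewriting $(\frac{\phi}{r})_r = \frac{\phi' r - \phi}{r^2}$ and combining with $\frac12\phi'' = \frac{r^2\phi_{rr}}{2r^2}$ produces exactly $-\int_0^\infty \big[\phi' r - \phi - \frac{r^2\phi_{rr}}{2}\big]\frac{u^2}{r^2}$, which matches \eqref{eq:dt_R}.

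The only subtlety worth flagging is justifying that all the boundary terms at $r=0$ and $r=\infty$ in the integrations by parts vanish; this is where the hypothesis $u\in\mathcal{E}_0^{AN,r\phi}$ enters. At $r=0$ the constraint $u_0(0)=0$ together with the regularity/finite-energy bound (and Remark \ref{rem:energy_Linfty}) forces $u$ and the relevant weighted quantities to vanish fast enough, while at $r=\infty$ the finiteness of the $r\phi$-weighted energy controls the decay of $\phi u u_r$ and $\phi' u^2$, $\frac{\phi}{r}u^2$. I would state these vanishing facts once and invoke them at each integration by parts rather than belaboring them. No real obstacle is expected here — the proof is a bookkeeping exercise of the same type as the preceding lemmas — but the arithmetic of merging the $u^2$ coefficients into the single bracketed expression $\phi' r - \phi - \frac{r^2\phi_{rr}}{2}$ is the one place where a sign slip is easy, so I would double-check that step.
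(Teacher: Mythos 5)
Your proposal is correct and follows essentially the same route as the paper: differentiate \eqref{R} in time, substitute \eqref{AdkinsNappi} for $u_{tt}$, integrate the radial terms by parts twice to convert $\int \phi\, u\, u_{rr}$ and $\int \tfrac{2\phi}{r}u\,u_r$ into $-\int\phi u_r^2+\int\bigl(\tfrac12\phi''-(\tfrac{\phi}{r})_r\bigr)u^2$, and regroup the $u^2$ coefficient into $-\bigl[\phi' r-\phi-\tfrac{r^2\phi_{rr}}{2}\bigr]r^{-2}$. The sign bookkeeping you flag as the delicate step checks out, and your remark on the vanishing of boundary terms is consistent with the paper's (implicit) use of $u\in\mathcal{E}_0^{AN,r\phi}$.
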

	\begin{proof}
		Just derivating the functional \eqref{R} with respect to time,
		we obtain
		\begin{equation}\label{eq:R1+R2+R3}
		\begin{aligned}
		\dfrac{d}{dt}\Ran 
		=&
		 \int_{0}^{\infty} \phi  u_t^2 
		+\int_{0}^{\infty} \phi  u_{tt} u 
		:=R_1+R_2.
		\end{aligned}
		\end{equation}
		For $R_2$, using \eqref{AdkinsNappi} and integrating by parts, we get
		\[
		\begin{aligned}
		R_2
		=& \int_{0}^{\infty} \frac{\phi_{rr}}{2} u^2 -\int_{0}^{\infty} \phi u_r^2 
		- \int_{0}^{\infty}  \left(\dfrac{\phi}{r}\right)_r u^2\\
		&-\int_{0}^{\infty} \phi u \left(\dfrac{ \sin(2u)}{r^2}+
		\dfrac{\left( u- \sin (u) \cos (u) \right) (1-\cos (2u)) }{r^4} \right).
		\end{aligned}
		\]
		Regrouping  terms, we obtain
		\begin{equation}\label{eq:R3}
		\begin{aligned}
	R_2	=& \int_{0}^{\infty}  \left(\frac{\phi_{rr}r^2}{2} -r \phi_r+\phi  \right)\frac{u^2}{r^2}-\int_{0}^{\infty} \phi u_r^2\\
		&-\int_{0}^{\infty}  \left(\dfrac{\phi}{r^2} u \sin(2u)+
			\dfrac{\phi}{r^4}\left( u- \sin (u) \cos (u) \right) (1-\cos (2u)) u \right).\\
		%
		\end{aligned}
		\end{equation}
		Then, substituting \eqref{eq:R3} in \eqref{eq:R1+R2+R3}, we obtain
		\[
		\begin{aligned}
		\dfrac{d}{dt}\Ran
		=& 
		 \int_{0}^{\infty} \phi  (u_t^2-u_r^2) 
-\int_{0}^{\infty}  \left[  \phi' r-\phi  -\frac{r^2\phi_{rr}}{2}\right]\frac{u^2}{r^2}\\
&
-\int_{0}^{\infty}  \dfrac{\phi}{r^2} u \sin(2u)
-\int_{0}^{\infty} 
\dfrac{\phi}{r^4} u \left( u- \sin (u) \cos (u) \right) (1-\cos (2u)).
		\end{aligned}
		\]
		This concludes the proof of the lemma.
	\end{proof}
	
		\section{Decay in  exterior light cones for the Skyrme and Adkins-Nappi models}\label{Sect:4}

This section deals with the proof of Theorem \ref{thm:S_exterior} for the Skyrme and Adkins-Nappi equations. 
%
		In what follows, fix $\sigma\in \R$ such that $|\sigma|>1$. Recalling the identity \eqref{derI1} and using the weight function $\varphi=\varphi\left(\dfrac{r+\sigma t}{L}\right)$, we get
			\begin{equation}
			\begin{aligned}
				\dfrac{d}{dt}\II=&~{}\dfrac{\sigma}{L}\int_{0}^{\infty} \varphi' r^2 \left[ \left(1+\dfrac{2 \alpha^2 \sin^2(u)}{r^2} \right) (u_t^2+u^2_r)+ 2\dfrac{\sin^2(u)}{r^2}+\dfrac{\alpha^2 \sin^4(u)}{r^4} \right] \\
				&-\frac{1}{L}\int_{0}^{\infty} \varphi' r^2  \left(1+\dfrac{2 \alpha^2 \sin^2(u)}{r^2} \right) 2 u_t u_r .\label{u_t u_r}
			\end{aligned}
		\end{equation}
		Furthermore, from Lemma \ref{lem:dtJ}, we have:
\begin{equation}
\begin{aligned}
\dfrac{d}{dt}\JJ =&~{} \dfrac{\sigma}{L} \int_{0}^{\infty} \varphi' r^2 \left[ u_t^2+u^2_r+ 2\dfrac{\sin^2(u)}{r^2}+\dfrac{\left(u- \sin (u) \cos (u) \right)^2 }{r^4} \right] \\
&- \dfrac{1}{L}\int_{0}^{\infty} \varphi' 2r^2 u_t u_r .
\label{u_t u_r AdkinsNappi}
\end{aligned}
\end{equation}
			 Now, we are ready to prove a first virial estimate.\footnote{We denote by $a \lesssim_{h} b$ if there is a constant $C$ depending on $h$ such that $a\leq C(h) b$. }

			\begin{lem}
			 Let $L>0$, $\sigma=-(1+b)<-1$, and $\rho=\varphi=\tanh\left(\dfrac{r+\sigma t}{L}\right)$. Then  
			 \begin{enumerate}
				\item 
				\begin{equation}
					\frac{d}{dt}\II\lesssim_{L,b} -\int_{0}^{\infty} \varphi' r^2 \left[
					\left(1+\dfrac{2 \alpha^2 \sin^2(u)}{r^2} \right) (u_t^2+u^2_r)+ 2\dfrac{\sin^2(u)}{r^2}+\dfrac{\alpha^2 \sin^4(u)}{r^4} \right].
				 \label{sim_norm_S}
				\end{equation}
			\item 
			\begin{equation}
			\frac{d}{dt}\JJ \lesssim_{L,b} -\int_{0}^{\infty} \rho' r^2 \left[ u_t^2+u^2_r+ 2\dfrac{\sin^2(u)}{r^2}+\dfrac{\left(u- \sin (u) \cos (u) \right)^2 }{r^4} \right].
				\label{sim_norm}
				\end{equation}
			\end{enumerate}
			\end{lem}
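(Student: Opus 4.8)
The strategy is to substitute the specific choice $\varphi=\rho=\tanh\left(\frac{r+\sigma t}{L}\right)$ into the identities \eqref{u_t u_r} and \eqref{u_t u_r AdkinsNappi} and to show that the ``bad'' cross term involving $u_t u_r$ can be absorbed into the ``good'' quadratic-form term, using that $\sigma=-(1+b)$ has modulus strictly larger than $1$. First I would note that for this weight $\varphi_t=\frac{\sigma}{L}\varphi'$ and $\varphi_r=\frac{1}{L}\varphi'$, and that $\varphi'=\operatorname{sech}^2\left(\frac{r+\sigma t}{L}\right)>0$ everywhere, so the integrand density $\varphi' \geq 0$ defines a genuine (time-dependent) weighted $L^2$ pairing. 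The key algebraic observation is that, pointwise,
\[
\left(1+\tfrac{2\alpha^2\sin^2 u}{r^2}\right)\left[\sigma(u_t^2+u_r^2)-2u_t u_r\right]
\]
is, for each fixed $r,t$, a quadratic form in $(u_t,u_r)$ with matrix $\left(1+\tfrac{2\alpha^2\sin^2 u}{r^2}\right)\begin{psmallmatrix}\sigma & -1\\ -1 & \sigma\end{psmallmatrix}$, whose eigenvalues are $\left(1+\tfrac{2\alpha^2\sin^2 u}{r^2}\right)(\sigma\mp 1)$. Since $\sigma<-1$ we have $\sigma+1<0$ and $\sigma-1<0$, so this form is negative definite, uniformly in $(r,t)$, with the largest eigenvalue bounded above by $(\sigma+1)=-b<0$ (using $1+\tfrac{2\alpha^2\sin^2 u}{r^2}\geq 1$). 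Hence
\[
\frac{\sigma}{L}\varphi' r^2\left(1+\tfrac{2\alpha^2\sin^2 u}{r^2}\right)(u_t^2+u_r^2)-\frac{2}{L}\varphi' r^2\left(1+\tfrac{2\alpha^2\sin^2 u}{r^2}\right)u_t u_r\leq -\frac{b}{L}\varphi' r^2\left(1+\tfrac{2\alpha^2\sin^2 u}{r^2}\right)(u_t^2+u_r^2),
\]
and the remaining terms in \eqref{u_t u_r} — those multiplying $2\frac{\sin^2 u}{r^2}$ and $\frac{\alpha^2\sin^4 u}{r^4}$ — come with the factor $\frac{\sigma}{L}\varphi'=-\frac{1+b}{L}\varphi'\leq 0$. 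Collecting these bounds, every term in $\frac{d}{dt}\II$ is bounded above by a negative multiple (depending only on $L$ and $b$) of the corresponding term in the energy density, which is exactly \eqref{sim_norm_S}.

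**The Adkins-Nappi case.** For part (2) the argument is identical but simpler because there is no factor $f(u)=1+\tfrac{2\alpha^2\sin^2 u}{r^2}$: from \eqref{u_t u_r AdkinsNappi} with $\rho=\tanh\left(\frac{r+\sigma t}{L}\right)$ one has $\rho_t=\frac{\sigma}{L}\rho'$, $\rho_r=\frac{1}{L}\rho'$, $\rho'>0$, and the cross term yields the quadratic form $\frac{1}{L}\rho' r^2\bigl[\sigma(u_t^2+u_r^2)-2u_t u_r\bigr]$ with eigenvalues $\frac{1}{L}\rho' r^2(\sigma\mp 1)$, again both negative and bounded above by $-\frac{b}{L}\rho' r^2$. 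The potential-energy terms carry the factor $\frac{\sigma}{L}\rho'=-\frac{1+b}{L}\rho'<0$. So once more $\frac{d}{dt}\JJ\lesssim_{L,b}-\int_0^\infty \rho' r^2[\,\cdots]$, which is \eqref{sim_norm}.

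**Main obstacle.** There is no deep obstacle here: the whole point is the elementary linear-algebra fact that $\begin{psmallmatrix}\sigma & -1\\ -1 & \sigma\end{psmallmatrix}$ is negative definite precisely when $\sigma<-1$, which is why the light-cone parameter enters as $\sigma=-(1+b)$ with $b>0$. The only thing to be careful about is the bookkeeping of the prefactors $\sigma/L$ versus $1/L$ coming from $\varphi_t=(\sigma/L)\varphi'$ and $\varphi_r=(1/L)\varphi'$ in \eqref{derI1}–\eqref{eq:dtJ}, and the fact that $f(u)\geq 1$ so the positive $r$-dependent correction to $f(u)$ only strengthens the negativity rather than spoiling it. A minor technical point worth recording is that $\varphi'$ decays like $e^{-2|r+\sigma t|/L}$, so all integrals are absolutely convergent for $(u,u_t)$ of finite energy, justifying the integration by parts already used in Lemmas \ref{lem:I'} and \ref{lem:dtJ}; this exponential localization is also what later produces the weight $e^{-c_0|r+\sigma t|}$ in \eqref{integrability0_S_ext}.
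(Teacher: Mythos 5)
Your proposal is correct and follows essentially the same route as the paper: the paper controls the cross term via the Cauchy--Schwarz inequality $|2u_tu_r|\le u_t^2+u_r^2$ and then uses $\sigma+1=-b<0$, which is exactly your eigenvalue computation for the matrix $\begin{psmallmatrix}\sigma & -1\\ -1 & \sigma\end{psmallmatrix}$ phrased in quadratic-form language. The bookkeeping of the prefactors $\sigma/L$ and $1/L$, the use of $f(u)\ge 1$, and the sign of the potential terms all match the paper's argument.
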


			\begin{proof}
			 Firstly we prove \eqref{sim_norm_S}. Focusing on the last term in the RHS of  \eqref{u_t u_r},
			note that, if $\varphi'> 0$, then using a Cauchy-Schwarz inequality, we have
			\begin{align*}
				\left|\int_{0}^{\infty} \varphi' r^2  \left(1+\dfrac{2 \alpha^2 \sin^2(u)}{r^2} \right) 2 u_t u_r \right|
				\leq \int_{0}^{\infty} \varphi' r^2  \left(1+\dfrac{2 \alpha^2 \sin^2(u)}{r^2} \right) (u_t^2+ u_r^2).
			\end{align*}
			Therefore, if $b>0$, $\sigma=-(1+b)<-1$, and $\varphi=\tanh$, we have from \eqref{u_t u_r}
			\[
				\begin{aligned}
					\dfrac{d}{dt}\II \leq &~{}
					\dfrac{\sigma}{L}\int_{0}^{\infty} \varphi' r^2 \left[ \left(1+\dfrac{2 \alpha^2 \sin^2(u)}{r^2} \right) (u_t^2+u^2_r)+ 2\dfrac{\sin^2(u)}{r^2}+\dfrac{\alpha^2 \sin^4(u)}{r^4} \right] \\
					&+\frac{1}{L}\int_{0}^{\infty} \varphi' r^2  \left(1+\dfrac{2 \alpha^2 \sin^2(u)}{r^2} \right) (u_t^2+ u_r^2) .
%
				\end{aligned}
				\]
			Consequently, we obtain \eqref{sim_norm_S}
				\begin{equation*}\label{intermedio_S}
				\begin{aligned}
					\dfrac{d}{dt}\II
					\lesssim_{L,b} &~{} -\int_{0}^{\infty} |\varphi'| r^2  
											 \left[ 
												\left(1+\dfrac{2 \alpha^2 \sin^2(u)}{r^2} \right) (u_t^2+ u_r^2)
												+2\dfrac{\sin^2(u)}{r^2}+\dfrac{\alpha^2 \sin^4(u)}{r^4}
											 \right]  .
				\end{aligned}
				\end{equation*}

\medskip 

The proof of \eqref{sim_norm} proceeds in a similar way. Only note that the last term in \eqref{u_t u_r AdkinsNappi} verifies the following inequality
\begin{align*}
\left|\int_{0}^{\infty} \varphi' r^2  2 u_t u_r \right|
\leq \int_{0}^{\infty} \varphi' r^2 (u_t^2+ u_r^2), \mbox{ for } \varphi'> 0,
\end{align*}
the rest of the proof follows the same lines as in the Skyrme case and hence, for the sake of 
simplicity, we do not show it here.\\

Finally, we can observe that integrating in time on \eqref{sim_norm_S} and \eqref{sim_norm}, we have proved \eqref{integrability0_S_ext} in Theorem \ref{thm:S_exterior}.
\end{proof}


\subsection{Proof of Theorem \ref{thm:S_exterior}:  Skyrme and Adkins-Nappi equations} 
Firstly, we focus on the Skyrme case.  It only remains to prove \eqref{decay0_S_ext}.
We must to show decay in the right hand side region, namely $((1+b)t,+\infty)$, $b>0$.
Now we choose $\varphi(r)=\frac{1}{2}\left(1+\tanh(r)\right)$, $\sigma=-(1+b)$, and  $\tilde{\sigma}=-(1+b/2)$ with $b>0$. Consider the modified energy functional, for $t\in[2,t_0]$:

\begin{align*}
&\mathcal{I}_{S,t_0}(t):= \\
&\dfrac{1}{2} \int_{0}^{\infty} \varphi\left(\dfrac{r+\sigma t_0-\tilde{\sigma}(t_0-t)}{L}\right) r^2 \left[ \left(1+\dfrac{2 \alpha^2 \sin^2(u)}{r^2} \right) (u_t^2+u^2_r)+ 2\dfrac{\sin^2(u)}{r^2}+\dfrac{\alpha^2 \sin^4(u)}{r^4} \right].
\end{align*}

\medskip
\noindent
Note that $\sigma<\tilde{\sigma}<0$. From Lemma \ref{lem:I'} and proceeding exactly as in \eqref{sim_norm_S}, we have
{\small
\[
\begin{aligned}
\frac{d }{dt}\mathcal{I}_{S,t_0}(t) \lesssim_{b,L}& \\
 -\int_{0}^{\infty}  \mbox{sech}^2 &\left(\dfrac{r+\sigma t_0-\tilde{\sigma}(t_0-t)}{L}\right)r^2
\left[ 
\left(1+\dfrac{2 \alpha^2 \sin^2(u)}{r^2} \right) (u_t^2+u^2_r)+ 2\dfrac{\sin^2(u)}{r^2}+\dfrac{\alpha^2 \sin^4(u)}{r^4}
\right]\\
 \leq& 0,
\end{aligned}
\]
}
\medskip
\noindent
which means that the new functional $\mathcal{I}_{S,t_0}$ is decreasing in $[2,t_0]$. Therefore, we have
\begin{align*}
\int_{2}^{t_0} \dfrac{d}{dt} \mathcal{I}_{S,t_0}(t) dt= \mathcal{I}_{S,t_0}(t_0)-\mathcal{I}_{S,t_0}(2)\leq 0\implies
\ \mathcal{I}_{S,t_0}(t_0)\leq\mathcal{I}_{S,t_0}(2).
\end{align*}
\noindent
On the other hand, since $\lim_{x\to -\infty} \varphi (x)=0,$  we have
{\small
\begin{align*}
\limsup_{t\to\infty} \int_{0}^{\infty} \varphi\left(\dfrac{r-\beta t-\gamma}{L}\right)r^2\left[ 
\left(1+\dfrac{2 \alpha^2 \sin^2(u)}{r^2} \right) (u_t^2+u^2_r)+ 2\dfrac{\sin^2(u)}{r^2}+\dfrac{\alpha^2 \sin^4(u)}{r^4}
\right](\nu,r)=0,\\
\end{align*}}
\noindent
for $\beta,\gamma,\nu>0$ fixed. This yields
{\small
\[
\begin{aligned}
0 & \leq \int_{0}^{\infty} \varphi\left(\dfrac{r-(1+b)t_0}{L}\right)r^2\left[ \left(1+\dfrac{2 \alpha^2 \sin^2(u)}{r^2} \right) (u_t^2+u^2_r)+ 2\dfrac{\sin^2(u)}{r^2}+\dfrac{\alpha^2 \sin^4(u)}{r^4} \right](t_0,r)\\
& \leq  \int_{0}^{\infty} \varphi\left(\dfrac{r- \frac b2t_0-(2+b)}{L}\right)r^2\left[\left(1+\dfrac{2 \alpha^2 \sin^2(u)}{r^2} \right) (u_t^2+u^2_r)+ 2\dfrac{\sin^2(u)}{r^2}+\dfrac{\alpha^2 \sin^4(u)}{r^4}\right](2,r),
\end{aligned}
\]
}
\noindent
which implies, 
\[
\begin{aligned}
\limsup_{t\to\infty}\int_{0}^{\infty} \varphi\left(\dfrac{r-(1+b)t}{L}\right)r^2
\bigg[ 
\left(1+\dfrac{2 \alpha^2 \sin^2(u)}{r^2} \right) (u_t^2+u^2_r)
\quad \quad \quad \quad \quad \quad \quad&\\+ 2\dfrac{\sin^2(u)}{r^2}+\dfrac{\alpha^2 \sin^4(u)}{r^4}
\bigg](t,r)dr&=0.\\
\end{aligned}
\]
This means that the energy over $R(t)$ (see \eqref{S}) converges to zero, implying \eqref{decay0_S_ext} and then we conclude the Skyrme case. 

\medskip
\noindent
For the Adkins-Nappi case, the proof is analogous but this time considering the modified energy functional
\begin{align*}
\mathcal{I}_{AN,t_0}(t):=  \int_{0}^{\infty} \rho\left(\dfrac{r+\sigma t_0-\tilde{\sigma}(t_0-t)}{L}\right) r^2
\left[ u_t^2+u^2_r+ 2\dfrac{\sin^2(u)}{r^2}+\dfrac{\left(u- \sin (u) \cos (u) \right)^2 }{r^4} \right],
\end{align*}
and repeating the same steps as in the Skyrme case. This concludes the proof of Theorem \ref{thm:AN_exterior}.

			\medskip
\section{Decay of weighted energies}\label{Sect:5}

Firstly, we study the growth rate of the modified energies introduced in \eqref{w_energy_S} and \eqref{w_energy_AN}.

					\subsection{Growth rate for the modified energy in the Skyrme and Adkins-Nappi equations}
	
	In this section we study the growth rate for the power type weighted energy of the Skyrme and Adkins-Nappi equations
	
	\begin{prop}\label{prop:time_rate_skyrme}
	Let $u$ a global solution of \eqref{skyrme} (or \eqref{AdkinsNappi}) such that $u\in \overset{n}{\underset{i=2}{\bigcap}} \mathcal{E}_{0}^{X,r^i}$, 
	for $X=S$ or $X=AN$. Then the corresponding weighted energy satisfies
	\begin{equation*}
	E_{X,r^n}[u](t)=O(t^{n-2}),
	\end{equation*}
	where $E_{X,r^n}[u](t)$ is given in  \eqref{w_energy_S} and  \eqref{w_energy_AN}, respectively .
	\end{prop}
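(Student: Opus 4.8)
The plan is to prove the estimate by induction on $n$, propagating the growth bound from the weight $r^{n-1}$ to the weight $r^{n}$ by means of the energy identities already established. The mechanism is simple: with a \emph{time-independent} polynomial weight $\phi(r)=r^{m}$ the term $\int\phi_t[\,\cdots]$ in \eqref{eq:I_wo_r2} (resp.\ \eqref{J'_wo_r2}) vanishes, so $\tfrac{d}{dt}E_{X,r^{m}}$ reduces to the cross term in $u_tu_r$ weighted by $\phi'-2\phi/r=(m-2)r^{m-1}$; by Cauchy--Schwarz this is controlled by $(m-2)\,E_{X,r^{m-1}}$, and integrating in time costs exactly one power of $t$.

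\emph{Base case.} For $m=2$ one has $E_{X,r^2}[u](t)=E_X[u](t)$, the conserved energy \eqref{energy_S}--\eqref{energy_AN}, which is finite since $u\in\mathcal{E}_0^{X,r^2}=\mathcal{E}_0^X$; hence $E_{X,r^2}[u](t)=O(1)=O(t^{0})$.

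\emph{Inductive step.} Suppose $2\le m<n$ and $E_{X,r^{m}}[u](t)=O(t^{m-2})$. Applying \eqref{eq:I_wo_r2} for $X=S$ (or \eqref{J'_wo_r2} for $X=AN$) with $\phi(r)=r^{m+1}$, so that $\phi_t\equiv 0$ and $\phi'-2\phi/r=(m-1)r^{m}$, and then using $|u_tu_r|\le\tfrac12(u_t^2+u_r^2)$ pointwise together with the nonnegativity of the remaining terms of $E_{X,r^{m}}$, one obtains the differential inequality
\[
\Bigl|\frac{d}{dt}E_{X,r^{m+1}}[u](t)\Bigr|\ \le\ (m-1)\,E_{X,r^{m}}[u](t)\ \lesssim\ 1+t^{m-2}.
\]
Since $E_{X,r^{m+1}}[u](0)<\infty$ (because $m+1\le n$ and $u\in\mathcal{E}_0^{X,r^{m+1}}$), this inequality both keeps $E_{X,r^{m+1}}[u](t)$ finite for all $t$ and, after integration from $0$ to $t$, yields $E_{X,r^{m+1}}[u](t)=O(t^{m-1})=O\bigl(t^{(m+1)-2}\bigr)$. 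Iterating from $m=2$ up to $m=n-1$ gives $E_{X,r^{n}}[u](t)=O(t^{n-2})$, and the case $t\to-\infty$ is symmetric.

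\emph{Main obstacle.} The computation itself is short; the only delicate point is that \eqref{eq:I_wo_r2} and \eqref{J'_wo_r2} were derived in Lemmas \ref{lem:I'} and \ref{lem:dtJ} for \emph{bounded} weights, whereas $\phi(r)=r^{m+1}$ is unbounded, so one must check that the boundary terms in the underlying integrations by parts still vanish. At $r=0$ this is immediate from $u(0)=0$; at $r=\infty$ it follows from the standing hypothesis $u\in\bigcap_{i=2}^{n}\mathcal{E}_0^{X,r^i}$, which guarantees $\int_0^\infty r^{m+1}(u_t^2+u_r^2)\,dr<\infty$ and hence the existence of a sequence $r_k\uparrow\infty$ along which $r_k^{m+1}\bigl(u_t^2+u_r^2\bigr)(r_k)\to 0$. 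This joint membership in the scale of weighted spaces is precisely what makes the whole induction legitimate.
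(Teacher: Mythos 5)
Your proof is correct and follows essentially the same route as the paper: both arguments use the identity for $\tfrac{d}{dt}E_{X,\phi}$ with the time-independent weight $\phi=r^{m}$, bound the cross term $(\phi'-2\phi/r)u_tu_r$ by Cauchy--Schwarz against $E_{X,r^{m-1}}$, and iterate in $m$ from the conserved energy upward, gaining one power of $t$ per step. Your explicit check that the boundary terms in the integrations by parts vanish for the unbounded weights is a welcome extra precaution that the paper leaves implicit.
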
	
	
	
	\begin{proof}
	Firstly, we consider $X=S$. We note that for $\varphi=\phi/r^2$, we get
	\begin{equation}\label{eq:J=ES}
	\II=E_{S,\phi}[u](t).
	\end{equation}
	Then, using \eqref{J'_wo_r2} with $\phi=r^n$, one can see 
	\[
	\dfrac{d}{dt}\II=-2\Ks,
	\]
	where $\Ks$ is given by \eqref{K} and $\psi=\phi'-2\frac{\phi}{r}=(n-2)r^{n-1}$. Now using \eqref{eq:J=ES}, we get\\
	\begin{equation*}
		\left|\dfrac{d}{dt}E_{S,r^n}[u] (t)\right|\lesssim E_{S,r^{n-1}}[u](t),
	\end{equation*}
	and for $n=3$, we obtain
	\begin{equation*}
	\begin{aligned}
		\left|\dfrac{d}{dt}E_{S,r^3}[u](t)\right|\lesssim& E_{S,r^{2}}[u](t)=E_{S}[u](0),\\
		\left|E_{S,r^3}[u](t)\right|\lesssim& E_{S}[u](0)t+|E_{S,r^3}[u](0)|.
	\end{aligned}
	\end{equation*}
	Similarly, for $n=4$ and using the last inequality, we get
	\begin{equation*}
	\begin{aligned}
		\left|\dfrac{d}{dt}E_{S,r^4} [u](t)\right|\lesssim& E_{S,r^{3}}[u](t) \lesssim E_{S}[u](0) t +|E_{S,r^3}[u](0)|.
	\end{aligned}
	\end{equation*}
	Now, integrating with respect of time, we have
	\begin{equation*}
	\begin{aligned}
		\left|E_{S,r^4}[u](t)\right|\lesssim& E_{S}[u](0) \frac{t^2}{2} +|E_{S,r^3}[u](0)| t +|E_{S,r^4}[u](0)|.
	\end{aligned}
	\end{equation*}
	Repeating this procedure, we conclude
		\begin{equation*}
	\begin{aligned}
		\left|E_{S,r^n}[u](t)\right|\lesssim& E_{S}[u](0) t^{n-2} +\sum_{j=0}^{n-3} t^{j} |E_{S,r^{n-j}}[u](0)|.
	\end{aligned}
	\end{equation*}
	This ends the proof for the case $X=S$. 
	Analogously, following the same ideas, it can be proved for the case $X=AN$ case.	This completes the proof.
	\end{proof}	
%
%

\subsection{Decay to zero for modified Energies: Proof of the Theorem \ref{thm:S_interior}}

In the spirit of \cite{ACKM,AM,MM}, we consider a suitable linear combination of virials  $\Ks$ and $\Ps$ (see \eqref{K} and \eqref{P}), 
and $\Man$ and $\Ran$ (see \eqref{momentum} and \eqref{R}), for the Skyrme and Adkins-Nappi models respectively. Let 
\begin{equation}\label{eq:HS}
\Hs=\Ks+\gamma_S \Ps,
		\end{equation} 	
		and
		\begin{equation}\label{eq:Han}\Han =\Man+\gamma_{AN} \Ran,
\end{equation}
\\
\noindent
be new virials, where $\gamma_S$ and $\gamma_{AN}$ will be chosen later. These new virials introduce $u^2$ terms, which allow us to simplify the problem considering  
Taylor expansions for the involved trigonometric functions.


   \subsubsection{Decay to zero for modified Energy: Proof of the Theorem \ref{thm:S_interior} for the Skyrme model}

\begin{lem}\label{lem:HS_psi}
Let $u$ be a global solution of \eqref{skyrme} such that $\| u\|_{L^{\infty}}\leq \delta$,  $u\in \mathcal{E}_{0}^{S,\psi}$, and 
$\psi=r \phi$ 
(where $\psi$ and $\phi$ are the weight functions presented in \eqref{K} and \eqref{P}).
Then, $\Hs$  in \eqref{eq:HS} satisfies the following identity
{\small
\begin{equation*}
\begin{aligned}
\frac{d}{dt}\Hs =
			& 	 -\frac12 \int_{0}^\infty  \left( \psi' -2 \gamma_S \frac{\psi}{r} \right)  u_t^2 
			-\frac12 \int_{0}^\infty  \left( \psi' +(2 \gamma_S  -4) \frac{\psi}{r}\right) u_r^2  \\
					&	 -\int_{0}^\infty    \left( (2-\gamma_S) \frac{\psi}{r} +(2\gamma_S-1)  \psi' - \frac{\gamma_S}{2} r \psi'' \right) \frac{u^2}{r^2} 
						  - \alpha^2 \int_{0}^\infty  
							 \left( \psi'  -(2+4\gamma_S) \frac{\psi}{r}   \right) \frac{u^2}{r^2}  (u_t^2+u_r^2)  \\					
	& -\alpha^2    \int_{0}^\infty  
						\left(-\frac12(1-6\gamma_S) \psi'  +(2-4\gamma_S)  \frac{\psi}{r}  -\frac{\gamma_S}{2}  r \psi''  \right)  \frac{u^4}{r^4} 
+H_e(t),
\end{aligned}
\end{equation*}
}
where {\small
\begin{equation*}
\begin{aligned}
H_e(t) =&\frac19\alpha^2\int_{0}^\infty \left( 
							- \gamma_S r\psi''
							+\left( -3   +6 \gamma_S  \right) \psi'
							 +\left( 6 \gamma_S  +  12  \right) \frac{\psi}{r}
	 					\right) \left(\frac{u^6}{r^4}+\frac{O(u^8)}{r^4}\right)\\	
						&-\frac13 \int_{0}^\infty  \left( \psi'  -2(1+2\gamma_S) \frac{\psi}{r}\right) \frac{u^4}{r^2}
			+\frac2{45}\int_{0}^\infty  \left( \psi'  -2\left( 1+\gamma_S \right) \frac{\psi}{r}
			 \right) \left(\frac{u^6}{r^2}+\frac{O(u^8)}{r^2}\right) \\
			 &- \alpha^2 \int_{0}^\infty  
						\bigg[
							 \frac13\left(- \psi'  +2 (1+6\gamma_S) \frac{\psi}{r}  \right) u^2\\
							&\quad \quad \quad \quad \quad \quad \quad  +\frac2{45}\left( \psi'  -2(1+4\gamma_S) \frac{\psi}{r} \right) \left( u^4+O(u^6) \right) 
						\bigg] \frac{u^2}{r^2}(u_t^2+u_r^2).
\end{aligned}
\end{equation*}
}

 \end{lem}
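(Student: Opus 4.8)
The plan is to obtain the identity by simply adding the two formulas already established and then reorganizing the outcome by powers of $u$. By definition $\frac{d}{dt}\Hs=\frac{d}{dt}\Ks+\gamma_S\frac{d}{dt}\Ps$, so I would insert \eqref{eq:dt_KS} from \lemref{lem:K'} and \eqref{eq:dt_PS} from \lemref{lem:P'}. The hypothesis $\psi=r\phi$ is exactly what makes the two sub-lemmas applicable (the assumption $u\in\mathcal{E}_0^{S,r\phi}$ in \lemref{lem:P'} becomes $u\in\mathcal{E}_0^{S,\psi}$, so all integrals are well defined), and it lets me eliminate $\phi$ in favour of $\psi$ via $\phi=\psi/r$, $\phi'=\psi'/r-\psi/r^2$, $\phi''=\psi''/r-2\psi'/r^2+2\psi/r^3$, together with $(\phi/r)_r=\psi'/r^2-2\psi/r^3$ and $r\phi''=\psi''-2\psi'/r+2\psi/r^2$; recall also $p(r)=\psi'/r^2-4\psi/r^3$. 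After this substitution $\frac{d}{dt}\Hs$ is a finite sum of integrals whose weight factors are linear combinations of $\psi$, $\psi'$, $r\psi''$ (with appropriate powers of $r$) and whose field factors are $u_t^2$, $u_r^2$, $u_t^2-u_r^2$, $u^2$, $u^2(u_t^2-u_r^2)$, paired with the trigonometric quantities $\sin^2 u$, $\sin^4 u$, $u\sin(2u)$, $u\sin(2u)\sin^2 u$, plus one term proportional to $\int(\phi'-2\phi/r)r^{-2}\sin(2u)\,u_ru^2$ that still carries a factor $u_r$.

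Next I would use $\|u\|_{L^\infty}\le\delta$ (small by Remark \ref{rem:energy_Linfty}) to Taylor expand each trigonometric factor about $u=0$, namely $\sin^2 u=u^2-\tfrac13u^4+\tfrac{2}{45}u^6+O(u^8)$, $\sin^4 u=u^4-\tfrac23u^6+O(u^8)$, $u\sin(2u)=2u^2-\tfrac43u^4+\tfrac{4}{15}u^6+O(u^8)$, and $u\sin(2u)\sin^2 u=2u^4+O(u^6)$. The one remaining term containing $u_r$ I would handle by integrating by parts in $r$, writing $\sin(2u)u_r=(\sin^2 u)_r$ and then reducing the residual $u u_r\sin^2 u$ the same way, which turns it into $u^4/r^4$ and $u^6/r^4$ (plus $O(u^8)/r^4$) densities with weights built from $\psi,\psi',r\psi''$; the boundary terms vanish since $u(0)=0$ and the energy is finite. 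Collecting then the coefficients of $u_t^2$, of $u_r^2$, of $u^2/r^2$, of $\frac{u^2}{r^2}(u_t^2+u_r^2)$ and of $u^4/r^4$ produces exactly the five displayed principal integrals, the combinations $\psi'-2\gamma_S\psi/r$, $\psi'+(2\gamma_S-4)\psi/r$, $(2-\gamma_S)\psi/r+(2\gamma_S-1)\psi'-\tfrac{\gamma_S}{2}r\psi''$, $\psi'-(2+4\gamma_S)\psi/r$ and $-\tfrac12(1-6\gamma_S)\psi'+(2-4\gamma_S)\psi/r-\tfrac{\gamma_S}{2}r\psi''$ being precisely what $\phi=\psi/r$ turns the corresponding $\phi$-combinations of \lemref{lem:K'}--\lemref{lem:P'} into. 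Everything left over — the pure $u^4/r^2$ and $u^6/r^2$ contributions, the $u^6/r^4$ (and $O(u^8)/r^4$) contributions, and the quartic-and-higher pieces multiplying $\frac{u^2}{r^2}(u_t^2+u_r^2)$ — is gathered into $H_e(t)$, the constants $\tfrac19,\tfrac13,\tfrac{2}{45}$ being the Taylor residues recorded above.

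I expect the main obstacle to be precisely this last grouping, above all the coefficient of $u^4/r^4$ (and of $u^6/r^4$ inside $H_e$): that density receives contributions from three different sources — the $\sin^4 u$ integral in $\frac{d}{dt}\Ks$, the $\alpha^2(r\phi''-4\phi'+6\phi/r)\sin^2 u\,u^2/r^3$ integral in $\frac{d}{dt}\Ps$, and the integration by parts of the $\sin(2u)\,u_ru^2$ term — so one must carefully track how the $\gamma_S$-weighted and the unweighted pieces combine, in particular that the $r\psi''$ term survives only with coefficient $-\tfrac{\gamma_S}{2}$ after the cancellations. Once the five principal coefficients are matched and the remainder is identified with $H_e(t)$, the identity follows.
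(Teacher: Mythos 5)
Your proposal follows essentially the same route as the paper: sum \eqref{eq:dt_KS} and $\gamma_S$ times \eqref{eq:dt_PS}, rewrite $\phi$ in terms of $\psi=r\phi$, Taylor expand the trigonometric factors using $\|u\|_{L^\infty}\le\delta$, integrate by parts the lone $\sin(2u)u_ru^2$ term, and collect by order; the only cosmetic difference is that the paper disposes of that last term in one step via the exact antiderivative $\tfrac14\frac{d}{dr}\left(2u\sin(2u)-2u^2\cos(2u)+\cos(2u)-1\right)$ rather than your two-stage integration by parts, which is equivalent. Your identification of the $u^4/r^4$ and $u^6/r^4$ bookkeeping as the delicate point matches exactly where the paper's computation concentrates its effort.
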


\begin{proof}
Collecting \eqref{eq:dt_PS} and \eqref{eq:dt_KS} and regrouping terms, we get
{\small
\[
\begin{aligned}
\frac{d}{dt}\mathcal{H}_{S}(t)=
		&
			-\frac12 \int_{0}^\infty  \left( \psi' -2\gamma_S \phi \right)  u_t^2 
			-\frac12 \int_{0}^\infty  \left( \psi' +2\gamma_S  \phi  -4 \frac{\psi}{r}\right) u_r^2 \\
	&+\int_{0}^\infty  \left( \frac{\psi'}{r^2}  -2 \frac{\psi}{r^3}\right)\sin^2 (u)  	
			-\gamma_S\int_{0}^\infty   \dfrac{ \phi}{r^2} u  \sin(2u)
			- \gamma_S \int_{0}^\infty    \left( \left( \frac{\phi}{r} \right)_r - \frac12 \phi''  \right)u^2  \\
				&+\gamma_S\alpha^2\int_{0}^\infty \left( \frac{\phi''}{r^2} -4\frac{\phi'}{r^3} +6\frac{\phi}{r^4}\right)  \sin^2(u)  u^2 
			- \gamma_S\alpha^2 \int_{0}^\infty  \dfrac{\phi}{r^4}  u \sin (2u) \sin^2(u) \\
	&+\frac{\alpha^2}{2}    \int_{0}^\infty  \left( \frac{\psi'}{r^4}  -4  \frac{\psi}{r^5}    \right)\sin^4 (u)
	+\gamma_S \alpha^2\int_{0}^\infty \left(\frac{\phi'}{r^2}-2 \frac{\phi}{r^3} \right)   \sin(2u) u_r u^2 \\
	&- \alpha^2 \int_{0}^\infty  
						\left[
							 \frac{\psi'}{r^2}    \sin^2(u)  
							-2 \frac{\psi}{r^3} \sin^2(u)
							- \gamma_S  \frac{ \phi}{r^2} \left( \sin (2u)  u  + 2 \sin^2(u) \right) 
						\right] (u_t^2+u_r^2) .
\end{aligned}
\]}
Now, let $\delta>0$ small enough such that $\| u\|_{L^\infty}<\delta$ (by Remark \ref{rem:energy_Linfty}), we note
\begin{equation}\label{eq:taylor_S}
\begin{aligned}
\sin^2(u)=& u^2-\frac13 u^4+\frac2{45} u^6+O(u^8), \\
u\sin(2u)=& 2u^2-\frac43 u^4+\frac4{15} u^6+O(u^8), \\
2\sin^2(u)+u\sin(2u)=& 4u^2-2u^4+\frac{16}{45}u^6+O(u^8), \\
u \sin^2(u)\sin(2u)=& 2u^4-2u^6+O(u^8),\\
\sin^4(u)=& u^4-\frac23 u^6+O(u^8).
\end{aligned}
\end{equation}
Then, we obtain the following decomposition
\begin{equation*}\label{DtH}
\frac{d}{dt} \Hs = H_1+H_2+H_3+H_4+H_5,\end{equation*} 
\noindent
where 
\[
\begin{aligned}
		H_1=&
			-\frac12 \int_{0}^\infty  \left( \psi' -2 \gamma_S\phi \right)  u_t^2 
			-\frac12 \int_{0}^\infty  \left( \psi' +2 \gamma_S \phi  -4 \frac{\psi}{r}\right) u_r^2 , \\
	H_2=&{}~ \int_{0}^\infty  \left( \frac{\psi'}{r^2}  -2 \frac{\psi}{r^3}\right)\left[ u^2-\frac13 u^4+\frac2{45} u^6+O(u^8)\right]  	
			- \gamma_S\int_{0}^\infty    \left(  \left( \frac{\phi}{r} \right)_r - \frac12 \phi''  \right) u^2  \\
			&- \gamma_S \int_{0}^\infty   \dfrac{ \phi}{r^2} \left(  2u^2-\frac43 u^4+\frac4{15} u^6+O(u^8) \right),\\
	H_3=&- \alpha^2 \int_{0}^\infty  
						\bigg[
							 \left(\frac{\psi'}{r^2}  -2 \frac{\psi}{r^3} \right) \left( u^2-\frac13 u^4+\frac2{45} u^6+O(u^8)\right)\\
							&\quad \quad \quad \quad\quad \quad \quad \quad-  \gamma_S  \frac{ \phi}{r^2} \left( 4u^2-2u^4+\frac{16}{45}u^6+O(u^8) \right) 
						\bigg] (u_t^2+u_r^2),\\
	H_4=&{}~  \gamma_S\alpha^2\int_{0}^\infty \bigg[ \bigg( \frac{\phi''}{r^2} -4\frac{\phi'}{r^3} +6\frac{\phi}{r^4}\bigg)  \left( u^4-\frac13 u^6+O(u^{8})\right)
			-   2\dfrac{\phi}{r^4}  \left(  u^4-u^6+O(u^{8}) \right) \bigg]\\
	&+\frac{\alpha^2}{2}    \int_{0}^\infty  \left( \frac{\psi'}{r^4}  -4  \frac{\psi}{r^5}    \right) \left(  u^4-\frac23 u^6+O(u^{8}) \right),
	\end{aligned}
\]

and
\begin{equation}\label{eq:he}
\begin{aligned}
	H_5=&{}~  \gamma_S \alpha^2\int_{0}^\infty \left(\frac{\phi'}{r^2}-2 \frac{\phi}{r^3} \right)   \sin(2u) u_r u^2 .
\end{aligned}
\end{equation}
Regrouping terms of the same order, we get
\[
\begin{aligned}
	H_2=&\int_{0}^\infty    \bigg[ \left( \gamma_S \bigg[\frac{1}{2} \phi'' - \left( \frac{\phi}{r} \right)_r -2  \dfrac{ \phi}{r^2} \bigg]+ \frac{\psi'}{r^2}  -2 \frac{\psi}{r^3} \right) u^2  
			+\frac23  \left( \frac{\psi}{r^3} - \frac{\psi'}{2r^2} + 2\gamma_S \dfrac{ \phi}{r^2} \right) u^4 \bigg]\\
			&+\frac2{45}\int_{0}^\infty  \left( \frac{\psi'}{r^2}  -2 \frac{\psi}{r^3}
			- \gamma_S  6 \dfrac{ \phi}{r^2} \right) \left( u^6+O(u^8) \right).
\end{aligned}
\]
Similarly,  for $H_3$ we get
\[
\begin{aligned}		
	H_3=&- \alpha^2 \int_{0}^\infty  
							 \left(\frac{\psi'}{r^2}  -2 \frac{\psi}{r^3}  
							- 4 \gamma_S  \frac{ \phi}{r^2} \right) u^2  (u_t^2+u_r^2) \\
		&- \alpha^2 \int_{0}^\infty  
						\bigg[
							\frac13 \left( 2 \frac{\psi}{r^3} - \frac{\psi'}{r^2} 
							+6  \gamma_S  \frac{ \phi}{r^2} \right) u^4 \\
							 &\quad \quad \quad \quad \quad \quad +\frac2{45} \left(\frac{\psi'}{r^2}  -2 \frac{\psi}{r^3} 
							- 8 \gamma_S   \frac{ \phi}{r^2}\right) \left( u^6+O(u^8) \right) 
						\bigg] (u_t^2+u_r^2) .
\end{aligned}
\]	
For $H_4$, we have	
	\[
\begin{aligned}							
	H_4=
	&{}~\alpha^2    \int_{0}^\infty  \left(\frac12 \frac{\psi'}{r^4}  -2  \frac{\psi}{r^5}   -  2 \gamma_S\dfrac{\phi}{r^4} +  \gamma_S\frac{\phi''}{r^2} -4 \gamma_S\frac{\phi'}{r^3} +6 \gamma_S\frac{\phi}{r^4}  \right)  u^4 \\
	&+\frac13  \alpha^2\int_{0}^\infty \left( - \gamma_S\frac{\phi''}{r^2} +4  \gamma_S\frac{\phi'}{r^3} -6 \gamma_S\frac{\phi}{r^4}
			+ 6 \gamma_S\dfrac{\phi}{r^4}   
	 - \frac{\psi'}{r^4}  +4   \frac{\psi}{r^5}    \right) \left( u^6+O(u^{8}) \right).
\end{aligned}
\]
For $H_5$, first  we note
\[
 \sin(2u) u_r u^2
		=  \frac14 \frac{d}{dr} (2u \sin(2u)-2u^2 \cos(2u)+\cos(2u)-1).
\]
Now, replacing in $H_5$ and integrating by parts, we get
\begin{equation*}
\begin{aligned}
	H_5
		=& -\frac{\gamma_S}{4} \alpha^2\int_{0}^\infty \left(\frac{\phi'}{r^2}-2 \frac{\phi}{r^3} \right)_r     (2u \sin(2u)-2u^2 \cos(2u)+\cos(2u)-1),
		\end{aligned}
\end{equation*}
		using its Taylor expansion and regrouping terms, we have
		\begin{equation*}
\begin{aligned}
			H_5
			=& -\frac{\gamma_S}{4} \alpha^2\int_{0}^\infty \left(\frac{\phi''}{r^2}-4 \frac{\phi'}{r^3} +6\frac{\phi}{r^4}\right)   \left(2u^4-\frac89 u^6+O(u^8) \right)\\
		=&{}~ 
		 \alpha^2 \frac{\gamma_S}{2} \left\{ \int_{0}^\infty \left(4 \frac{\phi'}{r^3}-\frac{\phi''}{r^2} -6\frac{\phi}{r^4}\right)  u^4
		+  \frac{2}{9}   \int_{0}^\infty \left(\frac{\phi''}{r^2}-4 \frac{\phi'}{r^3} +6\frac{\phi}{r^4}\right)   \left( u^6+O(u^8) \right)\right\}.
		\medskip
\end{aligned}
\end{equation*}
{}
Collecting the last equation and $H_4$, we obtain \medskip
	\begin{equation}\label{eq:he1}
\begin{aligned}							
	H_4+H_5
	 =
	&{}~\alpha^2    \int_{0}^\infty  \left(\frac12 \frac{\psi'}{r^4}  -2  \frac{\psi}{r^5}   +\gamma_S \dfrac{\phi}{r^4} -2\gamma_S \frac{\phi'}{r^3}  +\frac{\gamma_S}{2}\frac{\phi''}{r^2}  \right)  u^4 \\
	&+\alpha^2\int_{0}^\infty \left( 
							-\frac19 \gamma_S \frac{\phi''}{r^2} +\frac49 \gamma_S \frac{\phi'}{r^3} 
							 -\frac13 \frac{\psi'}{r^4}  +\frac43   \frac{\psi}{r^5}   
							 +\frac43 \gamma_S \frac{\phi}{r^4}
	 					\right) \left( u^6+O(u^{8}) \right).
\end{aligned}
\end{equation}\medskip
Having in mind that $\psi=r\phi$, we have
\[
\phi'=\frac{\psi'}{r}-\frac{\psi}{r^2}\quad  \text{ and }\quad \phi''=\frac{\psi''}{r}-2\frac{\psi'}{r^2}+2\frac{\psi}{r^3}.
\]
Now, rewriting $H_i$, for $i=1,\dots,5 $, in terms of $\psi$ and its derivatives, we get

\begin{equation}\label{eq:H1}
\begin{aligned}
H_1
	=&
			-\frac12 \int_{0}^\infty  \left( \psi' -2 \gamma_S \frac{\psi}{r} \right)  u_t^2  
			-\frac12 \int_{0}^\infty  \left( \psi' +(2 \gamma_S  -4) \frac{\psi}{r}\right) u_r^2 , 
							\end{aligned}
				\end{equation}		
				
\begin{equation}\label{eq:H2}
\begin{aligned}	
H_2=&
					\int_{0}^\infty    \left( (\gamma_S-2) \frac{\psi}{r^3} +(1-2\gamma_S)  \frac{\psi'}{r^2} + \frac{\gamma_S}{2} \frac{\psi''}{r}  \right) u^2  
			-\int_{0}^\infty  \left( \frac{\psi'}{3r^2}  -\frac23(1+2\gamma_S) \frac{\psi}{r^3}\right) u^4\\
			&+\int_{0}^\infty  \left( \frac2{45}\frac{\psi'}{r^2}  -\frac4{45}\left( 1+\gamma_S \right) \frac{\psi}{r^3}
			 \right) \left( u^6+O(u^8) \right),
			 				\end{aligned}
				\end{equation}		
				
\begin{equation}\label{eq:H3}
\begin{aligned}	
H_3=&
						 \alpha^2 \int_{0}^\infty  
							 \left( (2+4\gamma_S) \frac{\psi}{r^3}   -\frac{\psi'}{r^2} \right) u^2  (u_t^2+u_r^2)  \\
		&+ \alpha^2 \int_{0}^\infty  
						\bigg[
							 \left(\frac13 \frac{\psi'}{r^2}  -\frac23 (1+6\gamma_S) \frac{\psi}{r^3}  \right) u^2 \\
							 &\quad \quad \quad \quad \quad \quad \quad -\left(\frac2{45}\frac{\psi'}{r^2}  -\frac4{45}(1+4\gamma_S) \frac{\psi}{r^3} \right) \left( u^4+O(u^6) \right) 
						\bigg]u^2 (u_t^2+u_r^2),
				\end{aligned}
				\end{equation}		
and			
\begin{equation}\label{eq:H45}
\begin{aligned}			
H_4+H_5 =
	&{}~\alpha^2    \int_{0}^\infty  
						\left(\frac12(1-6\gamma_S) \frac{\psi'}{r^4}  -(2-4\gamma_S)  \frac{\psi}{r^5}  +\frac{\gamma_S}{2}  \frac{\psi''}{r^3}  \right)  u^4 \\
	&+\alpha^2\int_{0}^\infty \left( 
							 \left(\frac69 \gamma_S  +  \frac43  \right) \frac{\psi}{r^5}
							 +\left( \frac69 \gamma_S -\frac13   \right) \frac{\psi'}{r^4}
							 -\frac19 \gamma_S \frac{\psi''}{r^3}
	 					\right) \left( u^6+O(u^{8}) \right).
\end{aligned}
\end{equation}

Finally, collecting \eqref{eq:H1}, \eqref{eq:H2}, \eqref{eq:H3}, \eqref{eq:H45}, and regrouping terms of the same order, we obtain

\begin{equation*}
\begin{aligned}
\frac{d}{dt}\Hs =
			& -\frac12 \int_{0}^\infty  \left( \psi' -2 \gamma_S \frac{\psi}{r} \right)  u_t^2 
			-\frac12 \int_{0}^\infty  \left( \psi' +(2 \gamma_S  -4) \frac{\psi}{r}\right) u_r^2  \\
					& -\int_{0}^\infty    \left( (2-\gamma_S) \frac{\psi}{r} +(2\gamma_S-1)  \psi' - \frac{\gamma_S}{2} r \psi'' \right) \frac{u^2}{r^2}\\  
						& 	 - \alpha^2 \int_{0}^\infty  
							 \left( \psi'  -(2+4\gamma_S) \frac{\psi}{r}   \right) \frac{u^2}{r^2}  (u_t^2+u_r^2) \\						&-{}~\alpha^2    \int_{0}^\infty  
						\left(-\frac12(1-6\gamma_S) \psi'  +(2-4\gamma_S)  \frac{\psi}{r}  -\frac{\gamma_S}{2}  r \psi''  \right)  \frac{u^4}{r^4}
+H_e(t),
\end{aligned}
\end{equation*}
where
\begin{equation*}
\begin{aligned}
H_e&(t) =
		\frac19\alpha^2\int_{0}^\infty \left( 
							\left( 6 \gamma_S  +  12  \right) \frac{\psi}{r}
							+\left( -3   +6 \gamma_S  \right) \psi'
							- \gamma_S r\psi''
	 					\right) \left(\frac{u^6}{r^4}+\frac{O(u^8)}{r^4}\right)\\	
						&-\frac13 \int_{0}^\infty  \left( \psi'  -2(1+2\gamma_S) \frac{\psi}{r}\right) \frac{u^4}{r^2}
			+\frac2{45}\int_{0}^\infty  \left( \psi'  -2\left( 1+\gamma_S \right) \frac{\psi}{r}
			 \right) \left(\frac{u^6}{r^2}+\frac{O(u^8)}{r^2}\right) \\
			 &- \frac{\alpha^2}{45} \int_{0}^\infty  
						\left[
							 15\left( 2 (1+6\gamma_S) \frac{\psi}{r}  - \psi' \right) 
							 +2\left( \psi'  -2(1+4\gamma_S) \frac{\psi}{r} \right) \left( u^2+O(u^4) \right) 
						\right] \frac{u^4}{r^2}(u_t^2+u_r^2).	
\end{aligned}
\end{equation*}
This ends the proof.
\end{proof}

		 Under the hypothesis of Lemma \ref{lem:HS_psi}, the functionals $\Ks$ and $\Ps$ (see \eqref{K}--\eqref{P}) are well-defined. In fact, using the Cauchy-Schwarz inequality,  we have
	\[
	\begin{aligned}
	&\Ks \leq \int_{0}^\infty \psi \left(  1+\frac{2 \alpha^2 \sin^2(u)}{r^2}\right)(u_t^2+u_r^2),
	\end{aligned}
		\]
		\noindent
	and	
		\[
	\begin{aligned}
	&	\Ps \leq  \int_{0}^\infty r\phi  \left(1+\frac{2 \alpha^2 \sin^2(u)}{r^2}\right)(u_t^2+\frac{u^2}{r^2}).
	\end{aligned}
		\]
		Then, assuming $\psi = r \phi$, and $u\in\mathcal{E}_{0}^{S,\psi}$, we get
			\[
			\begin{aligned}
	|\Ks|+|\Ps | \leq  E_{S,\psi}[u](t),
	\end{aligned}
	\]
		hence concluding that the functionals $\Ks$ and $\Ps$ in \eqref{K}-\eqref{P} are well-defined.

	\begin{cor}
	Let $n ,\gamma_S\in\R$
	 and $\psi=r^n \chi $. Then, under the hypothesis of Lemma \ref{lem:HS_psi},  the following identity holds:
		
	\begin{equation*}
\begin{aligned}
\frac{d}{dt}&\Hs =\\
			&  -\frac12 \int_{0}^\infty  \left( (n-2\gamma_S)r^{n-1}\chi+r^n\chi' \right)  u_t^2  
			-\frac12 \int_{0}^\infty   \left( (n+2\gamma_S-4)r^{n-1}\chi +r^n\chi' \right)u_r^2 , \\
					& -\int_{0}^\infty     \left( (2-\gamma_S+n(2\gamma_S-1)-\frac{\gamma_S}{2}n(n-1))r^{n-1}\chi +(\gamma_S(2-n)-1)r^n\chi' -\frac{\gamma_S}{2}r^{n+1}\chi''\right) \frac{u^2}{r^2}\\  					
	&	-\alpha^2    \int_{0}^\infty  
						\left( \frac12 (4 - n -  8\gamma_S -\gamma_S (-7 + n) n)r^{n-1}\chi +(\gamma_S(3-n)-\frac12)r^n\chi' -\frac{\gamma_S}{2}r^{n+1}\chi''\right) \frac{u^4}{r^4} \\
						&  - \alpha^2 \int_{0}^\infty  
							\left( (n-2-4\gamma_S)r^{n-1}\chi +r^n\chi' \right) \frac{u^2}{r^2}  (u_t^2+u_r^2) 
						+H_e(t)
\end{aligned}
\end{equation*}	
and for $H_e$ holds
\begin{equation*}
\begin{aligned}
H_e(t)=&
			-\frac13 \int_{0}^\infty  \left( (n  -2(1+2\gamma_S) ) r^{n-1}\chi+r^n \chi' \right) \frac{u^4}{r^2}\\
			&+\frac2{45}\int_{0}^\infty  \left( (n  -2\left( 1+\gamma_S \right))r^{n-1}\chi+r^{n}\chi'
			 \right) \left(\frac{u^6}{r^2}+\frac{O(u^8)}{r^2}\right) \\
			 &- \alpha^2 \int_{0}^\infty  
						\left[
							 \frac13\left((- n  +2 (1+6\gamma_S))r^{n-1}\chi+r^n \chi'  \right) u^2
						\right] \frac{u^2}{r^2} (u_t^2+u_r^2)\\	
			 &-  \frac2{45} \alpha^2 \int_{0}^\infty  
							\left[ (n  -2(1+4\gamma_S))r^{n-1\chi} +r^n \chi' \right] \left( u^4+O(u^6) \right) 
						 \frac{u^2}{r^2}(u_t^2+u_r^2) \\	
						 &+\frac19\alpha^2\int_{0}^\infty 
						\big[ (\gamma_S n(5-n)-3n+6\gamma_S +12)r^{n-1}\chi' \\
						&\quad \quad \quad\quad \quad \quad\quad \quad \quad+(\gamma_S(6-n)-3)r^n\chi' -\gamma_S r^{n+1}\chi''\big]
						 \left(\frac{u^6}{r^4}+\frac{O(u^8)}{r^4}\right).\\	
\end{aligned}
\end{equation*}
	\end{cor}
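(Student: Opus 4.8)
The plan is to obtain the identity by inserting the weight $\psi=r^n\chi$ into the master identity of Lemma~\ref{lem:HS_psi}. No new analytic ingredient is needed: the algebraic identity for $\frac{d}{dt}\Hs$, the Taylor expansions \eqref{eq:taylor_S}, and the well-definedness of $\Ks$, $\Ps$ (see \eqref{K}--\eqref{P}) under $\psi=r\phi$ and $u\in\mathcal{E}_0^{S,\psi}$ have all been established above. What remains is therefore a substitution followed by a regrouping of terms.

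First I would record the derivatives
\[
\psi'=nr^{n-1}\chi+r^n\chi',\qquad
\psi''=n(n-1)r^{n-2}\chi+2nr^{n-1}\chi'+r^n\chi'',
\]
so that $\frac{\psi}{r}=r^{n-1}\chi$ and $r\psi''=n(n-1)r^{n-1}\chi+2nr^{n}\chi'+r^{n+1}\chi''$. Hence every coefficient appearing in Lemma~\ref{lem:HS_psi} becomes a linear combination of the three building blocks $r^{n-1}\chi$, $r^n\chi'$, $r^{n+1}\chi''$, with scalar coefficients that are polynomials in $n$ and $\gamma_S$.

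Next I would perform the substitution coefficient by coefficient. The quadratic ones are immediate: $\psi'-2\gamma_S\frac{\psi}{r}=(n-2\gamma_S)r^{n-1}\chi+r^n\chi'$ and $\psi'+(2\gamma_S-4)\frac{\psi}{r}=(n+2\gamma_S-4)r^{n-1}\chi+r^n\chi'$, giving the stated $u_t^2$ and $u_r^2$ lines. For the $u^2/r^2$ term I would expand $(2-\gamma_S)\frac{\psi}{r}+(2\gamma_S-1)\psi'-\frac{\gamma_S}{2}r\psi''$: its $r^{n-1}\chi$ part is $(2-\gamma_S)+n(2\gamma_S-1)-\frac{\gamma_S}{2}n(n-1)$, its $r^n\chi'$ part is $(2\gamma_S-1)-\gamma_S n=\gamma_S(2-n)-1$, and its $r^{n+1}\chi''$ part is $-\frac{\gamma_S}{2}$. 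The same mechanism applied to the $u^4/r^4$ coefficient $-\frac12(1-6\gamma_S)\psi'+(2-4\gamma_S)\frac{\psi}{r}-\frac{\gamma_S}{2}r\psi''$ yields the factor $\frac12\left(4-n-8\gamma_S-\gamma_S(-7+n)n\right)$ in front of $r^{n-1}\chi$, together with $\left(\gamma_S(3-n)-\frac12\right)r^n\chi'-\frac{\gamma_S}{2}r^{n+1}\chi''$; and the coefficient $\psi'-(2+4\gamma_S)\frac{\psi}{r}$ of $\frac{u^2}{r^2}(u_t^2+u_r^2)$ turns into $(n-2-4\gamma_S)r^{n-1}\chi+r^n\chi'$.

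Finally, the same substitution applied to each term of $H_e(t)$ in Lemma~\ref{lem:HS_psi} --- the $u^4/r^2$ term, the $(u^6+O(u^8))/r^2$ term, and the two $\frac{u^2}{r^2}(u_t^2+u_r^2)$ terms carrying factors $u^2$ and $u^4+O(u^6)$, together with the $(u^6+O(u^8))/r^4$ term --- produces the displayed expression for $H_e(t)$; for instance the coefficient of $r^{n-1}\chi$ in the $u^6/r^4$ line is obtained by combining the three contributions $-\gamma_S n(n-1)$, $(6\gamma_S-3)n$ and $6\gamma_S+12$. The only obstacle I foresee is clerical: one must keep careful track of the polynomial-in-$n$ contributions to each $r^{n-1}\chi$ block, which arrive from three separate sources ($\frac{\psi}{r}$, $\psi'$ and $r\psi''$), as well as from the $r\psi''$ prefactor inside $H_e$; there is no genuine difficulty, and I would simply tabulate these coefficients before assembling the final identity.
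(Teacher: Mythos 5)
Your proposal coincides with the paper's own proof, which is exactly this: substitute $\psi=r^n\chi$ into the identity of Lemma~\ref{lem:HS_psi}, express $\psi'$, $\tfrac{\psi}{r}$ and $r\psi''$ in the basis $r^{n-1}\chi$, $r^n\chi'$, $r^{n+1}\chi''$, and regroup. (Incidentally, the three contributions you correctly list for the $u^6/r^4$ coefficient sum to $\gamma_S n(7-n)-3n+6\gamma_S+12$ rather than the $\gamma_S n(5-n)-3n+6\gamma_S+12$ printed in the corollary, which points to a typo in the stated coefficient rather than to any defect in your method.)
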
 
\begin{proof}
The proof follows directly replacing $\psi=r^n \chi$ in Lemma \ref{lem:HS_psi}.
\end{proof}

\medskip
Now, if we set $ \chi= 1$, we obtain the following result:

\begin{cor}\label{lem:Hs_psi}
Let $\delta>0$ small enough, $\psi=r^n$  and   $u$ 
be a global solution of \eqref{skyrme} such that $u\in \mathcal{E}_{0}^{S,r^n}$ and $\|u\|_{L^{\infty}}< \delta$. 
Then, for $\gamma_S=-1$ and $n\geq 2$,
 the functional $\Hs$  \eqref{eq:HS} satisfies
				\begin{equation*}\label{eq:HS_identity}
				\begin{aligned}
		\dfrac{d}{dt} \Hs =&
 	 -\frac12 \int_{0}^\infty  r^{n-1}\left[ (n+2) u_t^2  +  (n-6) u_r^2 +( n-6)(n-1) \frac{u^2}{r^2} \right.	\\
						&\quad \quad \quad \quad \quad \ \ \ \ \left. +2\alpha^2  \left( n+2\right) \frac{u^2}{r^2}  (u_t^2+u_r^2)	
						+\alpha^2   (n-6)(n-2)  \frac{u^4}{r^4}  \right]  
						+H_e(t),
\end{aligned}
\end{equation*}
with $|H_e(t)|\leq \delta^2 E_{S,r^{n-1}}(t)$.
\end{cor}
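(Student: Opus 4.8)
The plan is to specialize the general identity for $\tfrac{d}{dt}\Hs$ obtained in Lemma~\ref{lem:HS_psi} (equivalently, its reformulation in the preceding corollary) to the weight $\psi=r^{n}$ and the parameter $\gamma_S=-1$, and then to show that the leftover term $H_e(t)$ is quadratically small in $\delta$ relative to $E_{S,r^{n-1}}(t)$, using the uniform bound $\|u\|_{L^{\infty}_{t,x}}<\delta$ from Remark~\ref{rem:energy_Linfty}. Note that $\psi=r^n$ forces $\phi=\psi/r=r^{n-1}$ in the correction functional $\Ps$, so the natural energy appearing on the right is $E_{S,r^{n-1}}$, and the hypothesis $u\in\mathcal{E}_{0}^{S,r^n}$ guarantees that $\Ks$, $\Ps$ and hence $\Hs$ in \eqref{eq:HS} are well-defined.

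First I would substitute $\psi=r^{n}$, so that $\psi'=nr^{n-1}$ and $\psi''=n(n-1)r^{n-2}$, together with $\gamma_S=-1$, into the formula of Lemma~\ref{lem:HS_psi}. A short computation reduces the five coefficients multiplying $u_t^2$, $u_r^2$, $u^2/r^2$, $(u^2/r^2)(u_t^2+u_r^2)$ and $u^4/r^4$ in the main part to
\begin{align*}
\psi'-2\gamma_S\tfrac{\psi}{r}&=(n+2)\,r^{n-1},\qquad
\psi'+(2\gamma_S-4)\tfrac{\psi}{r}=(n-6)\,r^{n-1},\\
(2-\gamma_S)\tfrac{\psi}{r}+(2\gamma_S-1)\psi'-\tfrac{\gamma_S}{2}r\psi''&=\tfrac12(n-1)(n-6)\,r^{n-1},\\
\psi'-(2+4\gamma_S)\tfrac{\psi}{r}&=(n+2)\,r^{n-1},\\
-\tfrac12(1-6\gamma_S)\psi'+(2-4\gamma_S)\tfrac{\psi}{r}-\tfrac{\gamma_S}{2}r\psi''&=\tfrac12(n-2)(n-6)\,r^{n-1},
\end{align*}
which are exactly the factorized polynomials in $n$ appearing in the statement. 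Collecting these terms with the signs of Lemma~\ref{lem:HS_psi} gives the displayed identity for $\tfrac{d}{dt}\Hs$, and $H_e(t)$ is the sum of the remaining higher-order integrals after the same substitution.

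It then remains to estimate $H_e(t)$. After substituting $\psi=r^{n}$, each summand of $H_e$ is (up to an $n$-bounded numerical coefficient) an integral of $r^{n-1}$ against one of the densities $u^4/r^2$, $\,u^6/r^2+O(u^8)/r^2$, $\,u^6/r^4+O(u^8)/r^4$, or $\bigl(u^2+u^4+O(u^6)\bigr)(u^2/r^2)(u_t^2+u_r^2)$; this is read off directly from the $H_e$ in Lemma~\ref{lem:HS_psi} together with the Taylor expansions \eqref{eq:taylor_S}. Using $\|u\|_{L^\infty}<\delta$ (Remark~\ref{rem:energy_Linfty}) one has $u^2\lesssim\sin^2u$, hence $u^4\lesssim\sin^4u$, for $|u|<\delta$; extracting two powers of $u$ from each density yields
\begin{align*}
\frac{u^4}{r^2}&\le\delta^2\frac{u^2}{r^2}\lesssim\delta^2\frac{\sin^2u}{r^2},
\qquad \frac{u^6}{r^4}\le\delta^2\frac{u^4}{r^4}\lesssim\delta^2\frac{\sin^4u}{r^4},\\
\frac{u^4}{r^2}(u_t^2+u_r^2)&\le\delta^2\frac{u^2}{r^2}(u_t^2+u_r^2)\lesssim\delta^2\frac{\sin^2u}{r^2}(u_t^2+u_r^2),
\end{align*}
and similarly for the $u^6$-densities and the $O(u^k)$ tails, which carry even more powers of $u$. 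Since by \eqref{w_energy_S} the density of $E_{S,r^{n-1}}(t)$ equals $r^{n-1}\bigl[(1+2\alpha^2\sin^2(u)/r^2)(u_t^2+u_r^2)+2\sin^2(u)/r^2+\alpha^2\sin^4(u)/r^4\bigr]$, each of these pointwise bounds integrates to $\lesssim\delta^2 E_{S,r^{n-1}}(t)$; summing the finitely many terms gives $|H_e(t)|\lesssim\delta^2 E_{S,r^{n-1}}(t)$, which is the asserted estimate (the $n$-dependent constant being absorbed into the standing hypothesis that $\delta$ is small enough).

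The only real work is the bookkeeping: checking that with $\gamma_S=-1$ the five main coefficients collapse to the clean polynomials $(n+2)$, $(n-6)$, $(n-1)(n-6)$, $(n-2)(n-6)$, and verifying that in $H_e$ every derivative-free density is at least quartic in $u$ while every density carrying $(u_t^2+u_r^2)$ has a spare factor $u^2$ beyond the energy density — this is precisely what produces the $\delta^2$ gain. Once this structural check is made, the absorption into $E_{S,r^{n-1}}$ is routine, so I expect the coefficient identification above to be the main (purely computational) obstacle.
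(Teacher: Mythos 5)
Your proposal is correct and follows essentially the same route as the paper: the paper obtains this corollary by substituting $\psi=r^n\chi$ into Lemma \ref{lem:HS_psi} and then setting $\chi=1$, which is exactly your direct substitution $\psi=r^n$, $\gamma_S=-1$, and your five coefficient computations all check out. You in fact supply more detail than the paper does for the bound $|H_e(t)|\leq \delta^2 E_{S,r^{n-1}}(t)$ (the paper states it without argument), and your observation that every term of $H_e$ carries a spare factor $u^2$ beyond a density controlled by the $E_{S,r^{n-1}}$ integrand is the right justification.
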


Assuming $\delta>0$ small enough, $n\geq 6$ and applying Corollary \ref{lem:Hs_psi}, 
we obtain  the following virial inequality
				\begin{equation}\label{eq:HS_positividad}
				\begin{aligned}
		-\dfrac{d}{dt} \Hs \geq &{}~
 	 \frac14 \int_{0}^\infty  r^{n-1}\left[ (n+2) u_t^2  +  (n-6) u_r^2 +( n-6)(n-1) \frac{u^2}{r^2} \right.	\\
						&\quad \quad \quad \quad \quad \ \ \ \ \left. +2\alpha^2  \left( n+2\right) \frac{u^2}{r^2}  (u_t^2+u_r^2)	
						+\alpha^2   (n-6)(n-2)  \frac{u^4}{r^4}  \right] 
						\geq 0.
\end{aligned}
\end{equation}
In particular, as an application of \eqref{eq:HS_positividad},  we obtain the following  result for $r^{6+\epsilon}$ and $r^{7+\epsilon}$ weighted energies.
\begin{cor}\label{cor:Sint_local}
	Let $\epsilon>0$ and $u$ be a small global solution of \eqref{skyrme} in the class $\mathcal{E}_{0}^{S,r^{6+\epsilon}}\cap \mathcal{E}_{0}^{S,r^{7+\epsilon}}$. 
	Then, 
	\begin{enumerate}
		\item Integrability in time:
		\begin{equation*}\label{eqn:S_seq1}
		\begin{aligned}
		\int_{2}^{\infty} \int_{0}^{\infty}   
								(r^{6+\epsilon} +r^{7+\epsilon})\left[  u_t^2  +   u_r^2 +  \frac{u^2}{r^2} 	
						+2\alpha^2 \frac{u^2}{r^2}  (u_t^2+u_r^2)	
						+\alpha^2 \frac{u^4}{r^4}  \right] 
		drdt \lesssim_{\epsilon} 1.
		\end{aligned}
		\end{equation*}
		\item Sequential decay to zero: there exists $s_n, \ t_n\uparrow \infty$ such that
		\begin{equation}\label{skyrme_sucesion}
		\lim_{n\to \infty} E_{S,r^{6+\epsilon}}[u](t_n) =0\  \mbox{ and }\ \lim_{n\to \infty} E_{S,r^{7+\epsilon}}[u](s_n) =0.
		\end{equation}
	\end{enumerate}
\end{cor}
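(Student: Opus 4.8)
The plan is to deduce both conclusions of Corollary \ref{cor:Sint_local} from the coercive virial inequality \eqref{eq:HS_positividad} applied with the two exponents $n = 6+\epsilon$ and $n = 7+\epsilon$. For each such $n$, note that since $\epsilon>0$ we have $n+2>0$, $n-6>0$, $(n-6)(n-1)>0$ and $(n-6)(n-2)>0$, so every coefficient appearing inside the bracket of \eqref{eq:HS_positividad} is strictly positive; hence the right-hand side of \eqref{eq:HS_positividad} bounds (up to an $\epsilon$-dependent constant) the nonnegative quantity
\[
\mathcal{D}_n(t) := \int_{0}^{\infty} r^{n-1}\left[ u_t^2 + u_r^2 + \frac{u^2}{r^2} + 2\alpha^2 \frac{u^2}{r^2}(u_t^2+u_r^2) + \alpha^2 \frac{u^4}{r^4}\right]dr .
\]
Thus $\frac{d}{dt}\Hs \le -c_\epsilon \,\mathcal{D}_{6+\epsilon}(t)$ and likewise for $7+\epsilon$, where $c_\epsilon>0$.

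For part (1), integrability in time, I would integrate this differential inequality over $[2,T]$, obtaining
\[
c_\epsilon \int_{2}^{T} \mathcal{D}_{6+\epsilon}(t)\,dt \le \Hs\big|_{t=2} - \Hs\big|_{t=T} .
\]
The key point is that $\Hs = \Ks + \gamma_S \Ps$ is uniformly bounded in $t$: by the Cauchy--Schwarz bounds established just after Lemma \ref{lem:HS_psi} one has $|\Ks|+|\Ps| \le E_{S,\psi}[u](t) = E_{S,r^{6+\epsilon}}[u](t)$, and since $(u,u_t)\in\mathcal{E}_0^{S,r^{6+\epsilon}}$, Proposition \ref{prop:time_rate_skyrme} would in general only give polynomial growth — but here the smallness hypothesis \eqref{smallness0_S_int} is in force, and in fact the hypothesis of the corollary is precisely that the weighted energy is \emph{bounded} (membership in $\mathcal{E}_0^{S,r^{6+\epsilon}}\cap\mathcal{E}_0^{S,r^{7+\epsilon}}$ with the convention that these spaces carry a uniform-in-$t$ energy bound). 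Consequently $\sup_{t}|\Hs|<\infty$, and letting $T\to\infty$ gives $\int_2^\infty \mathcal{D}_{6+\epsilon}(t)\,dt < \infty$; the same argument with $n=7+\epsilon$ gives $\int_2^\infty \mathcal{D}_{7+\epsilon}(t)\,dt<\infty$. Adding the two yields the stated estimate, since $r^{6+\epsilon}+r^{7+\epsilon}$ times the bracket is dominated by $\mathcal{D}_{6+\epsilon}+\mathcal{D}_{7+\epsilon}$.

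For part (2), sequential decay, I would observe that $E_{S,r^{6+\epsilon}}[u](t) \lesssim \int_0^\infty r^{6+\epsilon-2}\big[(1+\tfrac{2\alpha^2\sin^2 u}{r^2})(u_t^2+u_r^2) + \tfrac{2\sin^2 u}{r^2} + \tfrac{\alpha^2\sin^4 u}{r^4}\big]dr$, and using the Taylor expansions \eqref{eq:taylor_S} together with $\|u\|_{L^\infty}<\delta$ one bounds $\sin^2 u \lesssim u^2$ and $\sin^4 u\lesssim u^4$, so that $E_{S,r^{6+\epsilon}}[u](t) \lesssim \mathcal{D}_{7+\epsilon}(t) + \mathcal{D}_{6+\epsilon}(t)$ — note the shift, since the $\sin^2 u/r^2$ potential term carries weight $r^{6+\epsilon-2}\cdot r^{-2}\sin^2 u \sim r^{5+\epsilon} u^2$, i.e.\ one power lower, and is controlled by $\mathcal{D}_{6+\epsilon}$, while the kinetic term $r^{6+\epsilon-2}(u_t^2+u_r^2)$ is controlled by $\mathcal{D}_{7+\epsilon}$. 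Since $t\mapsto \mathcal{D}_{6+\epsilon}(t)+\mathcal{D}_{7+\epsilon}(t)$ is integrable on $[2,\infty)$ by part (1), there must exist a sequence $t_n\uparrow\infty$ along which $\mathcal{D}_{6+\epsilon}(t_n)+\mathcal{D}_{7+\epsilon}(t_n)\to 0$ (otherwise the integral would diverge), hence $E_{S,r^{6+\epsilon}}[u](t_n)\to 0$; the analogous argument applied to $E_{S,r^{7+\epsilon}}$, which is controlled by $\mathcal{D}_{8+\epsilon}+\mathcal{D}_{7+\epsilon}$, would first require integrability of $\mathcal{D}_{8+\epsilon}$, so instead one uses the same virial \eqref{eq:HS_positividad} at level $n=7+\epsilon$ to get integrability of $\mathcal{D}_{7+\epsilon}$ directly and then picks $s_n\uparrow\infty$ with $\mathcal{D}_{7+\epsilon}(s_n)\to 0$, dominating $E_{S,r^{7+\epsilon}}[u](s_n)$ by $\mathcal{D}_{7+\epsilon}(s_n)+\mathcal{D}_{6+\epsilon}(s_n)$, both of which can be made to vanish along a common subsequence.

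**Main obstacle.** The delicate step is the bookkeeping of weights: one must keep careful track of the fact that the potential terms in $E_{S,r^n}$ effectively sit one power of $r$ below the kinetic terms, which is exactly why the corollary requires control of \emph{two} consecutive weighted energies $r^{6+\epsilon}$ and $r^{7+\epsilon}$ rather than one, and why the error term $H_e(t)$ in Corollary \ref{lem:Hs_psi} with its bound $|H_e(t)|\le\delta^2 E_{S,r^{n-1}}(t)$ can be absorbed — the $\delta^2$ smallness is what lets the $E_{S,r^{n-1}}$ error be swallowed by the positive principal part (which, after the $r^{n-1}$-weighting, also controls $E_{S,r^{n-1}}$), at the cost of replacing $\tfrac12$ by $\tfrac14$ in passing from Corollary \ref{lem:Hs_psi} to \eqref{eq:HS_positividad}. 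Making this absorption rigorous — i.e.\ checking that $\delta^2 E_{S,r^{n-1}}(t) \le \tfrac14 \cdot(\text{principal coercive part at level }n)$, which needs $(n-6)(n-1)$, $n+2$, etc., bounded below and $\delta$ chosen small depending on $\epsilon$ and on these constants — is the only genuinely non-routine verification, and it has essentially been carried out already in the statement of \eqref{eq:HS_positividad}, so here it reduces to invoking that inequality and performing the time integration.
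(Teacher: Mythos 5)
Your overall strategy --- integrate the coercive virial inequality \eqref{eq:HS_positividad} in time for part (1), then dominate the weighted energy by the coercive quadratic form via the Taylor expansions \eqref{eq:taylor_S} and extract a vanishing sequence for part (2) --- is exactly the paper's route, and your remark that letting $T\to\infty$ in the integrated inequality requires a \emph{uniform-in-$t$} bound on $\Hs$ (hence on the weighted energies, which Proposition \ref{prop:time_rate_skyrme} alone does not give) is a correct and worthwhile observation. However, your weight bookkeeping has an off-by-one error that breaks the final step of both parts. With $\psi=r^{n}$ the right-hand side of \eqref{eq:HS_positividad} carries the weight $r^{n-1}$, so your $\mathcal{D}_{6+\epsilon}$ and $\mathcal{D}_{7+\epsilon}$ carry the weights $r^{5+\epsilon}$ and $r^{6+\epsilon}$; the assertion that ``$r^{6+\epsilon}+r^{7+\epsilon}$ times the bracket is dominated by $\mathcal{D}_{6+\epsilon}+\mathcal{D}_{7+\epsilon}$'' therefore fails for large $r$. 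Likewise in part (2): since $E_{S,\phi}[u](t)=\int_0^\infty\phi(r)\left[\cdots\right]$ (the weight is $\phi$ itself, not $\phi/r^{2}$), one has $E_{S,r^{6+\epsilon}}\lesssim_{\delta}\mathcal{D}_{7+\epsilon}$ term by term --- there is no ``shift'' between kinetic and potential terms --- while $E_{S,r^{7+\epsilon}}\lesssim_{\delta}\mathcal{D}_{8+\epsilon}$ and is \emph{not} dominated by $\mathcal{D}_{7+\epsilon}+\mathcal{D}_{6+\epsilon}$, so your construction of the sequence $s_{n}$ does not close.

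To obtain the corollary exactly as stated one must run \eqref{eq:HS_positividad} at the levels $n=7+\epsilon$ and $n=8+\epsilon$. The first is covered by the hypothesis, since $|\Hs|\lesssim E_{S,r^{7+\epsilon}}[u](t)$ is then bounded; the second requires a uniform bound on $E_{S,r^{8+\epsilon}}[u](t)$, which the class $\mathcal{E}_{0}^{S,r^{6+\epsilon}}\cap\mathcal{E}_{0}^{S,r^{7+\epsilon}}$ does not supply. You have in effect run into an indexing inconsistency that is already latent in the paper, whose proof of part (1) is a one-line appeal to the same time integration: either the hypothesis should be raised by one power of $r$, or the weights in the conclusion lowered by one. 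As written, your chain of dominations establishes integrability with weights $r^{5+\epsilon}+r^{6+\epsilon}$ and sequential decay of $E_{S,r^{5+\epsilon}}$ and $E_{S,r^{6+\epsilon}}$, one power short of the displayed claims; you should either prove that shifted statement or assume the additional weighted-energy bound explicitly.
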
	
\begin{proof}
The proof of the first statement  is obtained directly integrating \eqref{eq:HS_positividad}.\\
The second statement follows from the approximation of the  energy $E_{S,\phi}[u](t)$ in \eqref{energy_S} for $\|u\|_{L^{\infty}}$ small enough. Then, using \eqref{eq:taylor_S}, we get
\[
\begin{aligned}
	E_{S,\phi}[u](t)
	\lesssim &
	\int_{0}^{\infty} \phi(r) \left[ u_t^2+u_r^2+ 2 \alpha^2 \dfrac{u^2}{r^2} (u_t^2+u^2_r)+ 2\dfrac{u^2}{r^2}
	+\alpha^2 \dfrac{ u^4}{r^4} \right]\\
	&+ 
	\|u\|^2_{L^\infty}\int_{0}^{\infty} \phi(r) \left[  2 \alpha^2 \dfrac{u^2}{r^2} (u_t^2+u^2_r)+ 2\dfrac{u^2}{r^2}
	+\alpha^2 \dfrac{ u^2}{r^4} \right]\\
		\lesssim_{\delta} &
	\int_{0}^{\infty} \phi(r) \left[ u_t^2+u_r^2+ 2 \alpha^2 \dfrac{u^2}{r^2} (u_t^2+u^2_r)+ 2\dfrac{u^2}{r^2}
	+\alpha^2 \dfrac{ u^4}{r^4} \right].
	\end{aligned}
	\]
	We conclude replacing $\phi=r^{6+\epsilon}$ and using the first statement.
\end{proof}
 	
	\subsubsection{Decay to zero for modified Energy: Proof of the Theorem \ref{thm:S_interior} for the Adkins-Nappi model}
	
Similarly to the Skyrme equation, we will need the following technical lemmas.
 
\begin{lem}\label{lem:HAN_psi}
Let $u$ be a global solution of \eqref{AdkinsNappi} such that $u\in \mathcal{E}_{0}^{AN,\psi}$, and $\psi=r \phi$ 
(where $\psi$ and $\phi$ are the weight functions presented in \eqref{momentum} and \eqref{R}).
Then, the functional $\Han$, defined  in  \eqref{eq:Han}, satisfies the following identity
				\begin{equation}\label{eq:Han'}
		\begin{aligned}
		\dfrac{d}{dt}&\Han
				=\\
				&-\frac12 \int_{0}^{\infty} \left(\psi' -2\gamma_{AN}\frac{\psi}{r} \right)  u_t^2  
				- \frac12 \int_{0}^{\infty}  \left( \psi' -2(\gamma_{AN}-2)\dfrac{\psi }{r} \right)u_r^2\\
				&-\frac12 \int_{0}^{\infty}  \left[ 
								 2(1-\gamma_{AN} )\frac{\psi}{r} - \psi'+ \gamma_{AN} r\psi''
							\right]	\frac{u^2}{r^2}
				+\frac13 \int_{0}^{\infty} \left[ (4\gamma_{AN}+1)  \frac{\psi}{r} -\frac12 \psi'
								\right] \frac{u^4}{r^2} \\
				&-\frac1{45}\int_{0}^{\infty} \left[
								 2(1+6\gamma_{AN})\frac{\psi}{r} -\psi'
							 \right]\left(\frac{u^6}{r^2}+\frac{O(u^8)}{r^2}\right)\\
				&-\frac19 \int_{0}^{\infty} \left[
					4(2+3\gamma_{AN}) \frac{\psi}{r} -2 \psi' 
							 \right]\left(\frac{u^6}{r^4}+\frac{O(u^8)}{r^4}\right).
		\end{aligned}
		\end{equation}
		\end{lem}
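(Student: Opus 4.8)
The plan is to expand the combined virial $\Han = \Man + \gamma_{AN}\Ran$ directly from the two identities already at our disposal, namely \eqref{eq:dt_M} for $\frac{d}{dt}\Man$ and \eqref{eq:dt_R} for $\frac{d}{dt}\Ran$, and then to pass from the pair of weights $(\psi,\phi)$ to the single weight $\psi$ through the relation $\psi = r\phi$ assumed in the statement. Concretely, I would first write
\[
\frac{d}{dt}\Han = \frac{d}{dt}\Man + \gamma_{AN}\,\frac{d}{dt}\Ran ,
\]
and then substitute $\phi = \psi/r$ everywhere, using
\[
\phi' = \frac{\psi'}{r} - \frac{\psi}{r^2}, \qquad \phi'' = \frac{\psi''}{r} - \frac{2\psi'}{r^2} + \frac{2\psi}{r^3},
\]
so that each coefficient occurring in \eqref{eq:dt_M} and \eqref{eq:dt_R}---in particular $\tfrac{\psi'}{2}-\tfrac{2\psi}{r}$, $2\tfrac{\psi}{r}-\psi'$, $4\tfrac{\psi}{r}-\psi'$, $\phi' r-\phi-\tfrac12 r^2\phi_{rr}$, $\tfrac{\phi}{r^2}$ and $\tfrac{\phi}{r^4}$---becomes an explicit expression in $\psi,\psi',\psi''$ and powers of $r$. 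After this step the right-hand side is an exact identity whose only non-polynomial ingredients are the quadratic forms $u_t^2,u_r^2$ and the trigonometric densities $\sin^2 u/r^2$, $u\sin(2u)/r^2$, $(u-\sin u\cos u)^2/r^4$ and $u(u-\sin u\cos u)(1-\cos 2u)/r^4$.

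Next I would use the a priori bound $\|u\|_{L^\infty_{t,x}}\le\delta$ from Remark \ref{rem:energy_Linfty} to Taylor-expand these densities about $u=0$: for $\sin^2 u$ and $u\sin(2u)$ one uses the expansions already recorded in \eqref{eq:taylor_S}, and in addition, since $u-\sin u\cos u = u - \tfrac12\sin(2u) = \tfrac23 u^3 - \tfrac{2}{15}u^5 + O(u^7)$ and $1-\cos(2u)=2\sin^2 u$, one gets $(u-\sin u\cos u)^2 = \tfrac49 u^6 + O(u^8)$ and $u(u-\sin u\cos u)(1-\cos 2u) = \tfrac43 u^6 + O(u^8)$. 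In particular the two densities carrying a $1/r^4$ factor only begin to contribute at order $u^6$, which explains why \eqref{eq:Han'} has a $u^6/r^4$ term but no $u^4/r^4$ term, and provides a useful consistency check on the whole computation.

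Finally I would gather terms of equal order---$u_t^2$ and $u_r^2$ at the quadratic level, then $u^2/r^2$, $u^4/r^2$, $u^6/r^2$ and $u^6/r^4$, the $O(u^8)$ tails being absorbed uniformly into the displayed $O(u^8)$ contributions by smallness of $\delta$---and regroup each coefficient over $\psi$, $\psi'$, $\psi''$; this yields exactly \eqref{eq:Han'}. One structural point is worth emphasizing: unlike the Skyrme computation in Lemma \ref{lem:HS_psi}, the Adkins-Nappi nonlinearity in \eqref{AdkinsNappi} contains no first-order ($u_t^2$, $u_r^2$) terms, so the identities \eqref{eq:dt_M} and \eqref{eq:dt_R} leave no surviving $u_r$-cross term; consequently there is no analogue here of the extra integration by parts performed on the $H_5$ term in the Skyrme case, and the argument is purely algebraic. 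The main obstacle is therefore just the bookkeeping---correctly propagating the many weight combinations through the substitution $\phi=\psi/r$ and collecting each power of $u$---rather than any conceptual difficulty.
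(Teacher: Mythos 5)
Your proposal is correct and follows essentially the same route as the paper: sum the identities \eqref{eq:dt_M} and \eqref{eq:dt_R}, Taylor-expand the trigonometric densities using the smallness of $\|u\|_{L^\infty}$ (your expansions of $(u-\sin u\cos u)^2$ and $u(u-\sin u\cos u)(1-\cos 2u)$ match the paper's), and rewrite everything in terms of $\psi$ via $\phi=\psi/r$. The only cosmetic difference is the order of the substitution and the expansion, which does not affect the computation.
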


\begin{proof}
First, we note that  $\Man$ and $\Ran$ are well defined in $\mathcal{E}_{0}^{AN,\psi}$. Collecting \eqref{eq:dt_M} and \eqref{eq:dt_R}, we get that $\Han$ is given by
		{\small
		\[
		\begin{aligned}
		\dfrac{d}{dt}\Han 
		=&
			-\frac12 \int_{0}^{\infty} (\psi'-2\gamma_{AN}\phi)  u_t^2 
			- \int_{0}^{\infty}  \left(\frac{\psi'}{2} -\dfrac{2\psi }{r}\right)u_r^2-\gamma_{AN}\int_{0}^{\infty} \phi u_r^2\\
			&-\frac12 \int_{0}^{\infty} \left( 2 \frac{\psi}{r} -\psi' \right)\frac{\sin^2(u) }{r^2} 
			-\frac12\int_{0}^{\infty} \left(  4 \frac{\psi}{r} -\psi' \right) \frac{( u- \sin (u) \cos (u))^2}{r^4}\\
			&-\gamma_{AN}\int_{0}^{\infty}  \left[  \phi' r-\phi  -\frac{r^2\phi_{rr}}{2}\right]\frac{u^2}{r^2}
			-\gamma_{AN}\int_{0}^{\infty}  \left(\dfrac{\phi}{r^2}\right) u \sin(2u) \\
			&-\gamma_{AN} \int_{0}^{\infty} \left(\dfrac{\phi}{r^4}\right) u \left( u- \sin (u) \cos (u) \right) (1-\cos (2u)).
\end{aligned}
\]}
Now, let $\delta>0$ small enough and using the Taylor approximation for $\|u\|_{L^{\infty}}<\delta$, we have 
		\[
		\begin{gathered}
		u \sin(2u)=2u^2-\frac43 u^4+\frac{4}{15}u^6+O(u^8),\\
		\sin^2(u)= u^2-\frac13 u^4+\frac{2}{45}u^6+O(u^8),\\
		(u-\sin u\cos u)^2= \frac49 u^6-\frac{8}{45} u^8+O(u^{10}),\\
		u(u- \sin(u)\cos(u))(1-\cos(2u))=\frac43 u^6-\frac{32}{47}u^8+O(u^{10}).
		\end{gathered}
		\]
		Replacing in $\frac{d}{dt}\Han$ and regrouping terms of same order, we get
		\[
		\begin{aligned}
		\dfrac{d}{dt}&\Han
				=\\
				&-\frac12 \int_{0}^{\infty} (\psi'-2\gamma_{AN}\phi)  u_t^2 
				- \int_{0}^{\infty}  \left(\frac{\psi'}{2} -\dfrac{2\psi }{r}+\gamma_{AN} \phi \right)u_r^2\\
				&-\int_{0}^{\infty}  \left[ 
								 \frac{\psi}{r} -\frac12 \psi'+ \gamma_{AN}\left( \phi' r+\phi  
								 	-\frac{r^2\phi_{rr}}{2}\right)
							\right]	\frac{u^2}{r^2}
				-\int_{0}^{\infty} \left[ \frac16 \psi'-\frac13  \frac{\psi}{r} -\frac43 \gamma_{AN} \phi
								\right] \frac{u^4}{r^2} \\
				&-\frac{1}{45}\int_{0}^{\infty} \left[
								 2\frac{\psi}{r} -\psi'+12 \gamma_{AN}\phi
							 \right]\left(\frac{u^6}{r^2}+\frac{O(u^8)}{r^2}\right)\\
				&-\frac2{9}\int_{0}^{\infty} \left[
					4 \frac{\psi}{r} - \psi' +12 \gamma_{AN} \phi
							 \right]\left(\frac{u^6}{r^4}+\frac{O(u^8)}{r^4}\right).
		\end{aligned}
		\]
		Since $\psi=r\phi$, we get
		\[
		\phi'= \frac{\psi'}{r}-\frac{\psi}{r^2}, \quad 
		\phi''= \frac{\psi''}{r}-2\frac{\psi'}{r^2}+2\frac{\psi}{r^3}.
		\]
		 Then, rewriting $\Han$ in terms of $\psi$, we get
				\[
		\begin{aligned}
		\dfrac{d}{dt}&\Han
				=\\
				&-\frac12 \int_{0}^{\infty} \left(\psi'-2\gamma_{AN}\frac{\psi}{r} \right)  u_t^2 
				- \frac12 \int_{0}^{\infty}  \left( \psi' -2(\gamma_{AN}-2)\dfrac{\psi }{r} \right)u_r^2\\
				&-\frac12 \int_{0}^{\infty}  \left[ 
								 2(1-\gamma_{AN} )\frac{\psi}{r} - \psi'+ \gamma_{AN} r\psi''
							\right]	\frac{u^2}{r^2}
				-\frac13 \int_{0}^{\infty} \left[ -(4\gamma_{AN}+1)  \frac{\psi}{r} +\frac12 \psi'
								\right] \frac{u^4}{r^2} \\
				&-\frac1{45}\int_{0}^{\infty} \left[
								 2(1+6\gamma_{AN})\frac{\psi}{r} -\psi'
							 \right]\left(\frac{u^6}{r^2}+\frac{O(u^8)}{r^2}\right)\\
				&-\frac19 \int_{0}^{\infty} \left[
					4(2+3\gamma_{AN}) \frac{\psi}{r} -2 \psi' 
							 \right]\left(\frac{u^6}{r^4}+\frac{O(u^8)}{r^4}\right).
		\end{aligned}
		\]
		This ends the proof.
		\end{proof}
	
	Similarly to Skyrme equation, using  $\psi = r \phi$  and the Cauchy-Schwarz inequality,  we get
	\[
	\begin{aligned}
	|\Man|+|\Ran| \leq E_{AN,\psi}[u] (t).
	\end{aligned}
		\]
		Then, the functionals $\Man$ and $\Ran$ are well-defined if $u\in \mathcal{E}_{0}^{AN,\psi}$.

	\begin{cor}
	Under the hypothesis of Lemma \ref{lem:HAN_psi} and assuming $n ,\gamma_{AN}\in\R$
	 and $\psi=r^n \chi $,  the following holds:
		\[
		\begin{aligned}
		\dfrac{d}{dt}& \Han
				= \\
				&-\frac12 \int_{0}^{\infty} \left[ \left(n-2\gamma_{AN}\right)  r^{n-1}\chi +r^n\chi' \right]u_t^2 
				- \frac12 \int_{0}^{\infty} \left[  \left( n +4-2\gamma_{AN} \right)r^{n-1}\chi+r^n\chi'  \right]u_r^2~{}\\
				&-\frac12 \int_{0}^{\infty}  \left[ (n-2)(1-\gamma_{AN} n-\gamma_{AN})
							 r^{n-1}\chi+(\gamma_{AN}-1)r^n \chi' +\gamma_{AN} r^{n+1}\chi'' \right]\frac{u^2}{r^2}\\
				&-\frac13 \int_{0}^{\infty} \left[\left( -(4\gamma_{AN}+1)   +\frac12 n
								\right)  r^{n-1}\chi +r^n\chi' \right] \frac{u^4}{r^2} \\
				&-\frac1{45}\int_{0}^{\infty} \left[\left(
								 2(1+6\gamma_{AN}) -n
							 \right) r^{n-1}\chi -r^n\chi' \right]  \left(\frac{u^6}{r^2}+\frac{O(u^8)}{r^2}\right)\\
				&-\frac19 \int_{0}^{\infty}\left[ \left(
					4(2+3\gamma_{AN})  -2 n 
							 \right) r^{n-1}\chi-2r^n\chi' \right]  \left(\frac{u^6}{r^4}+\frac{O(u^8)}{r^4}\right).\\
		\end{aligned}
		\]		
	\end{cor}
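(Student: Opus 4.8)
The plan is to proceed exactly as in the Skyrme corollary: the claimed identity follows by substituting the ansatz $\psi = r^n\chi$ directly into the expression \eqref{eq:Han'} of Lemma \ref{lem:HAN_psi} and re-expanding each coefficient in the basis $\{r^{n-1}\chi,\ r^n\chi',\ r^{n+1}\chi''\}$. First I would record the derivatives of the weight: writing $\psi = r^n\chi(r)$,
\[
\psi' = nr^{n-1}\chi + r^n\chi', \qquad
\psi'' = n(n-1)r^{n-2}\chi + 2nr^{n-1}\chi' + r^n\chi'',
\]
so that $\tfrac{\psi}{r} = r^{n-1}\chi$ and $r\psi'' = n(n-1)r^{n-1}\chi + 2nr^n\chi' + r^{n+1}\chi''$. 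These are the only combinations of $\psi$ and its derivatives occurring in \eqref{eq:Han'}.

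Next I would insert these expressions into each of the six brackets appearing in \eqref{eq:Han'} — the coefficients multiplying $u_t^2$, $u_r^2$, $u^2/r^2$, $u^4/r^2$, $u^6/r^2$ and $u^6/r^4$ — and collect separately the parts proportional to $r^{n-1}\chi$, $r^n\chi'$ and $r^{n+1}\chi''$. For example, the $u_t^2$ bracket $\psi' - 2\gamma_{AN}\tfrac{\psi}{r}$ becomes $(n-2\gamma_{AN})r^{n-1}\chi + r^n\chi'$, the $u_r^2$ bracket $\psi' - 2(\gamma_{AN}-2)\tfrac{\psi}{r}$ becomes $(n+4-2\gamma_{AN})r^{n-1}\chi + r^n\chi'$, and the $u^2/r^2$ bracket $2(1-\gamma_{AN})\tfrac{\psi}{r} - \psi' + \gamma_{AN}r\psi''$ produces, after collecting, the polynomial-in-$n$ coefficient of $r^{n-1}\chi$ together with the $r^n\chi'$ and $r^{n+1}\chi''$ contributions. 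The remaining higher-order brackets involve only $\psi$ and $\psi'$, hence reduce immediately to combinations of $r^{n-1}\chi$ and $r^n\chi'$.

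Since Lemma \ref{lem:HAN_psi} already assumes $\psi = r\phi$ with $u\in\mathcal{E}_0^{AN,\psi}$, and since $\psi = r^n\chi$ is of this form (with $\phi = r^{n-1}\chi$), no new hypothesis is needed and the functionals $\Man$, $\Ran$ remain well-defined; thus the passage from Lemma \ref{lem:HAN_psi} to the corollary is purely algebraic. There is no analytic obstacle: the only work is the bookkeeping of matching coefficients. The point requiring most care is the $u^2/r^2$ coefficient, where the term $\gamma_{AN}r\psi''$ contributes the quadratic-in-$n$ pieces, so one must track signs and powers of $n$ there; all other brackets are linear in $n$ and collapse without difficulty.
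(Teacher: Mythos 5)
Your proposal is exactly the paper's proof: the corollary is obtained by substituting $\psi=r^n\chi$ into \eqref{eq:Han'} and collecting the coefficients of $r^{n-1}\chi$, $r^n\chi'$ and $r^{n+1}\chi''$, which is the purely algebraic bookkeeping you describe. (One caveat: if you actually carry out the collection on the $u^2/r^2$ bracket of \eqref{eq:Han'}, the $r^n\chi'$ coefficient comes out as $2n\gamma_{AN}-1$ and the $r^{n-1}\chi$ coefficient as $-(n-2)(1-\gamma_{AN}n-\gamma_{AN})$, which do not match the printed statement --- a sign/typo issue internal to the paper's Lemma \ref{lem:HAN_psi} and its corollary rather than a defect of your method.)
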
 
\begin{proof}
The proof follows directly using \eqref{eq:Han'} and replacing  $\psi=r^n\chi$.
\end{proof}

Now, considering that $\chi=1$, we obtain the following result:
\begin{cor}\label{lem:HAN_rn}
Let $\psi=r^n$ , and  $u$ be a global solution of \eqref{AdkinsNappi} such that $u\in \mathcal{E}_{0}^{AN,r^n}$. Then, for $\gamma_{AN}=(n-2)/8$ and $n\geq 2$,
the functional $\mathcal{H}_{AN}$,  defined in \eqref{eq:Han}, satisfies the following identity
				\begin{equation*}\label{eq:Han_identity}
				\begin{aligned}
		\dfrac{d}{dt} \Han\leq &
				-\frac12 \int_{0}^{\infty} 
									r^{n-1} \left( \frac{3n+2}{4} u_t^2 
				+   \frac{ 3(6+n)}{4}   u_r^2
				+ \frac{(n-2)(n^2-n-10)}{8}  \frac{u^2}{r^2}\right.\\
				&\left.  \quad \quad  \quad \quad \quad \quad \quad +\frac{n-2}{45}  \left(\frac{u^6}{r^2}+\frac{O(u^8)}{r^2}\right)
				+\frac{( 10-n )}{9}  \left(\frac{u^6}{r^4}+\frac{O(u^8)}{r^4}\right)\right).
				\end{aligned}
				\end{equation*}
\end{cor}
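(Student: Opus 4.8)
The plan is to read off Corollary~\ref{lem:HAN_rn} directly from Lemma~\ref{lem:HAN_psi}, that is, from the identity \eqref{eq:Han'}, by performing the two substitutions $\psi=r^{n}$ and $\gamma_{AN}=(n-2)/8$. First I would set $\psi=r^{n}$ in \eqref{eq:Han'}, so that $\psi/r=r^{n-1}$, $\psi'=n r^{n-1}$ and $r\psi''=n(n-1)r^{n-1}$; every square bracket in \eqref{eq:Han'} then collapses to $r^{n-1}$ times a polynomial in $n$ and $\gamma_{AN}$, leaving an exact identity
\[
\frac{d}{dt}\Han=-\frac12\int_{0}^{\infty} r^{n-1}\Bigl(a_{1} u_t^{2}+a_{2} u_r^{2}+a_{3}\,\frac{u^{2}}{r^{2}}+a_{4}\,\frac{u^{4}}{r^{2}}+a_{5}\,\frac{u^{6}+O(u^{8})}{r^{2}}+a_{6}\,\frac{u^{6}+O(u^{8})}{r^{4}}\Bigr),
\]
where $a_{1},\dots,a_{6}$ are explicit polynomials in $n$ and $\gamma_{AN}$.

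The decisive step is the choice of $\gamma_{AN}$. Inspecting \eqref{eq:Han'}, the coefficient multiplying $u^{4}/r^{2}$ is proportional to $4\gamma_{AN}+1-\tfrac n2$, which vanishes exactly at $\gamma_{AN}=(n-2)/8$; this is why that value is selected, namely it removes the sign-indefinite quartic term. I would then substitute $\gamma_{AN}=(n-2)/8$ back into the surviving coefficients and simplify. One finds at once $a_{1}=n-2\gamma_{AN}=\tfrac{3n+2}{4}$ and $a_{2}=n+4-2\gamma_{AN}=\tfrac{3(n+6)}{4}$; for the potential coefficient one computes $2-n+\gamma_{AN}(n-2)(n+1)=\tfrac18(n-2)(n^{2}-n-10)$, so $a_{3}=\tfrac{(n-2)(n^{2}-n-10)}{8}$; and the two sextic brackets $2+12\gamma_{AN}-n$ and $8+12\gamma_{AN}-2n$ collapse to $\tfrac{n-2}{2}$ and $\tfrac{10-n}{2}$, which after pulling out the global $-\tfrac12$ become $a_{5}=\tfrac{n-2}{45}$ and $a_{6}=\tfrac{10-n}{9}$. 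Collecting the surviving terms reproduces exactly the displayed expression. Since the identity is sharp (the $a_{4}$ term having vanished), the inequality $\le$ in the statement holds a fortiori; if one prefers to discard the sign-indefinite $O(u^{8})$ remainders, these are absorbed into the corresponding $u^{6}$ terms once $\|u\|_{L^{\infty}}$ is small, by Remark~\ref{rem:energy_Linfty}.

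I do not expect a serious analytic obstacle at this stage: the substantial work, namely the virial computations for $\Man$ and $\Ran$ and their combination into \eqref{eq:Han'}, is already done, so what remains is careful bookkeeping of the six coefficients. The one point deserving attention is that the single value $\gamma_{AN}=(n-2)/8$ must do two things simultaneously, annihilate $a_{4}$ and keep $a_{1},a_{2}>0$ for every $n\ge 2$; both follow from the elementary identities above. Finally, I would record that the factors $n^{2}-n-10$ appearing in $a_{3}$ and $10-n$ appearing in $a_{6}$ are precisely the ingredients that will enter the choice of admissible exponents $n\in[\tfrac{3+\sqrt{41}}{2},10]$ in the subsequent proof of Theorem~\ref{thm:AN_interior}.
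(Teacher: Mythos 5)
Your proposal is correct and follows essentially the same route as the paper: the paper obtains the corollary by substituting $\psi=r^n\chi$ into the identity \eqref{eq:Han'} of Lemma \ref{lem:HAN_psi} and then setting $\chi=1$ and $\gamma_{AN}=(n-2)/8$, which is exactly your computation (you merely skip the intermediate $\chi$ step). All six coefficient evaluations check out, including the key observation that $\gamma_{AN}=(n-2)/8$ annihilates the $u^4/r^2$ term, and your remark on absorbing the $O(u^8)$ remainders for small $\|u\|_{L^\infty}$ correctly accounts for the $\leq$ in the statement.
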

\medskip

For $n\in \left[\frac{1+\sqrt{41}}{2},10\right]$, by Corollary \ref{lem:HAN_rn}, we obtain the following inequality\medskip
	\begin{equation}\label{eq:Han_positividad}
				\begin{aligned}
		-\dfrac{d}{dt} \Han\geq &~{}
				\frac14 \int_{0}^{\infty} 
									r^{n-1} \left( \frac{3n+2}{4} u_t^2 
				+   \frac{ 3(6+n)}{4}   u_r^2
				+ \frac{(n-2)(n^2-n-10)}{8}  \frac{u^2}{r^2}\right.\\
				&\left.  \quad \quad  \quad \quad \quad \quad \quad +\frac{n-2}{45}  \frac{u^6}{r^2}
				+\frac{( 10-n )}{9}  \frac{u^6}{r^4}\right) 
				\geq 0,
				\end{aligned}
				\end{equation} 
				\medskip
which is essential to obtain the integrability property. In particular, we obtain the following result for the  $r^{4+\epsilon}$ and $r^{5+\epsilon}$ weighted energies.

\begin{cor}\label{cor:AN_int_local}
	Let $u$ be a global solution of \eqref{AdkinsNappi} in the class $\mathcal{E}_{0}^{AN,r^{4+\epsilon}}\ \bigcap\ \mathcal{E}_{0}^{AN,r^{5+\epsilon}}$ for $\epsilon\in[0,4[$. 
	Then, 
	\begin{enumerate}
		\item Integrability in time:
		\begin{equation*}\label{eqn:AN_seq1}
		\begin{aligned}
		\int_{2}^{\infty} \int_{0}^{\infty}   
								(r^{4+\epsilon} +r^{5+\epsilon})\left(   
								(u_t^2+u_r^2) + \frac{u^2}{r^2}+\frac{u^6}{r^4}
									 \right)
		drdt \lesssim_{u_0} 1.
		\end{aligned}
		\end{equation*}
		\item Sequential decay to zero: there exists $s_n, \ t_n\uparrow \infty$ such that
		\begin{equation*}\label{sucesion}
		\lim_{n\to \infty} E_{AN,r^{5+\epsilon}}[u](t_n) =0\  \mbox{ and }\ \lim_{n\to \infty} E_{AN,r^{4+\epsilon}}[u](s_n) =0.
		\end{equation*}
	\end{enumerate}
\end{cor}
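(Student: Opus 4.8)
The plan is to follow, almost line for line, the proof of Corollary~\ref{cor:Sint_local} for the Skyrme model: the whole substance is already packaged in the pointwise virial inequality \eqref{eq:Han_positividad}, so only an integration in time and a Taylor comparison of energies remain. For the first statement, fix $\epsilon\in[0,4[$ and apply \eqref{eq:Han_positividad} (that is, Corollary~\ref{lem:HAN_rn} with $\psi=r^{n}$ and $\gamma_{AN}=(n-2)/8$) for the two values of $n$ for which the weight $r^{n-1}$ occurring there is exactly $r^{4+\epsilon}$ and $r^{5+\epsilon}$; both such $n$ lie in the admissible interval $[\frac{1+\sqrt{41}}{2},10]$ precisely because $\epsilon<4$, and, for each fixed such $n$, each of the coefficients $\frac{3n+2}{4}$, $\frac{3(6+n)}{4}$, $\frac{(n-2)(n^2-n-10)}{8}$, $\frac{n-2}{45}$, $\frac{10-n}{9}$ is bounded below by a positive constant depending only on $\epsilon$. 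Integrating \eqref{eq:Han_positividad} over $t\in[2,T]$, the left-hand side telescopes to $\mathcal{H}_{AN}(2)-\mathcal{H}_{AN}(T)$, while Cauchy--Schwarz together with $\psi=r\phi$ gives $|\mathcal{H}_{AN}(t)|\le|\Man|+\gamma_{AN}|\Ran|\lesssim E_{AN,r^{n}}[u](t)$, which remains controlled by the hypothesis that $u$ stays in the corresponding weighted class. Hence $\mathcal{H}_{AN}(2)-\mathcal{H}_{AN}(T)\lesssim_{u_0}1$ uniformly in $T$; discarding the nonnegative $u^{6}/r^{2}$ term, letting $T\to\infty$, and adding the two contributions yields
\[
\int_{2}^{\infty}\!\!\int_{0}^{\infty}(r^{4+\epsilon}+r^{5+\epsilon})\left((u_t^2+u_r^2)+\frac{u^2}{r^2}+\frac{u^6}{r^4}\right)dr\,dt\lesssim_{u_0}1 .
\]

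For the second statement, the bound just obtained says that, for each $m\in\{4+\epsilon,\,5+\epsilon\}$, the nonnegative function $t\mapsto\int_0^\infty r^{m}\left((u_t^2+u_r^2)+u^2/r^2+u^6/r^4\right)dr$ belongs to $L^1([2,\infty))$, so its $\liminf$ as $t\to\infty$ vanishes and one extracts $s_n\uparrow\infty$ (with $m=4+\epsilon$) and $t_n\uparrow\infty$ (with $m=5+\epsilon$) along which the corresponding integral tends to $0$. To convert this into decay of the weighted energies, I would use the Taylor expansions from the proof of Lemma~\ref{lem:HAN_psi}: since $\|u\|_{L^\infty}$ is small (Remark~\ref{rem:energy_Linfty}), $2\sin^2(u)/r^2\lesssim u^2/r^2$ and $(u-\sin u\cos u)^2/r^4=\frac49 u^6/r^4+O(u^8)/r^4\lesssim u^6/r^4$, so that for any nonnegative weight $\phi$,
\[
E_{AN,\phi}[u](t)\lesssim\int_0^\infty\phi(r)\left((u_t^2+u_r^2)+\frac{u^2}{r^2}+\frac{u^6}{r^4}\right)dr .
\]
Taking $\phi=r^{5+\epsilon}$ along $t_n$ and $\phi=r^{4+\epsilon}$ along $s_n$ then produces $E_{AN,r^{5+\epsilon}}[u](t_n)\to0$ and $E_{AN,r^{4+\epsilon}}[u](s_n)\to0$, as claimed.

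\textbf{Expected main difficulty.} Once \eqref{eq:Han_positividad} is granted there is no deep obstruction; the point that truly needs attention is the uniform-in-$T$ control of the boundary term $\mathcal{H}_{AN}(T)$. Monotonicity of $\mathcal{H}_{AN}$ alone only gives $\mathcal{H}_{AN}(T)\le\mathcal{H}_{AN}(2)$, so one genuinely needs a lower bound, and it is exactly $|\mathcal{H}_{AN}(T)|\lesssim E_{AN,r^{n}}[u](T)$ combined with the hypothesis on the weighted energies that furnishes it. The remaining bookkeeping — verifying that the two exponents $n$ needed to reach the target weights $r^{4+\epsilon}$ and $r^{5+\epsilon}$ both stay inside $[\frac{1+\sqrt{41}}{2},10]$ when $\epsilon\in[0,4[$, and that the Taylor comparison $E_{AN,\phi}[u](t)\lesssim\int_0^\infty\phi(r)\left(u_t^2+u_r^2+u^2/r^2+u^6/r^4\right)dr$ holds for $\|u\|_{L^\infty}$ small — is routine.
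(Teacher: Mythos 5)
Your proposal is correct and follows essentially the same route as the paper, which proves this corollary exactly as its Skyrme analogue (Corollary \ref{cor:Sint_local}): integrate the virial inequality \eqref{eq:Han_positividad} in time for $n=5+\epsilon$ and $n=6+\epsilon$, control the boundary terms via $|\Man|+|\Ran|\lesssim E_{AN,r^{n}}[u](t)$, and pass from $L^1$-in-time integrability to sequential decay through the small-amplitude Taylor comparison of $E_{AN,\phi}$ with the integrand. Your added bookkeeping (checking both exponents lie in $[\tfrac{1+\sqrt{41}}{2},10]$ and the lower bound on $\mathcal{H}_{AN}(T)$) only makes explicit what the paper leaves implicit.
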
		 
	
\medskip

 The proof of above corollary follows directly from \eqref{eq:Han_positividad}. With these results, 
 we are ready to conclude the proof of Theorem \ref{thm:AN_interior} for the Skyrme and Adkins-Nappi equations.

\subsection{End of the proof of Theorem \ref{thm:S_interior}}
Consider  $E_{S,\varphi}$  as in \eqref{w_energy_S} with $\varphi= r^{7+\epsilon}$.  From \eqref{eq:I_wo_r2}, we have
\[
\dfrac{d}{dt}E_{S,\varphi}[u](t)   = -2\int_{0}^{\infty} \left(\varphi' -2\frac{\varphi}{r}\right) \left(1+\frac{2\alpha^2 \sin^2(u)}{r^2}\right) u_tu_r.
\]
Therefore,
\begin{equation*}\label{S_auxiliar}
\left|\dfrac{d}{dt}E_{S,\varphi}[u](t)  \right| \lesssim  \int_{0}^{\infty} \left|\varphi' -2\frac{\varphi}{r}\right|\left(1+\frac{2\alpha^2 \sin^2(u)}{r^2}\right)(u_t^2 +u_r^2 ).
\end{equation*}
Integrating in $[t,t_n]$, we have 
\[
\left| E_{S,\varphi}[u](t)  - E_{S,\varphi}[u](t_n) \right|\lesssim \int_{t}^{t_n} \int_{0}^{\infty}  \left|\varphi' -2\frac{\varphi}{r}\right|\left(1+\frac{2\alpha^2 \sin^2(u)}{r^2}\right)(u_r^2 +u_t^2 )drdt.
\]
Sending $n$ to infinity, we have from \eqref{skyrme_sucesion} that $ E_{S,\varphi}[u](t_n) \to 0$ and
\[
\left| E_{S,\varphi}[u](t) \right|\lesssim \int_{t}^{\infty}  \int_{0}^{\infty} \left|\varphi' -2\frac{\varphi}{r}\right|\left(1+\frac{2\alpha^2 \sin^2(u)}{r^2}\right)(u_r^2 +u_t^2 )drdt.
\]
Finally, if $t\to \infty$, we conclude. Since  $E_{S,\varphi}[u](t) \gtrsim \big\|(r^{\frac{5+\epsilon}{2}}u_t, r^{\frac{5+\epsilon}{2}}u_r)(t)\big\|_{L^2\times L^2(\R^3)}^2$, this proves Theorem \ref{thm:S_interior} for the Skyrme equation.
\medskip
The proof in the Adkins-Nappi case is analogous considering  $E_{AN,\varphi}$ in \eqref{w_energy_AN} with $\varphi= r^{5+\epsilon}$. \\

This concludes the proof of Theorem \ref{thm:AN_interior}.

\bigskip

\end{document}